\NeedsTeXFormat{LaTeX2e}
[1994/06/01]


\documentclass{amsart}
\usepackage{amsthm}
\usepackage{amsmath,amsxtra,amssymb,latexsym, amscd,amsthm}
\usepackage[all]{xy}
\usepackage{mathrsfs}
\usepackage{enumerate}
\usepackage{color}
\usepackage{multicol}
\newtheorem{thm}{Theorem}[section]
\newtheorem{cor}[thm]{Corollary}
\newtheorem{prop}[thm]{Proposition}
\newtheorem{lem}[thm]{Lemma}
\theoremstyle{definition}
\newtheorem{defn}{Definition}[section]
\newtheorem{exam}{Example}[section]

\textwidth=13.7cm
 \theoremstyle{remark}
\newtheorem{rmk}{Remark}[section]
\newtheorem{\theequation}{}[section]

\renewcommand{\theequation}{\thesection.\arabic{equation}}
\usepackage{amsfonts}

\newcommand{\C}{\field{C}}
\newcommand{\R}{\field{R}}

\def\2ovlOB{\overline {\overline \Omega}_B}
\def\1ovlO{\overline \Omega}

\def\R{{\Bbb R}}
\def\BB{{\Bbb B}}
\def\C{{\Bbb C}}
\def\D{{\Bbb D}}

\def\B{{\mathcal B}}

\def\N{{\Bbb N}}

\makeatletter
\@namedef{subjclassname@2020}{%
	\textup{2020} Mathematics Subject Classification}
\makeatother
\begin{document}

\title[WEIGHTED COMPOSITION OPERATORS ON  WEAK HOLOMORPHIC SPACES...]{WEIGHTED COMPOSITION OPERATORS ON  WEAK HOLOMORPHIC SPACES AND APPLICATION TO WEAK BLOCH-TYPE SPACES ON THE UNIT BALL OF A HILBERT SPACE}

\author{Thai Thuan Quang} 
\address{Department of Mathematics and Statistics, 
Quy Nhon University,
170 An Duong Vuong, Quy Nhon, Binh Dinh, Vietnam.}
\email{thaithuanquang@qnu.edu.vn} 


%
%
\date{June 30, 2021.}
\keywords{Operators on Hilbert spaces, Weighted composition operator, Unit ball, Bloch spaces, Boundedness, 	Compactness}

\subjclass[2020]{Primary 47B38, Secondary 30H30, 47B02, 47B33, 47B91}
\maketitle

\bigskip
\begin{abstract} Let $ E $ be a space of  holomorphic functions on the unit ball $ B_X $ of a Banach space $ X.$ In this work, we introduce a Banach structure associated to $ E $ on the linear space  $ WE(Y) $ containing $ Y$-valued holomorphic functions on $ B_X $ such that $ w \circ f \in E $ for every $ w \in W, $ a separating subspace of the dual $ Y' $ of a Banach $ Y. $ We  establish the relation between the boundedness, the (weak) compactness of the weighted composition operators $ W_{\psi,\varphi}: f\mapsto \psi\cdot(f \circ \varphi) $ on   $ E $ and $ \widetilde{W}_{\psi,\varphi}: g\mapsto \psi\cdot(g \circ \varphi) $ on  $ WE(Y)$ via some characterizations of the separating subspace $ W. $ As an application,  via the estimates for the restrictions of   $ \psi $ and $ \varphi $ to  a $ m$-dimensional subspace of $ X $   for some $ m\ge2, $ we characterize the 
	properties mentioned above of  $ W_{\psi,\varphi} $ on  Bloch-type spaces $ \mathcal B_\mu(B_X) $ of   holomorphic functions   on the unit ball $ B_X $ of an infinite-dimensional Hilbert space  as well as  their the associated spaces $W\mathcal B_\mu(B_X,Y), $ where $ \mu $ is a   normal weight on $ B_X. $ 
\end{abstract}

\bigskip
\section{Introduction}
Let $ X, Y $ be complex Banach spaces and $ W\subset Y' $ be a separating subspace of the dual $ Y' $ of $ Y.$ We consider Banach spaces   $ E_1, E_2$    of holomorphic functions  on the unit ball $ B_X $ of   $ X$ and $ W$-associated  spaces $ WE_1(Y), $ $ WE_2(Y) $ of $ Y$-valued holomorphic functions on $ B_X $ 
in the following sense:  
\[ WE_i(Y) := \{f: B_X \to Y:\  f \ \text{is locally bounded and}\ w\circ f \in E_i, \ \forall w \in W\}
\] 
equipped with the norm
\[ \|f\|_{WE_i(Y)} := \sup_{w\in W,\|w\|
	\le 1}\|w\circ f\|_{E_i}, \quad i =1,2.  \]

 For a holomorphic self-map $ \varphi $ of $ B_X $ and a holomorphic function $ \psi $ on $ B_X, $ the weighted composition operators $ W_{\psi,\varphi}: E_1\to E_2 $ and $ \widetilde W_{\psi,\varphi}:  WE_1(Y)\to WE_2(Y) $ are defined by
 \[ \begin{aligned}
  W_{\psi,\varphi}(f) &:= \psi\cdot(f \circ \varphi) \quad  \forall f \in E,\\
  \widetilde{W}_{\psi,\varphi}(g) &:= \psi \cdot (g \circ \varphi) \quad \forall g \in WE_1(Y). 
 \end{aligned} \]
  When the function $ \psi $ is identically $ 1, $ the
operators   reduce to the composition operators $ C_\varphi $ and $ \widetilde C_\varphi. $  A main problem in the investigation
of such operators is to relate function theoretic properties of $ \psi $ and $ \varphi $ to operator theoretic
properties of $ W_{\psi,\varphi}$ and $ \widetilde W_{\psi,\varphi}. $

The problem of studying of weighted composition operators on various Banach spaces of holomorphic functions
on the unit disk or the unit ball (in finite and infinite dimensional spaces), such as Hardy and Bergman spaces, the space $ H^\infty $  of all
bounded holomorphic functions, the disk algebra and weighted Banach spaces with sup-norm, etc. received a special attention of many authors during the past several decades. They  appeared in some works   with different applications. There is a great number of topics on operators of such a type: boundedness and compactness, compact differences, topological structure, dynamical and ergodic properties.

The study of weighted  composition operators between  vector-valued function spaces involves some important basic principles which hold for large classes of function spaces.

In this paper we are interested in the dependence of  the boundedness and the (weak) compactness of $ \widetilde{W}_{\psi,\varphi} $ on the respective properties of 
$W_{\psi, \varphi} $ as well as their restrictions of $W_{\psi, \varphi} $  to some finite-dimensional ones. A positive answer to this problem   is very important because 
in certain situations, the study the above properties of $ \widetilde{W}_{\psi, \varphi}$  can be reduced to studying   $ W_{\psi, \varphi},$ or even, its  restriction  to  the unit ball $ \BB_m \subset\C^m$ for some $ m \in \N. $





First, in Section 2, we introduce some fundamental properties of the    Banach  space of Banach-valued holomorphic functions $ W$-associated to $ E. $ The important result in this section is the linearization theorem for   spaces $ WE(Y)$ (Theorem \ref{thm_1.1}) that allows us to   identify $ WE(Y) $ with the space of $ Y $-valued continuous linear operators on the predual space $ ^*\!E $ of $ E. $ This is usefull in our investigation on the boundedness and the (weak) compactness of operators $ \widetilde{W}_{\psi, \varphi}$ in the next section.

The main theorem in Section 3 is the answer of the  problem posed above. We introduce some assumptions  on the subspace $ W $ of the dual $ Y' $ of $ Y $ to give  necessary as well as  sufficient conditions for the boundednees, the (weak) compactness of   $ \widetilde{W}_{\psi, \varphi} $ via the respective properties of 
 $W_{\psi, \varphi} $ (Theorem \ref{thm_WO}).

The remain part of the paper contains some applications of the above results to the Bloch-type space of Banach-valued holomorphic functions on the unit ball of an infinite-dimensional  Hilbert space.


The notion of  classical Bloch space $ \mathcal B $ of holomorphic functions on  the unit disk $ \BB_1 $ of $ \C $ was extended by R. M. Timoney \cite{Ti1, Ti2} by considering bounded homogeneous domains in $ \C^n, $ for example, the unit ball $ \BB_n $ and the polydisk $ \D_n.  $
Recently, O. Blasco, P. Galindo, A. Miralles \cite{BGM} introduce and investigate Bloch functions on the unit ball $ B_X $ of an infinite-dimensional Hilbert space. It is well known that several results and characterizations of Bloch space on $ \BB_n $ can be extended and still hold in this infinite dimensional setting.

Modifying the well-known definitions and results in \cite{BGM}, in Section 4 we introduce Bloch-type space $ \mathcal B_\mu(B_X, Y) $ of $ Y$-valued holomorphic functions on     the unit ball $ B_X $ of an infinite-dimensional Hilbert space where $ \mu $ is a radial, normal weight on $ B_X. $ It is shown that if the restrictions of a function on $ B_X $ to  finite-dimensional subspaces have their Bloch norms  uniformly bounded then this function belongs to $ \mathcal B(B_X) $  and conversely.   
 Motivated by this fact, we introduce the   notions of gradient norm, radial derivate norm, affine norm on  $ \mathcal B_\mu(B_X, Y)  $  and prove the equivalence between them. Another equivalent norm   for $ \mathcal B(B_X, Y)  $ (in the case $ \mu(z) = 1-\|z\|^2$) which is invariant-modulo the constant functions under the action of the automorphism of the $ B_X $ is also presented in this section.

 In Section 5,  via the estimates for the restrictions of the functions $ \psi $ and $ \varphi $ to $ \BB_m $ for some $ m\ge2, $  we characterize the boundedness and the  compactness  of the operators  $W_{\psi, \varphi}$  between the (little) Bloch-type spaces    $ \mathcal B_\mu(B_X), $ $ \mathcal B_{\mu,0}(B_X)$ as well as   the equivalent relationships between them (Theorems \ref{thm6_1} and \ref{thm_compact}). It should be noted that a necessary condition (but not sufficient) and a sufficient condition (but not necessary) for the compactness of $ W_{\psi, \varphi} $ are also obtained after any necessary minor modifications  for the holomorphic self-map $ \varphi $ (Remark \ref{rmk_6.1} and Theorem \ref{thm_6.4}).  Finally, we finish the paper  with the presentation   the necessary as well as sufficient conditions for the boundedness and the (weak) compactness of $ \widetilde{W}_{\psi,\varphi}$ which are immediate consequences of Theorem \ref{thm_WO}.
 
  Throughout this paper, we use the notions $X \lesssim Y$ and $X \asymp Y$ for non negative quantities $X$ and $Y$ to mean $X \le CY$ and, respectively, $Y/C \le X \le CY$ for some inessential constant $C >0.$

 \section{Weak holomorphic spaces and Linearization Theorem}
Let  $X, Y$ be  complex     Banach spaces. Denote by $ B_X $ the closed unit ball  of $ X $ (we write $ \BB_n $ instead of $ B_{\C^n} $). 

 By $H(B_X, Y)$ we denote the vector  space of $Y$-valued holomorphic functions on $B_X.$ A holomorphic function $f \in H(B_X, Y)$ is called locally bounded holomorphic on $B_X$ if for every $z \in B_X$ there exists a neighbourhood $U_z$ of $0\in X$ such that $f(U_z)$ is bounded. Put
$$H_{LB}(B_X, Y) = \big\{f \in H(B_X, Y): \quad f \; \text{is locally bounded on}\; B_X\big\}.$$

 Suppose that $ E $ is a Banach space of holomorphic functions $ B_X \to \C $ such that
\begin{enumerate}
	\item [(e1)] $ E $ contains the constant functions,
	\item [(e2)] the closed unit ball $ B_E $ is compact in the compact open topology $ \tau_{co} $
	of $ B_X.$
\end{enumerate}
It is easy to check that the properties (e1), (e2) are satisfied by a large number of well-known function
spaces, such as  classical  Hardy, Bergman, BMOA, and Bloch spaces.

Let $ W\subset Y' $ be a separating subspace of the dual $ Y' $ of $ Y.$ We say that the space    
	\[ WE(Y) := \{f: B_X \to Y:\  f \ \text{is locally bounded and}\ w\circ f \in E, \ \forall w \in W\}
\] 
equipped with the norm
\[ \|f\|_{WE(Y)} := \sup_{w\in W,\|w\|
	\le 1}\|w\circ f\|_E.  \]
  is the  Banach space $ W$-associated to $ E $ of $ Y$-valued functions.
%
\begin{prop}\label{Prop2.1} Let $X, Y$ be   complex  Banach spaces and $W \subset Y'$ be a separating subspace. Let $ E $ be a Banach space of holomorphic functions $ B_X \to \C $ satisfying (e1)-(e2) and $ WE(Y) $ be the   Banach space $ W$-associated to $ E. $ Then, the following  assertions  hold:
\begin{enumerate}
	\item [\rm (we1)] $ f\mapsto f \otimes y $ defines a bounded linear operator $ P_y: E \to WE(Y) $ for any $ y \in Y, $ where $ (f\otimes y)(z) = f(z)y $ for $ z \in B_X, $
	\item [\rm (we2)] $ g \mapsto w \circ g $ defines a bounded linear operator $ Q_{w}: WE(Y) \to E $ for any $ w \in W, $
	\item [\rm (we3)] For all $ z \in B_X $ the point evaluations $ \widetilde\delta_z: WE(Y) \to (Y,\sigma(Y,W)), $ where $ \widetilde\delta_z(g) = g(z), $   are continuous.
\end{enumerate}
In the case the hypothesis ``separating'' of $ W $ is replaced by a stronger one that $ W $ is ``almost norming'', we obtain the  assertion (we3') below instead of (we3):
\begin{enumerate}
	\item [\rm (we3')] For all $ z \in B_X $ the point evaluations $ \widetilde\delta_z: WE(Y) \to Y$     are bounded.
\end{enumerate}
\end{prop}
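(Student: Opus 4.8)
My plan is to extract one structural fact and then read off all four assertions from it almost mechanically. The fact is that for each $z \in B_X$ the scalar point evaluation $\delta_z \colon E \to \C$, $\delta_z(h) = h(z)$, is a \emph{bounded} linear functional on $E$. To establish this I would use (e2): the evaluation $h \mapsto h(z)$ is continuous for the compact-open topology $\tau_{co}$ (it is just pointwise evaluation), and (e2) asserts that the closed unit ball $B_E$ is $\tau_{co}$-compact, whence $\|\delta_z\|_{E'} = \sup_{h \in B_E} |h(z)| < \infty$. This is the one place the compactness hypothesis is genuinely used, and it is where I would be most careful; everything else is bookkeeping with the definition of the $WE(Y)$-norm and the universal property of the weak topology.

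For (we1) I would fix $y \in Y$ and $f \in E$ and check directly that $f \otimes y \in WE(Y)$: it is holomorphic and locally bounded because $f$ is, and $w \circ (f \otimes y) = w(y)\, f \in E$ for every $w \in W$ since $E$ is a linear space. Linearity of $P_y$ is immediate, and the norm estimate
\[ \|P_y f\|_{WE(Y)} = \Big(\sup_{w \in W,\, \|w\| \le 1} |w(y)|\Big)\,\|f\|_E \le \|y\|\,\|f\|_E \]
gives $\|P_y\| \le \|y\|$. For (we2) I fix $w \in W$; by the very definition of $WE(Y)$ we have $w \circ g \in E$, so $Q_w$ is a well-defined linear map, and for $w \ne 0$ the definition of the $WE(Y)$-norm yields $\|w \circ g\|_E \le \|w\|\,\|g\|_{WE(Y)}$, so $\|Q_w\| \le \|w\|$.

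For (we3) I would invoke the universal property of the weak topology: a map into $(Y, \sigma(Y,W))$ is continuous exactly when its composite with each $w \in W$ is continuous. Fixing $z$ and $w$, I compute
\[ w\big(\widetilde\delta_z(g)\big) = (w \circ g)(z) = \delta_z(Q_w g), \]
so $g \mapsto w(g(z))$ equals $\delta_z \circ Q_w$, a composition of bounded linear maps by the opening step and (we2); hence each such functional is continuous on $WE(Y)$ and $\widetilde\delta_z$ is $\sigma(Y,W)$-continuous.

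Finally, for (we3') I would use that ``almost norming'' supplies a constant $c > 0$ with $\sup_{w \in W,\,\|w\| \le 1}|w(y)| \ge c\,\|y\|$ for all $y \in Y$. Applying this to $y = g(z)$ and reusing the estimate from (we3),
\[ \|\widetilde\delta_z(g)\| = \|g(z)\| \le \tfrac1c \sup_{\|w\| \le 1} |\delta_z(Q_w g)| \le \tfrac{\|\delta_z\|_{E'}}{c}\,\|g\|_{WE(Y)}, \]
which promotes the mere $\sigma(Y,W)$-continuity of (we3) to genuine norm-boundedness of $\widetilde\delta_z \colon WE(Y) \to Y$.
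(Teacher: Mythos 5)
Your proof is correct and follows essentially the same route as the paper's: the substantive ingredient in both is that (e2) forces each scalar evaluation $\delta_z$ to be bounded on $E$, combined with the identity $w(\widetilde\delta_z(g)) = \delta_z(w\circ g)$ and the elementary norm estimates for $P_y$ and $Q_w$; your appeal to the universal property of the initial topology $\sigma(Y,W)$ is just a cleaner packaging of the paper's explicit neighbourhood-chasing in (we3). If anything you are slightly more careful than the paper, which asserts $\delta_z\in E'$ without justification and, in its estimate for (we3'), silently drops the factor $\|\delta_z\|_{E'}$ that you correctly retain.
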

Here, the subspace $ W $ of $ Y' $ is called \textit{almost norming} if 
\[ q_W(x) := \sup_{w \in W, \|w\|\le 1}|w(x)| \]
defines an equivalent norm on $ Y. $
\begin{proof}
	(i) Fix $ y \in Y. $ In fact, for every $ f \in E $ we have $ w\circ(f\otimes  y) = w(y)f. $ Then
	\[\begin{aligned}
		\|P_y(f)\|_{WE(Y)} &= \sup_{\|w\|\le 1}\|w\circ(f\otimes y)\|_E  = \sup_{\|w\|\le 1}\|w(y)f\|_E  \\
		&\le \|w\|\cdot \|y\| \cdot\|f\|_E \le \|y\|\cdot \|f\|_E.
	\end{aligned}   \]
	Thus (we1) holds.
	
	(ii) Fix $ w\in W, $ for every $ g \in WE(Y) $ we have
	\[\begin{aligned}
		\|Q_{w}(g)\|_E &= \|w\circ g\|_E = \|w\|\Big\|\dfrac{w}{\|w\|}\circ g\Big\|_E \\
		&\le \|w\|\sup_{\|u\|
			\le 1}\|u\circ g\|_E = \|w\|\cdot \|g\|_{WE(Y)}.	\end{aligned}  \]
	Thus (we2) is true.  
	
(iii) Fix $ z \in B_X. $ 
	Note first that since $ E $ satisfies (e1)
	and (e2), then the evaluation maps $ \delta_z \in E' $  for $ z \in B_X$ where $  \delta_z(f) = f(z) $ for $ f \in E. $

	It is obvious that  $ w(\widetilde{\delta}_z(g)) = \delta_z(w\circ g)$ for every $g \in WE(Y)$ and   for every $ w\in W. $  
	Let $ V $ be a $ \sigma(Y,W)$-neighbourhood of $ 0 $ in $ Y. $ Without loss of genarality we may assume $ V=\{y \in Y:\ |w(y)| < 1\} $ for some $ w \in W. $ Then $ \widetilde{\delta}_z( \|\delta_z\|^{-1}\|w\|^{-1}B_{WE(Y)}) \subset V, $ where  $ B_{WE(Y)} = \{g \in WE(Y):\ \|g\|_{WE(Y)}< 1\}$ is   the unit ball of $ WE(Y). $ Indeed, for every $g \in B_{WE(Y)}$ we  have
	\[\begin{aligned}
		|w(\widetilde{\delta}_z(\|\delta_z\|^{-1}\|w\|^{-1}g))| &= \|\delta_z\|^{-1}|\delta_z(\|w\|^{-1}w\circ g)| \\
		&\le \|\delta_z\|^{-1}\|\delta_z\| \cdot\big\|\|w^*\|^{-1}w\circ g\big\| \\
		&\le  \sup_{u \in W,\|u\|\le 1}\|u\circ g\|_E =\|g\|_{WE(Y)} < 1.
	\end{aligned}   \]
	Thus, (we3) holds for $ (E, WE(Y)). $
	
	In the case where $ W $ is almost norming, since $ q_W $ defines an equivalent norm, there exists $ C>0 $ such that
	\[\begin{aligned}
\|\widetilde{\delta}_z(g)\| &= \|g(z)\| \le Cq_W(g(z))  = C\sup_{w\in W,\|w\|\le1}|w(g(z))| \\
 &\le C\sup_{w\in W,\|w\|\le1}\|w\circ g\| = C\|g\|_{WE(Y)}\quad \forall g \in WE_1(Y).
	\end{aligned}  \]
The assertion (we3') is proved.
\end{proof}
Finally,  we discus the linearization theorem for   spaces $ WE(Y)$ which is usefull in our investigation on the boundedness and the (weak) compactness of operators $ \widetilde{W}_{\psi, \varphi}$ in the next section.

\begin{thm}[Linearization]\label{thm_1.1} Let $X, Y$ be   complex  Banach spaces and $W \subset Y'$ be a separating subspace. Let $ E $ be a Banach space of holomorphic functions $ B_X \to \C $ satisfying (e1)-(e2). Then there exist a Banach space $^*\!E$ and a mapping $\delta_X \in H(B_X, {^*\!E})$ with the following  universal property:  A function $f \in WE(Y)$ if and only if there is a unique mapping $T_f \in L(^*\!E, Y)$ such that $T_f \circ \delta_X = f.$ This property characterize $^*\!E$ uniquely up to an isometric isomorphism.
	
	Moreover, the mapping
	$$ \Phi: f \in WE(Y) \mapsto T_f  \in L(^*\!E, Y)  $$
	is a topological isomorphism.
\end{thm}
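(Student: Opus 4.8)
The plan is to realise $^*\!E$ as the canonical predual of $E$ furnished by the Dixmier--Ng theorem and then transport the universal property through the duality $E=({^*\!E})'$. First I would set
\[ {^*\!E} := \{\phi \in E' : \phi|_{B_E}\ \text{is}\ \tau_{co}\text{-continuous}\}, \]
a norm-closed subspace of $E'$. Since (e2) says $B_E$ is $\tau_{co}$-compact, the Dixmier--Ng theorem gives that the natural map $J:E\to({^*\!E})'$, $J(g)(\phi)=\phi(g)$, is an isometric isomorphism, so $E$ is a dual space with predual $^*\!E$. The point evaluations $\delta_z$ lie in $^*\!E$ because $\tau_{co}$-convergence forces pointwise convergence, and a one-line Hahn--Banach argument shows $^*\!E=\overline{\operatorname{span}}\{\delta_z:z\in B_X\}$: any $\Psi\in({^*\!E})'=E$ annihilating every $\delta_z$ equals $J(g)$ with $g(z)=0$ for all $z$, hence $g=0$, so the annihilator of the span is trivial. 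I would then define $\delta_X(z):=\delta_z$ and verify $\delta_X\in H(B_X,{^*\!E})$ by Dunford's weak holomorphy principle: for each $g\in E=({^*\!E})'$ the scalar map $z\mapsto J(g)(\delta_z)=g(z)$ is holomorphic, whence $\delta_X$ is holomorphic.

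For the universal property, take $f\in WE(Y)$. For every $w\in W$ we have $w\circ f\in E=({^*\!E})'$, and $w\mapsto w\circ f$ is linear with $\|w\circ f\|_E\le\|w\|\,\|f\|_{WE(Y)}$ by (we2). I would first define $T_f$ on $\operatorname{span}\{\delta_z\}$ by $T_f(\sum_j c_j\delta_{z_j}):=\sum_j c_j f(z_j)\in Y$; this is well defined because if $\sum_j c_j\delta_{z_j}=0$ in $^*\!E$ then $w(\sum_j c_j f(z_j))=\sum_j c_j(w\circ f)(z_j)=0$ for every $w\in W$, and $W$ separates the points of $Y$. By construction $T_f(\delta_z)=f(z)$, i.e. $T_f\circ\delta_X=f$ on the span, and the defining relation $w(T_f\phi)=\phi(w\circ f)$ gives, for $\phi$ in the span,
\[ q_W(T_f\phi)=\sup_{\|w\|\le1}|\phi(w\circ f)|\le\|\phi\|_{^*\!E}\,\|f\|_{WE(Y)}. \]

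The crux, and the step I expect to be the main obstacle, is to upgrade this to a bounded $Y$-valued operator on all of $^*\!E$. The estimate above controls $T_f\phi$ only in the seminorm $q_W$ coming from the embedding $Y\hookrightarrow W'$; to conclude $T_f\phi\in Y$ with $\|T_f\phi\|_Y\lesssim\|\phi\|_{^*\!E}\|f\|_{WE(Y)}$ I must use that $W$ is norming, so that $q_W\asymp\|\cdot\|_Y$ and the image of $Y$ is a \emph{closed} subspace of $W'$. Then $T_f$ is bounded on the dense subspace $\operatorname{span}\{\delta_z\}$, extends continuously to $^*\!E$, and its range stays inside the closed set $Y$, yielding $T_f\in L({^*\!E},Y)$ with $\|T_f\|\asymp\|f\|_{WE(Y)}$. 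For merely separating $W$ the image of $Y$ need not be closed in $W'$ and one obtains the extension only into the completion of $(Y,q_W)$; this is precisely where the strengthening recorded in (we3') becomes essential.

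Finally I would close the loop. The converse is routine: for $T\in L({^*\!E},Y)$ set $f:=T\circ\delta_X$, which is holomorphic and locally bounded, and for $w\in W$ one has $w\circ T\in({^*\!E})'=E$ with $(w\circ f)(z)=(w\circ T)(\delta_z)=g(z)$ for the corresponding $g\in E$, so $w\circ f=g\in E$ and $f\in WE(Y)$. Uniqueness of $T_f$ follows from the density of $\operatorname{span}\{\delta_z\}$, and the two norm inequalities assembled above show that $\Phi:f\mapsto T_f$ is a topological isomorphism of $WE(Y)$ onto $L({^*\!E},Y)$. The uniqueness of $({^*\!E},\delta_X)$ up to isometric isomorphism is the standard universal-property argument: given a second pair $(G,\rho)$, apply the universal property of each with the target Banach space taken to be the other predual (and $W$ its full dual) to manufacture mutually inverse operators intertwining $\delta_X$ and $\rho$, which the norm relations force to be isometries.
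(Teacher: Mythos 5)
Your construction is, step for step, the paper's own construction: the same Dixmier--Ng predual $^*\!E$, the same Hahn--Banach argument giving density of $\operatorname{span}\{\delta_z\}$ (the paper's (\ref{dense})), holomorphy of $\delta_X$ via weak holomorphy, the operator $T_f$ determined by the pairing $w(T_f\phi)=\phi(w\circ f)$, and the same converse, uniqueness, and universal-property arguments. So this is not a different route; the only divergence is at the step you single out as the crux.

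And there your caution is exactly right, because that is precisely where the paper's own proof is invalid. The paper defines $T_f\in L({^*\!E},W')$, checks $T_f\delta_x=f(x)\in Y$, and concludes ``then, by (\ref{dense}), $T_f\in L({^*\!E},Y)$.'' Density only places $T_f({^*\!E})$ in the $W'$-norm \emph{closure} of the canonical image $j(Y)\subset W'$, and for a merely separating $W$ that image need not be closed (equivalently, $q_W$ need not be equivalent to $\|\cdot\|_Y$); for the same reason the asserted identity $\|T_f\|=\|f\|_{WE(Y)}$ is correct only for the $W'$-valued operator norm, not the $Y$-valued one. Under the almost-norming hypothesis --- the paper's (we3') --- $j(Y)$ is closed in $W'$, your extension does land in $Y$, and one gets $\|T_f\|\asymp\|f\|_{WE(Y)}$, hence the stated topological isomorphism; this is the correct theorem. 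Moreover, the separating case is not just resistant to this method: the statement is false there. Take $E=H^\infty(\BB_1)$ (which satisfies (e1)--(e2)), let $W\subset Y'$ be separating but not almost norming (classical ``total non-norming'' subspaces), choose $y_n$ with $q_W(y_n)\le1$ and $\|y_n\|\ge 4^n$, and set $f(z)=\sum_n 2^{-n}z^{k_n}y_n$ with $k_n$ growing fast enough that $\sum_n 2^{-n}r^{k_n}\|y_n\|<\infty$ for every $r<1$. Then $f$ is holomorphic and locally bounded, $|w(f(z))|\le\|w\|$ for all $w\in W$, so $f\in WE(Y)$ with $\|f\|_{WE(Y)}\le1$; but Cauchy's estimates applied to the coefficients $2^{-n}y_n$ force $\sup_z\|f(z)\|_Y=\infty$, whereas any $T\in L({^*\!E},Y)$ with $T\circ\delta_X=f$ would give $\|f(z)\|_Y\le\|T\|\,\|\delta_z\|_{{}^*\!E}=\|T\|$ (since $\|\delta_z\|=1$ for $E=H^\infty$). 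So no such $T$ exists. In short: your proposal is the paper's proof carried out honestly, and the hypothesis you insist on is the hypothesis the theorem actually needs; the ``gap'' lies in the paper, not in your argument.
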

\begin{proof}
	Let us denote by $^*\!E$  the closed subspace of all linear functionals $u \in E'$ such that $u\big|_{B_E}$ is $\tau_{co}$-continuous. 
	By the Ng Theorem \cite[Theorem 1]{Ng}   the evaluation mapping
	$$ J: E \to (^*\!E)' $$
	given by
	$$ (Jf)(u) = u(f) \quad \forall u \in  {^*\!E},$$
	is a topological isomorphism. 
	
	Let $\delta_X: B_X \to \ ^*\!E$ be the evaluation mapping given by 
	$$\delta_X(x) = \delta_x$$
	with  
	$\delta_x(g) := g(x)$ for all $g\in E.$ 
	
	Since
	\begin{equation}\label{eq_1}
		(Jg) \circ \delta_X(x) = \delta_x(g) = g(x)
	\end{equation}
	for all $g \in E,$ $x \in B_X,$    and $J$ is surjective, we can check that the map $\delta_X: B_X \to\ ^*\!E$ is  weak holomorphic, and hence holomorphic because $E$ is Banach. 
	
	Next we prove that 
	\begin{equation}\label{dense}
		\text{\rm span}\{\delta_x: \ x\in B_X\} \ \text{\rm  is a dense subspace of $^*\!E$}.
	\end{equation}
	Indeed, otherwise, by the Hahn-Banach theorem we can find $\eta \in (^*\!E)',$ $\eta \neq 0,$ such that $\eta(\delta_x) = 0$ for every $x \in B_X.$ But, since $J$ is surjective, we have $\eta = Jg$ for some $g \in E.$ Then we get
	$$ g(x) = \delta_x(g) = \eta(\delta_x) = 0 \quad \forall x \in B_X $$ 
	and hence $\eta = 0,$ a contradiction.
	
	Now we show that $^*\!E$ and $\delta_X$ have required universal property.
	
	First, given a locally bounded function $f: B_X \to Y.$ Assume that there exists $T_f \in L(^*\!E, Y)$ such that $T_f \circ \delta_X = f.$ Since $T_f$ is continuous and $ \delta_X $ is holomorphic, it follows that $ u\circ f \in H(B_X) $ for every $ u \in W. $ Since $ W $ is separating, according to   \cite[Lemma 4.2]{QLD} we have $ f \in H_{LB}(B_X,Y). $  Next, it follows from $(^*\!E)' = E$  that $u \circ f \in E$ for each $u \in W,$ and then $f \in WE(Y).$ 
	
	Now we will prove the converse of the statement. Fix $ f \in WE(Y). $
	
	\vskip0.2cm
	(i)  \textit{The case of $Y = \C$:} We define $T_f := Jf.$ It follows from (\ref{eq_1}) that $T_f \circ \delta_X = f.$ From (\ref{dense}) we obtain the uniqueness of $T_f.$
	
	\vskip0.2cm
	(ii) \textit{The case of $Y$ is Banach:}  We define $T_f: ~ ^*\!E \to W'$   by
	\begin{equation}\label{formula_1} (T_fu)(\varphi) = T_{\varphi \circ f}(u) = u(\varphi \circ f) \quad \forall u \in  {^*\!E,}\ \forall \varphi \in W,
	\end{equation}
i.e. $ T_{\varphi\circ f} $ is defined as in the case (i).

	It is easy to check that $ T_f \in L(^*\!E,W') $ and $ \|T_f\| = \|f\|_{WE(Y)},$ hence, $ \Phi $ is a isometric isomorphism.
	
	 Furthermore, 
	\[ (T_f\delta_x)(\varphi) = (\varphi\circ f)(x)\]
	for every $x  \in B_X $ and $ \varphi \in W $ and, therefore, since $ W $ is separating we get $ T_f\delta_x =f(x) \in Y $ for every $ x\in B_X. $ Then, by (\ref{dense})  $ T_f \in L(^*\!E,Y). $ The uniqueness of $ T_f $ follows
	also from the fact (\ref{dense}) that $ \delta_X(B_X) $ generates a dense subspace of $ ^*\!E. $

	Finally, the uniqueness of $ ^*\!E $ up to an isometric isomorphism follows
from the universal property, together with the isometry $ \|T_f\|=\|f\|_{WE(Y)}. $ This
completes the proof.
\end{proof}
\section{The  weighted composition operators}
\setcounter{equation}{0}
Let $ E_1 $ and $ E_2 $ be  Banach spaces of holomorphic functions $ B_X \to \C $ satisfying the conditions (e1) and (e2).  Let $\psi \in H(B_X)$ and  $\varphi \in S(B_X),$  the set of holomorphic self-maps of $B_X.$  Consider the operators $ W_{\psi,\varphi}: E_1 \to E_2 $ and $ \widetilde{W}_{\psi,\varphi}: WE_1(Y) \to WE_2(Y)$ given by
\[ W_{\psi,\varphi}(f) := \psi\cdot(f \circ\varphi), \quad \widetilde{W}_{\psi,\varphi}(g) := \psi\cdot(g \circ\varphi)\quad \forall f\in E_1, \forall g \in WE_1(Y).\]

\begin{thm}\label{thm_WO}
 Let $X, Y$ be   complex  Banach spaces and $W \subset Y'$ be a  subspace. Let $ E_1 $ and $ E_2 $ be Banach spaces of holomorphic functions $ B_X \to \C $ satisfying (e1)-(e2).  Let $\psi \in H(B_X)$ and  $\varphi \in S(B_X).$  
 \begin{enumerate}
 	\item[\rm (1)] If $ W $ is separating then $ W_{\psi,\varphi} $    is bounded if and only if $ \widetilde{W}_{\psi,\varphi}$ is bounded;
 	\item[\rm (2)] If $ W $ is almost norming and   $W_{\psi,\varphi}    $ compact then:
 	\begin{itemize}
 		\item[\rm (a)]  $\widetilde{W}_{\psi,\varphi} $ is  compact if and only if the identity map $ I_Y: Y \to Y $ is   compact,     i.e. $ \dim Y <\infty;$
 			\item[\rm (b)]  $\widetilde{W}_{\psi,\varphi} $ is weakly compact if and only if the identity map $ I_Y: Y \to Y $ is weakly compact.
 	\end{itemize}
 \end{enumerate}
\end{thm}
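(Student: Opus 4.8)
The plan is to exploit the linearization isomorphism $\Phi\colon WE_i(Y)\to L({}^*\!E_i,Y)$ from Theorem \ref{thm_1.1} together with the two canonical maps $P_y$ and $Q_w$ from Proposition \ref{Prop2.1}, so that statements about $\widetilde W_{\psi,\varphi}$ get transferred to statements about $W_{\psi,\varphi}$ and about the identity $I_Y$. The first observation is a naturality (intertwining) relation: for every $y\in Y$ one has $Q_w\circ\widetilde W_{\psi,\varphi}\circ P_y = w(y)\,W_{\psi,\varphi}$ as operators $E_1\to E_2$, since $w\circ(\psi\cdot((f\otimes y)\circ\varphi)) = \psi\cdot w(y)\,(f\circ\varphi)$. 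Similarly $\widetilde W_{\psi,\varphi}$ commutes with the inclusion of scalar functions in the sense $\widetilde W_{\psi,\varphi}(f\otimes y)=(W_{\psi,\varphi}f)\otimes y$. These identities are the backbone of everything that follows, so I would establish them first as a short lemma.

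For part (1), boundedness of $\widetilde W_{\psi,\varphi}$ trivially forces boundedness of $W_{\psi,\varphi}$: pick any $y$ with some $w\in W$, $w(y)\ne0$ (possible because $W$ separates $Y$), and read off boundedness of $W_{\psi,\varphi}$ from the intertwining identity together with the boundedness of $P_y$ and $Q_w$. For the converse I would argue directly on the defining norm: if $W_{\psi,\varphi}\colon E_1\to E_2$ has norm $M$, then for $g\in WE_1(Y)$ and any $w\in W$ with $\|w\|\le1$ we have $w\circ\widetilde W_{\psi,\varphi}(g)=W_{\psi,\varphi}(w\circ g)$, whence $\|w\circ\widetilde W_{\psi,\varphi}(g)\|_{E_2}\le M\,\|w\circ g\|_{E_1}\le M\,\|g\|_{WE_1(Y)}$; taking the supremum over such $w$ gives $\|\widetilde W_{\psi,\varphi}(g)\|_{WE_2(Y)}\le M\,\|g\|_{WE_1(Y)}$. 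This is clean and needs only that $W$ is separating (to get the forward implication) plus the submultiplicativity of the associated norm.

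For part (2), I would pass through the linearization. Under $\Phi$, the operator $\widetilde W_{\psi,\varphi}$ corresponds to an operator on $L({}^*\!E_i,Y)$, and the scalar operator $W_{\psi,\varphi}$ corresponds (via $\Phi$ in the case $Y=\C$) to an operator between the preduals, call its adjoint-type action $S\colon {}^*\!E_2\to{}^*\!E_1$. The key structural fact is that $\widetilde W_{\psi,\varphi}$ is then realized as the map $T\mapsto T\circ S$ on operator spaces, i.e. $\widetilde W_{\psi,\varphi}\cong (\,\cdot\,)\circ S$ with $S$ a fixed (compact, once $W_{\psi,\varphi}$ is compact and $J$ is an isomorphism) operator. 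Once in this form, compactness of a composition operator $R_S\colon T\mapsto T\circ S$ on $L({}^*\!E_2,Y)$ is governed by an operator-ideal factorization: $R_S$ is compact (respectively weakly compact) precisely when $S$ is compact and $I_Y$ is compact (respectively weakly compact). The necessity of the $I_Y$ condition I would get by restricting $\widetilde W_{\psi,\varphi}$ to the subspace $\{f_0\otimes y : y\in Y\}$ for a fixed $f_0$ with $W_{\psi,\varphi}f_0\ne0$, on which $\widetilde W_{\psi,\varphi}$ acts as $y\mapsto(W_{\psi,\varphi}f_0)\otimes y$, isometrically equivalent to $I_Y$ up to scalars (here almost-norming of $W$ is exactly what makes $y\mapsto f\otimes y$ bounded below, so this restriction is an isomorphic embedding); a (weakly) compact operator restricted to a complemented-or-embedded copy of $Y$ forces $I_Y$ to be (weakly) compact.

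The sufficiency direction, assuming $I_Y$ is (weakly) compact together with compactness of $W_{\psi,\varphi}$, is where I expect the main obstacle. The delicate point is to promote compactness of the scalar operator $W_{\psi,\varphi}$ (equivalently, of $S$ on the preduals) plus (weak) compactness of $I_Y$ into (weak) compactness of the composition-with-$S$ map on $L({}^*\!E_2,Y)$. One cannot simply take tensor products of compact operators because $L({}^*\!E_2,Y)$ with the operator norm is not a tensor product in a way that keeps compactness transparent; instead I would use the characterization of (weak) compactness via bounded sequences and a diagonal/Arzel\`a--Ascoli argument on the linearized picture. Concretely, given a bounded sequence $(g_n)$ in $WE_1(Y)$, I would show the images $\widetilde W_{\psi,\varphi}(g_n)$ admit a convergent (respectively weakly convergent) subsequence by combining: compactness of $W_{\psi,\varphi}$ to control the "base" directions coming from ${}^*\!E$, and (weak) compactness of $I_Y$ to control the "fiber" values in $Y$, with the almost-norming hypothesis ensuring that the $WE_2(Y)$-norm is equivalent to a norm built from genuine $Y$-valued evaluations so that these two controls combine into norm (respectively weak) convergence. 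Making this passage rigorous — in particular handling the weakly compact case, where one must invoke a Davis--Figiel--Johnson--Pe\l czy\'nski style factorization of $I_Y$ and check it interacts correctly with the compact $S$ — is the technical heart of the argument, and I would isolate it as the principal lemma before assembling the equivalences in (a) and (b).
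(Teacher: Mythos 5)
Your overall strategy coincides with the paper's: part (1) is proved exactly as in the paper (the intertwining identity $Q_w\circ\widetilde W_{\psi,\varphi}\circ P_y = w(y)W_{\psi,\varphi}$ in one direction, the norm computation $\|w\circ\widetilde W_{\psi,\varphi}(g)\|_{E_2}=\|W_{\psi,\varphi}(w\circ g)\|_{E_2}$ in the other), and in part (2) the paper likewise passes through the linearization of Theorem \ref{thm_1.1}, writing $\widetilde W_{\psi,\varphi}=\Phi_2\circ T_{\psi,\varphi}\circ\Phi_1^{-1}$ with $T_{\psi,\varphi}(A)=I_Y\circ A\circ (W_{\psi,\varphi})^{*}\big|_{{}^*\!E_2}$, which is precisely your ``composition with $S$'' picture. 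For necessity the paper uses the factorization $I_Y=S\circ\widetilde W_{\psi,\varphi}\circ R$, with $R\colon y\mapsto 1\otimes y$ and $S=\psi(z_0)^{-1}\widetilde\delta_{z_0}$ bounded by (we1) and (we3'); your variant (the embedding $y\mapsto f_0\otimes y$ is bounded below exactly when $W$ is almost norming, and a (weakly) compact operator cannot be an isomorphism onto a copy of a non-reflexive/infinite-dimensional $Y$) is a correct and essentially equivalent argument.

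The genuine gap is the sufficiency direction, and you have located it yourself: the statement ``$S$ compact and $I_Y$ (weakly) compact imply $T\mapsto I_Y\circ T\circ S$ is (weakly) compact on $L({}^*\!E_1,Y)\to L({}^*\!E_2,Y)$'' is asserted as a ``key structural fact'' but never proved; your Arzel\`a--Ascoli/diagonal sketch and the appeal to a Davis--Figiel--Johnson--Pe\l czy\'nski factorization are plausible directions, but as written they are a plan, not an argument. The paper closes exactly this step by citation: compactness follows from \cite[Theorem 2.1 and Remark 2.4]{ST} and weak compactness from \cite[Proposition 2.3 and Remark 2.4]{ST}, i.e.\ from the Saksman--Tylli theory of two-sided multiplication (elementary) operators $A\mapsto UAV$. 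So what you propose to prove as your ``principal lemma'' is a known theorem, and invoking it would complete your proof immediately. Note also an asymmetry worth exploiting: in case (a) the hypothesis $I_Y$ compact forces $\dim Y<\infty$, so $L({}^*\!E_i,Y)$ is isomorphic to a finite direct sum $E_i\oplus\dots\oplus E_i$ (using $({}^*\!E_i)'=E_i$) and $T\mapsto T\circ S$ becomes a finite direct sum of copies of a compact scalar operator --- no Arzel\`a--Ascoli argument is needed. It is only the weakly compact case (b) that genuinely requires the DFJP-type machinery, i.e.\ the nontrivial content of the Saksman--Tylli result you would otherwise be reproving.
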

\begin{proof}
(1) Let $ y\in Y, $   $ w\in W $ such that $ \|y\| = \|w\| = 1 $ and $ w(y) = 1. $ Consider the maps $ P_y $ and $ Q_w $ as in  Proposition \ref{Prop2.1}. It is easy to check that
\[ W_{\psi,\varphi} = Q_w \circ \widetilde{W}_{\psi, \varphi} \circ P_y. \]
By Proposition \ref{Prop2.1}, the operators $P_y, Q_w $ are bounded, hence we have $ W_{\psi,\varphi} $ is bounded if $ \widetilde{W}_{\psi,\varphi} $  is bounded.

In the converse direction, note that for every $ g\in WE_1(Y) $ and $ w \in W $ we have
\[ \|w\circ (\widetilde{W}_{\psi,\varphi}(g))\|_{E_2} = \|W_{\psi,\varphi}(w\circ g)\|_{E_2} \le \|W_{\psi,\varphi}\|\cdot\|w\circ g\|_{E_1}. \]
Consequently, $ \|\widetilde{W}_{\psi,\varphi}\| \le \|W_{\psi,\varphi}\|. $ 

(2) Assume that $ W $ is almost norming and $  W_{\psi,\varphi} $ is compact. Let $ z_0 \in B_X $ such that $ \psi(z_0) \neq 0. $ Put
\[\begin{aligned}
	R: Y&\to WE_1(Y)\\
	y &\mapsto f_y,
\end{aligned}\quad \text{and}\quad
\begin{aligned}
	S: WE_2(Y) &\to Y \\
	h &\mapsto \psi(z_0)^{-1}h(z_0),
\end{aligned} \]
where $ f_y(z) := 1 \otimes y \equiv y $ for every $ z \in B_X. $ Here, the function $ 1 \in E_1 $ by the condition  (e1). It is easy to see that
\begin{equation}\label{eq_3.1}
I_Y = S \circ \widetilde{W}_{\psi, \varphi} \circ R.
\end{equation}  
 Note that $ S = \psi(z_0)^{-1}\widetilde{\delta}_{z_0}. $  It follows from (we1) and (we3') that $ R $ and $ S $ are bounded operators. 
 

Therefore, by (\ref{dense}) we have $ (W_{\psi,\varphi})^*(^*\!E_2) \subset {^*\!E_1}. $ Put $ T_{\psi,\varphi}: L(^*\!E_1, Y) \to L(^*\!E_2, Y) $ given by
\[ A \mapsto I_Y \circ A \circ (W_{\psi,\varphi})^*\big|_{^*\!E_2}. \]
By the compactness of $ W_{\psi,\varphi} $ we have $ (W_{\psi,\varphi})^*\big|_{^*\!E_1}: {^*\!E_2} \to {^*\!E_1} $ is compact. 
Next, we will show that
\begin{equation}\label{eq_3.2}
\widetilde{W}_{\psi,\varphi} = \Phi_2 \circ T_{\psi,\varphi} \circ \Phi_1^{-1}
\end{equation}  
where $ \Phi_1:  L(^*\!E_1, Y) \to WE_1(Y)$ and $ \Phi_2:  L(^*\!E_2, Y) \to WE_2(Y)$ are isometric isomorphisms from Theorem  \ref{thm_1.1}. 
Indeed, we have
\[\begin{aligned}
	\big((\Phi_2 \circ T_{\psi,\varphi} \circ \Phi_1^{-1})(f)\big)(z) &= \Big(\Phi_2(\Phi_1^{-1}(f)\circ(W_{\psi,\varphi})^*\big|_{^*\!E_2})\Big)(z) = \Phi_1^{-1}(f)\big((W_{\psi,\varphi})^*(\delta_z)\big)\\
	&=\Phi_1^{-1}(f)(\psi(z)\delta_{\varphi(z)}) = \psi(z)\Phi_1^{-1}(f)(\delta_{\varphi(z)} = \psi(z)(f\circ \varphi)(z)
\end{aligned} \]
for every $ f \in E_1 $ and $ z \in B_X. $

  (a)  First assume that $ \widetilde{W}_{\psi,\varphi} $ is   compact. By the boundedness of the operators $ R,S, $ (\ref{eq_3.1}) implies the compactness of  $ I_Y. $ 
  
 Now, suppose $ I_Y $ is compact. By the compactness of  $ (W_{\psi,\varphi})^*\big|_{^*\!E_1},  $   it follows from \cite[Theorem 2.1 and Remark 2.4]{ST} that $ T_{\psi,\varphi} $ is  compact. Hence, by (\ref{eq_3.2}), $ \widetilde{W}_{\psi,\varphi} $ is  compact.
  
  (b)  We assume that $ \widetilde{W}_{\psi,\varphi} $ is weakly compact.  Then, (\ref{eq_3.1}) implies that $ I_Y $ is weakly compact.
  
In the contrary case, suppose $ I_Y $ is weakly compact.  
By an argument analogous   to the case (a) but using \cite[Proposition 2.3 and Remark 2.4]{ST} instead of \cite[Theorem 2.1 and Remark 2.4]{ST} we get $ \widetilde{W}_{\psi,\varphi
} $ is weakly compact.
 \end{proof}
\begin{rmk} In the case $ W $ is separating and $ Y $ is separable, we have $ W $ is almost norming and the identity operator $ I_Y $ is weakly compact. Then as in (b) we get $ \widetilde{W}_{\psi,\varphi} $ is weakly compact  if $ W_{\psi,\varphi} $ is compact.  
	\end{rmk}
\section{The Bloch-type spaces on the unit ball of a Hilbert space}
\setcounter{equation}{0}

Throughout the forthcoming, unless otherwise specified, we shall denote by $X$   a complex Hilbert space with the open unit ball $ B_X $ and $ Y $ a Banach space. Denote
\[ H^\infty(B_X, Y) =  \Big\{f \in H(B_X,Y): \ \sup_{z \in B_X}\|f(z)\| <\infty\Big\}. \]

It is easy to check that $ H^\infty(B_X,Y) $ is Banach under the sup-norm
\[ \|f\|_\infty := \sup_{z \in B_X}\|f(z)\|. \]

Let $ (e_k)_{k \in \Gamma}$ be an orthonormal basis of $ X $ that we fix at once. Then every $ z \in X $ can be written as \[ z = \sum_{k \in \Gamma}z_ke_k, \quad  \overline{z} = \sum_{k \in \Gamma}\overline{z_k}e_k.\]

Given $ f \in H(B_X,Y) $ and $ z \in B_X. $ Then, for each $ u\in Y', $ we denote by $ \nabla(u\circ f)(z) $ the gradient of $ u\circ f $ at $ z. $ It is the unique element in $ X $ representing the linear functional $ u\circ f'(z) = (u\circ f)'(z) \in Y'.$ We can write 
\[ \nabla(u\circ f)(z) = \Big(\frac{\partial(u\circ f)}{\partial z_k}(z)\Big)_{k\in \Gamma}, \] 
 hence, for every $ x \in X $
\[ (u \circ f'(z))(x) = (u\circ f)'(z)(x) = \sum_{k\in \Gamma}\frac{\partial(u\circ f)}{\partial z_k}(z)x_k = \langle x, \overline{\nabla(u\circ f)(z)}\rangle. \]
Now for every $ z \in B_X $ we define
\[\begin{aligned}
	 \|\nabla f(z)\| &:= \sup_{u\in Y', \|u\|=1}\|\nabla(u\circ f)(z)\|, \\
	 \|Rf(z)\|&:=  \sup_{u\in Y', \|u\|=1}|R(u\circ f)(z)|,
\end{aligned}\]
where
\[ R(u\circ f)(z) = \langle\nabla(u\circ f)(z), \overline{z}\rangle \]
 is the radial derivative of $ u\circ f $at $ z. $
It is obvious that $ \|Rf(z)\| \le \|\nabla f(z)\|\|z\| <  \|\nabla f(z)\| $ for every $ z \in B_X. $

For $\varphi \in S(B_X),$  the set of  holomorphic self-maps on $ B_X$ we write $ \varphi(z) = \sum_{k\in \Gamma}\varphi_k(z) $ and $ \varphi'(z): X \to X$ its derivative at $ z, $ and $ R\varphi(z) = \langle \varphi'(z),\overline{z}\rangle $ its radial derivative at $ z. $



\begin{defn}\label{weight}
	A positive, continuous function $ \mu $ on the interval $ [0,1) $ is called normal  if there are  three constants $0 \le \delta < 1$ and $0 < a < b < \infty$ such that
	\begin{equation}\label{w_1}
		\frac{\mu(t)}{(1 - t)^{a}} \ \text{is decreasing on $[\delta, 1)$},\quad \lim_{t\to1}\frac{\mu(t)}{(1 - t)^{a}}=0, \tag{$ W_1 $}
	\end{equation}
	\begin{equation}\label{w_2}
		\frac{\mu(t)}{(1 - t)^{b}} \ \text{is increasing on $[\delta, 1)$},\quad \lim_{t\to1}\frac{\mu(t)}{(1 - t)^{b}} =\infty. \tag{$ W_2 $}
	\end{equation}
	If we say that a function $ \mu: B_X \to [0, \infty) $ is normal, we also assume that it is radial, that is, $ \mu(z) = \mu(\|z\|) $ for every $ z\in B_X. $
	
	Then, it follows from ($W_1$) that  a normal function $ \mu $ is strictly decreasing on $ [\delta, 1) $ and $ \mu(t) \to 0 $ as $ t \to 1.$
	
%
%
%
On the other hand, by  (\ref{w_2}) we easily obtain   that
		\begin{equation}\label{S_mu}
S_\mu := \sup_{t \in [0,1)}\frac{(1-t)^b}{\mu(t)} <\infty.
	\end{equation}   
\end{defn}
%

Throughout this paper, the weight $ \mu  $ always is assumed  to be   normal.  In the sequel,  when no confusion can arise,   we will use the symbol  $\lozenge$ to denote either $\nabla$ or $R.$


We define  Bloch-type spaces   on the unit ball $ B_X $ as follows:
 \[
 \mathcal B^\lozenge_\mu(B_X,Y) := \Big\{f \in H(B_X,Y):\ 
 \|f\|_{s\mathcal B_\mu^\lozenge(B_X,Y)}:= \sup_{z \in B_X}\mu(z)\|\lozenge f(z)\| <\infty\Big\}. \]
It is easy to check $ \|\cdot\|_{s\mathcal B^\lozenge_\mu(B_X,Y)}  $  is a semi-norm on $  \mathcal B_\mu^\lozenge(B_X,Y) $  and this space is Banach under the sup-norm
	\[
	 \|f\|_{\mathcal B_\mu^\lozenge(B_X,Y)} := \|f(0)\| + \|f\|_{s\mathcal B_\mu^\lozenge(B_X,Y)}.\]
		We also define  little Bloch-type spaces   on the unit ball $ B_X $ as follows:
		\[
			\mathcal B^{\lozenge}_{\mu,0}(B_X,Y) := \Big\{f \in \mathcal B_\mu^\lozenge(B_X,Y):\ \lim_{\|z\|\to 1}\mu(z)\|\lozenge f(z)\| = 0\Big\} \]
	endowed with the norm induced by $ \mathcal B_\mu^\lozenge(B_X,Y). $ 
		
			In the case $ Y=\C $ we write $ \mathcal{B}_\mu^\lozenge(B_X),$  $ \mathcal{B}_{\mu,0}^\lozenge(B_X)$  instead of the respective notations.
			
			It is clear that for every separating subspace $ W $ of $ Y $ we have
			 \[\mathcal B_\mu^\lozenge(B_X,Y) \subset WB_\mu^\lozenge(B_X)(Y),\quad   \mathcal B^{\lozenge}_{\mu,0}(B_X,Y) \subset W\mathcal B^{\lozenge}_{\mu,0}(B_X)(Y). \] 
%
%
	
	For $ \mu(z) = 1 -\|z\|^2 $ we write $ \mathcal B^\lozenge(B_X, Y) $ instead of $ \mathcal B^\lozenge_\mu(B_X, Y) $ and when $ \dim X = m, $ $ Y =\C $ we obtain correspondingly the classical Bloch-type space $ \mathcal B^\lozenge(\BB_m). $
	
We will show below that the study of  Bloch-type spaces   on the unit ball can be reduced to studying functions defined on finite dimensional subspaces.

For each $ m \in \N $ we denote
\[ z_{[m]} := (z_1, \ldots,z_m) \in \BB_m\]
where $ \BB_m $ is the open unit ball in $ \C^m. $ For $ m \ge 2 $ by
\[ OS_m := \{x = (x_1, \ldots, x_m),\ x_k \in X, \langle x_k, x_j\rangle = \delta_{kj}\} \]
we denote the family of orthonormal systems of order $ m. $
 
It is clear that $ OS_1 $ is the unit sphere of $ X. $

For every $ x \in OS_m, $ $ f \in H(B_X,Y) $ we define
\[ f_x(z_{[m]})  = f\Big(\sum_{k=1}^mz_kx_k\Big).\]
Then  
\[ \nabla(u\circ f_x)(z_{[m]}) = \Big(\dfrac{\partial (u\circ f_x)}{\partial z_j}\Big(\sum_{k=1}^mz_kx_k\Big)\Big)_{j \in \Gamma}\quad \text{for every} \  u \in Y', \]
and hence
\begin{equation}\label{eq_2.1}
\Big\|\nabla f_x(z_{[m]})\Big\| =\Big\|\nabla f\Big(\sum_{k=1}^mz_kx_k\Big)\Big\|.
\end{equation} 
Now, for each finite subset $ F \subset \Gamma, $ in symbol $ |F| <\infty, $ we denote by $ \BB_{[F]} $ the unit ball of $ \text{span}\{e_k,\ k \in F\} $ and  $ f_F = f_x $ where $ x = \{e_k,\ k \in F\}.$  
For each $ z \in B_X $ and each $ F \subset \Gamma $ finite we write
\[ z_F = \sum_{k \in F}z_ke_k \in \BB_{[F]}. \]
\begin{defn} Let $ \BB_1 $ be the open unit ball in $ \C $ and $ f \in H(B_X,Y). $
	We define an affine semi-norm as follows
\[ 	\|f\|_{s\mathcal{B}_\mu^{\rm aff}(B_X,Y)} := \sup_{\|x\|=1} \|f(\cdot\, x)\|_{s \mathcal B_\mu(\BB_1,Y)} 
 \]
where $ f(\cdot\, x): \BB_1 \to Y $ given by $ f(\cdot\, x)(\lambda) = f(\lambda x)$ for every $ \lambda \in \BB_1, $ and
\[ \|f(\cdot\, x)\|_{s \mathcal B_\mu^R(\BB_1,Y)} =\sup_{\lambda \in \BB_1}\mu(\lambda x)\|f'(\cdot\, x)(\lambda)\|. \]

It is easy to see that  $\|\cdot\|_{s\mathcal{B}_\mu^{\rm aff}(B_X,Y)} $ is a semi-norm on $ \mathcal B_\mu(B_X,Y). $ We denote
\[  \mathcal B_\mu^{\rm aff}(B_X,Y) := \{f \in \mathcal B_\mu (B_X,Y):\ \|f\|_{s\mathcal{B}_\mu^{\rm aff}(B_X,Y)}  <\infty\}. 
 \]
It is also easy to check that   $ \mathcal B_\mu^{\rm aff}(B_X,Y) $ is Banach under the  norm
\[\|f\|_{\mathcal B_\mu^{\rm aff}(B_X,Y)}:= \|f(0)\| + \|f\|_{s\mathcal B_\mu^{\rm aff}(B_X,Y)}.
\] 

We also define  little affine Bloch-type spaces   on the unit ball $ B_X $ as follows:
\[ \mathcal B_{\mu,0}^{\rm aff}(B_X,Y) := \big\{f \in \mathcal B_\mu^{\rm aff}(B_X,Y): \lim_{|\lambda| \to 1}\sup_{\|x\|=1}\mu(\lambda x)\|f'(\cdot\, x)(\lambda)\|=0\big\}. \]

As the  above, for $ \mu(z) = 1 - \|z\|^2 $ we use notation $ \mathcal B $ instead of $ \mathcal B_\mu. $
 \end{defn}

\begin{prop}\label{prop_OSm}
	Let $ f \in H(B_X,Y). $ The following are equivalent:
	\begin{enumerate}[\rm(1)]
		\item $ f \in \mathcal{B}_\mu^\nabla(B_X,Y); $
		\item $ \sup\{\|f_F\|_{\mathcal B_\mu^\nabla(\BB_{[F]},Y)}: \ F \subset \Gamma, |F| < \infty\}<\infty;$
\item $ \sup_{x \in OS_m}\|f_x\|_{\mathcal{B}_\mu^\nabla(\BB_m,Y)} < \infty  $ \ for every $ m \ge 2; $
\item There exists $ m \ge 2 $ such that $ \sup_{x \in OS_m}\|f_x\|_{\mathcal{B}_\mu^\nabla(\BB_m,Y)} < \infty.  $
	\end{enumerate}
Moreover, for each $ m\ge 2 $
\begin{equation}\label{eq_Prop4.1}
\|f\|_{s\mathcal{B}_\mu^\nabla(B_X,Y)} = \sup_{|F|<\infty}\|f_F\|_{s\mathcal{B}_\mu^\nabla(\BB_{[F]},Y)}= \sup_{x\in OS_m}\|f_x\|_{s\mathcal{B}_\mu^\nabla(\BB_m,Y)}. 
\end{equation}  
\end{prop}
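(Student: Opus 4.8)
The plan is to reduce everything to the three-way seminorm identity (\ref{eq_Prop4.1}); once that is in hand, the equivalence of (1)--(4) is immediate. Indeed, $f_F(0)=f(0)=f_x(0)$, so the full $\mathcal B_\mu^\nabla$-norms of $f_F$ and $f_x$ differ from the corresponding seminorms only by the fixed constant $\|f(0)\|$. Hence finiteness of any one of the three suprema in (\ref{eq_Prop4.1}) is equivalent to finiteness of $\|f\|_{s\mathcal B_\mu^\nabla(B_X,Y)}$, i.e. to (1); and since the $OS_m$-supremum will be shown to equal $\|f\|_{s\mathcal B_\mu^\nabla(B_X,Y)}$ for \emph{every} $m\ge 2$, conditions (3) and (4) are both equivalent to (1) as well. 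So my whole effort goes into (\ref{eq_Prop4.1}).

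The ``$\le$'' inequalities are the routine direction. First I would fix $x\in OS_m$ and $w\in\BB_m$ and set $z=\sum_{k=1}^m w_k x_k$. Because the $x_k$ are orthonormal, $\|z\|=\|w\|$, so radiality of $\mu$ gives $\mu(z)=\mu(w)$; combining this with (\ref{eq_2.1}), read as the assertion that $\nabla(u\circ f_x)(w)$ is the projection of $\nabla(u\circ f)(z)$ onto $\operatorname{span}\{x_k\}$ and hence has norm at most $\|\nabla f(z)\|$, yields $\mu(w)\|\nabla f_x(w)\|\le \mu(z)\|\nabla f(z)\|\le \|f\|_{s\mathcal B_\mu^\nabla(B_X,Y)}$. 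Taking suprema over $w$ and then $x$ bounds the $OS_m$-supremum by $\|f\|_{s\mathcal B_\mu^\nabla(B_X,Y)}$; the same computation with $x=\{e_k:k\in F\}$ bounds the finite-subset supremum.

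For the reverse inequalities I must show that restricting to finite-dimensional subspaces loses no gradient mass. I would fix $z\in B_X$ and $\varepsilon>0$ and pick $u_0\in Y'$, $\|u_0\|=1$, with $\|\nabla(u_0\circ f)(z)\|>\|\nabla f(z)\|-\varepsilon$. For the $OS_m$-statement ($m\ge 2$) I take $x_1$ to be a unit vector at which the functional $(u_0\circ f)'(z)\in X'$ attains its norm, so that $|(u_0\circ f)'(z)(x_1)|=\|\nabla(u_0\circ f)(z)\|$; since $z$ and $x_1$ span at most a two-dimensional subspace and $\dim X=\infty$, I may complete $x_1$ to a system $x=\{x_1,\dots,x_m\}\in OS_m$ whose span contains $z$. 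Writing $z=\sum w_k x_k$ with $\|w\|=\|z\|$, the $j=1$ term alone gives $\|\nabla f_x(w)\|\ge \|\nabla(u_0\circ f_x)(w)\|\ge|(u_0\circ f)'(z)(x_1)|>\|\nabla f(z)\|-\varepsilon$, whence $\mu(z)(\|\nabla f(z)\|-\varepsilon)\le \sup_{x\in OS_m}\|f_x\|_{s\mathcal B_\mu^\nabla(\BB_m,Y)}$. Letting $\varepsilon\to0$ and taking the supremum over $z$ completes the $OS_m$-identity. This is exactly where the hypothesis $m\ge2$ enters: one coordinate is spent aligning with the gradient direction and at least one more is needed to recapture $z$.

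The finite-subset ``$\ge$'' is the delicate step I expect to be the main obstacle, since coordinate axes cannot be aligned with an arbitrary gradient direction; here I would argue by approximation. Let $(F_n)$ be finite subsets of $\Gamma$ increasing to $\Gamma$. Then $z_{F_n}\to z$ in $X$, so continuity of the gradient of the holomorphic function $u_0\circ f$ gives $\nabla(u_0\circ f)(z_{F_n})\to\nabla(u_0\circ f)(z)$, and since the coordinate projections $P_{F_n}$ converge strongly to the identity, $\|P_{F_n}\nabla(u_0\circ f)(z_{F_n})\|\to\|\nabla(u_0\circ f)(z)\|$; continuity of $\mu$ gives $\mu(z_{F_n})\to\mu(z)$. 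Evaluating $\nabla(u_0\circ f_{F_n})$ at the coordinate vector of $z_{F_n}$ reproduces exactly $P_{F_n}\nabla(u_0\circ f)(z_{F_n})$, so $\sup_{|F|<\infty}\|f_F\|_{s\mathcal B_\mu^\nabla(\BB_{[F]},Y)}\ge \lim_n \mu(z_{F_n})\|P_{F_n}\nabla(u_0\circ f)(z_{F_n})\|=\mu(z)\|\nabla(u_0\circ f)(z)\|>\mu(z)(\|\nabla f(z)\|-\varepsilon)$. As before, $\varepsilon\to0$ and the supremum over $z$ finish (\ref{eq_Prop4.1}). The one point demanding genuine care is the convergence $\|P_{F_n}\nabla(u_0\circ f)(z_{F_n})\|\to\|\nabla(u_0\circ f)(z)\|$, which combines a moving vector with a merely strongly (not norm-) convergent family of projections, and which I would control via $\|P_{F_n}a_n-a\|\le\|a_n-a\|+\|P_{F_n}a-a\|$.
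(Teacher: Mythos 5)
Your proof is correct, and while it follows the same overall two-sided-estimate strategy as the paper (the ``$\le$'' direction is identical), it executes both ``$\ge$'' steps differently --- and, in fact, correctly, where the paper's own write-up is defective. The paper's ``$\ge$'' directions rest on the displayed identities (\ref{eq_2.1}) and (\ref{norm_z_x}), i.e.\ on the claim that for an \emph{arbitrary} completion $x=(z/\|z\|,x_2,\ldots,x_m)\in OS_m$ one has $\|\nabla f_x(z_{[m]})\|=\|\nabla f(z)\|$; as your projection reading shows, only ``$\le$'' holds in general (take $Y=\C$, $f(z)=\langle z,e_1\rangle\langle z,e_2\rangle$, $z=re_1$ and $x=(e_1,e_3,\ldots,e_{m+1})$: then $f_x\equiv0$ while $\|\nabla f(z)\|=r$). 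Your construction --- spend $x_1$ on a norm-attaining unit vector for $(u_0\circ f)'(z)$, spend $x_2$ on recapturing $z$, then let $\varepsilon\to0$ over the choice of $u_0$ --- is exactly the needed repair, and it also isolates where the hypothesis $m\ge2$ is genuinely used. Likewise for (2)$\Rightarrow$(1): the paper's partial-sum chain silently replaces the full gradient $\nabla(u\circ f)(s_n)$ by the restricted one $\nabla(u\circ f_{F_n})(z_{[F_n]})=P_{F_n}\nabla(u\circ f)(s_n)$, and your estimate $\|P_{F_n}a_n-a\|\le\|a_n-a\|+\|P_{F_n}a-a\|$ supplies precisely the justification missing there. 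Two cosmetic touch-ups to your write-up: if $\Gamma$ is uncountable there is no sequence of finite subsets increasing to $\Gamma$, so instead choose $F_n$ exhausting the (countable, by Bessel's inequality) supports of $z$ and of $\nabla(u_0\circ f)(z)$, which is all your limit argument requires; and in the $OS_m$ step one should record the trivial degenerate cases ($z=0$, or $z$ proportional to $x_1$, or vanishing gradient), where any orthonormal completion works.
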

\begin{proof}
	(1) $ \Rightarrow $ (2):  Let $ F  \subset \Gamma, $ $ |F| <\infty $   and $ z_{[F]} \in \BB_{[F]}. $ According to (\ref{eq_2.1}) 
	\[ \big\|\nabla f_F\big(z_{[F]}\big)\big\|  = \Big\|\nabla f\Big(\sum_{j \in F}z_je_j\Big)\Big\|. \]
	Denote $ \mu^{[F]} = \mu\big|_{\text{span}\{e_k, k \in F\}}. $ Since $ \big\|\sum_{j \in F}z_je_j\big\| = \big\|z_{[F]}\big\| $ we get 
	\begin{equation}\label{eq_Prop4.1a}\begin{aligned}
	\|f_F\|_{s\mathcal{B}_\mu^\nabla(\BB_{[F]},Y)}&= \sup_{z_{[F]} \in \BB_{[F]}}\mu^{[F]}(z_{[F]})\|\nabla f_F(z_{[F]})\| \\
	&\le \sup_{z \in B_X}\mu^{[F]}(z_{[F]})\Big\|\nabla f\Big(\sum_{j \in F}z_je_j\Big)\Big\|\\
	&\le \|f\|_{s\mathcal{B}_\mu^\nabla(B_X,Y)}.
	\end{aligned} 
\end{equation}
In particular, we obtain (2).

	(2) $ \Rightarrow $ (1):  Let $ z = \sum_{k\in \Gamma}z_ke_k. $ We denote the partial sums of this series by $ s_n. $ Because $ f $ is holomorphic, $ \frac{\partial f}{\partial z_j} $ are continuous. Then
	\[\begin{aligned}
\|\nabla f(z)\| &= \sup_{u\in Y', \|u\|=1}\|\nabla(u\circ f)(z)\| \\
&= \sup_{u\in Y', \|u\|=1}\lim_{n\to\infty}\|\nabla(u\circ f)(s_n)\| \\
&\le  \sup_{u\in Y', \|u\|=1}\sup_{F \subset \Gamma, |F|<\infty}\|\nabla(u\circ f_F)(z_{[F]})\|\\
&=\sup_{F \subset \Gamma, |F|<\infty}\|\nabla  f_F(z_{[F]})\|.
	\end{aligned}\]
	Then, it follows from the assumption (2) and $   \|z_{[F]}\| \le \|z\|,$  that
	\begin{equation}\label{eq_Prop4.1b}\begin{aligned}
		\mu^{[F]}(z_{[F]})\|\nabla f(z)\| &\le \mu^{[F]}(z_{[F]})\|\nabla f(z)\| \\
		&\le \sup_{F\subset \Gamma, |F| <\infty} \mu^{[F]}(z_{[F]})\|\nabla f_{F}(z_{[F]})\|  <\infty.
	\end{aligned}\end{equation}
Thus $ f \in \mathcal{B}_\mu^\nabla(B_X,Y). $

	(1) $ \Rightarrow $ (3): It is analogous to 	(1) $ \Rightarrow $ (2).
	
		(3) $ \Rightarrow $ (4): It is obvious.
		
			(4) $ \Rightarrow $ (1): Assume that there exists $ m \ge 2 $ such that $ \sup_{x\in OS_m}\|f_x\|_{\mathcal{B}(B_X,Y)}<\infty. $ We fix $ z\in B_X, $ $ z \neq 0. $ Consider $ x =(\frac{z}{\|z\|},x_2, \ldots, x_m) \in OS_m $ and put $ z_{[m]} := (\|z\|, 0,\ldots,0) \in \BB_m. $ Then $ \|z_{[m]}\| = \|z\| $ and
			\begin{equation}\label{norm_z_x} \big\|\nabla f_x(z_{[m]})\big\| = \Big\|\nabla f\Big(\sum_{k=1}^mz_kx_k\Big)\Big\| = \|\nabla f(z)\|. \end{equation}
		This implies that
		\begin{equation}\label{eq_Prop4.1c}\begin{aligned}
\|f\|_{s\mathcal{B}_\mu^\nabla(B_X,Y)} &= \sup_{z \in \B_X}\mu(z)\|\nabla f(z)\| \\
&\le \sup_{z \in \B_X}\mu(z_{[m]})\|\nabla f_x(z_{[m]})\| \\
&\le  \sup_{x \in OS_m}\|f_x\|_{\mathcal{B}(\BB_m,Y)} < \infty.
		\end{aligned}\end{equation} 
	Thus $ f \in \mathcal{B}_\mu^\nabla(B_X,Y). $
	
	On the other hand, it is obvious that  
	\begin{equation}\label{eq_Prop4.1d}
 \sup_{x \in OS_m}\|f_x\|_{\mathcal{B}_\mu^\nabla(\BB_m,Y)} \le \|f\|_{s\mathcal{B}_\mu^\nabla(B_X,Y)} \quad \forall m\ge 2.
	\end{equation}
Hence, we obtain (\ref{eq_Prop4.1}) from (\ref{eq_Prop4.1a}), (\ref{eq_Prop4.1b}), (\ref{eq_Prop4.1c}) and (\ref{eq_Prop4.1d}).
\end{proof}

\begin{rmk} The proposition is not true for the case $ m=1. $ 
		Indeed, let $ X $ be a Hilbert space with the orthonormal basis $ \{e_n\}_{n\ge1}. $ Consider   $ f: B_X \to \C $ given by
	\[ f(z) := \sum_{n=1}^\infty\frac{\langle z, e_n\rangle}{\sqrt{n}} \quad \forall z \in B_X. \]
 Then $ f $ is holomorphic on $ B_X $ because
 \[ \sum_{n=1}^\infty\dfrac{|\langle z, e_n\rangle|^2}{n} \le \sum_{n=1}^\infty|\langle z, e_n\rangle|^2 = \|z\|^2 < 1. \]
 For each $ x = \sum_{n=1}^\infty\langle x, e_n\rangle e_n \in OS_1 $ and for every $ z_{[1]} := z_1 \in \BB_1 $ we have
 \[ \nabla f_x(z_{[1]}) = \nabla f(z_1x_1) = \nabla f\Big(\sum_{n=1}^\infty\frac{\langle z_1x_1, e_n\rangle}{\sqrt{n}}\Big), \]
 and thus, since
 \[ \|\nabla f_x(z_{[1]})\|^2= |x_1|^2 \le 1\] 
 we get 
 \[ \sup_{x \in OS_1}\|f_x(z_{[1]})\|_{\mathcal B_\nabla(\BB_1)} = \sup_{x \in OS_1}(1-\|z_{[1]}\|^2) \|\nabla f_x(z_{[1]})\| \le1. \]
 
 However, $ f \not\in \mathcal{B}_\nabla(B_X)$ because for every $ z \in B_X $ we have
 \[ \|\nabla f(z)\|^2 = \sum_{n=1}^\infty\Big|\frac{\partial f}{\partial z_n}(z)\Big|^2  = \sum_{n=1}^\infty \frac{1}{n}. \]
\end{rmk}

\begin{prop}\label{prop_OSm_0}
	Let $ f \in H(B_X,Y). $ The following are equivalent:
	\begin{enumerate}[\rm(1)]
		\item $ f \in \mathcal{B}_{\mu,0}^\nabla(B_X,Y); $
		\item $\forall \varepsilon>0\ \exists \varrho>0\ \forall z \in B_X\ \text{with}\ \|z_{[F]}\|>\varrho$ for every $ F\subset \Gamma, |F|<\infty $
		 $$ \sup_{F\subset\Gamma, |F|<\infty}\mu(z_{[F]})\|\nabla f_F(z_{[F]})\|<\varepsilon;$$
		\item  $\forall \varepsilon>0\ \exists \varrho>0\ \forall z \in B_X\ \text{with}\ \|z_{[m]}\|>\varrho$ for every $ m\ge2 $
		$$\sup_{m\ge2} \sup_{x \in OS_m}\mu(z_{[m]})\|\nabla f_x(z_{[m]})\|<\varepsilon; $$
		\item  $\exists  m \ge 2 \ \forall \varepsilon>0\ \exists \varrho>0\ \forall z \in B_X\ \text{with}\ \|z_{[m]}\|>\varrho$
		$$\sup_{x \in OS_m}\mu(z_{[m]})\|\nabla f_x(z_{[m]})\|<\varepsilon.$$
	\end{enumerate}
\end{prop}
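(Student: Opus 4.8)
The plan is to run the proof of Proposition \ref{prop_OSm} essentially verbatim, replacing each finite supremum bound there by an $\varepsilon$--$\varrho$ statement of uniform boundary decay. The two structural ingredients are the norm identity \eqref{eq_2.1}, which gives $\|\nabla f_x(z_{[m]})\| = \|\nabla f(\sum_{k=1}^m z_k x_k)\|$ with $\|\sum_{k=1}^m z_k x_k\| = \|z_{[m]}\|$ for $x \in OS_m$, together with the radiality of $\mu$, so that $\mu(z_{[m]})$ depends only on $\|z_{[m]}\|$. Combined, they assert that the restricted quantity $\mu(z_{[m]})\|\nabla f_x(z_{[m]})\|$ equals $\mu(w)\|\nabla f(w)\|$ at a point $w$ of norm $\|z_{[m]}\|$; hence the decay of $\mu\|\nabla f\|$ as the norm tends to $1$ transfers back and forth between $f$ and its finite-dimensional slices. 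I expect the cyclic scheme $(1)\Rightarrow(2)$, $(1)\Rightarrow(3)\Rightarrow(4)\Rightarrow(1)$, $(2)\Rightarrow(1)$ to be the cleanest.

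For the easy implications: to prove $(1)\Rightarrow(2)$ and $(1)\Rightarrow(3)$, fix $\varepsilon>0$ and take $\varrho$ (which we may assume $\ge\delta$) from the definition of $\mathcal B_{\mu,0}^\nabla(B_X,Y)$; then for any $F$ with $\|z_{[F]}\|>\varrho$, respectively any $x\in OS_m$ with $\|z_{[m]}\|>\varrho$, identity \eqref{eq_2.1} places the argument at a point of norm $>\varrho$ in $B_X$, so $\mu(z_{[F]})\|\nabla f_F(z_{[F]})\|<\varepsilon$ is exactly the little-Bloch estimate evaluated there. The implication $(3)\Rightarrow(4)$ is trivial. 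For $(4)\Rightarrow(1)$ I would use \eqref{norm_z_x}: given $z\neq0$ put $x_1=z/\|z\|$, complete to $x\in OS_m$, and set $z_{[m]}=(\|z\|,0,\dots,0)$, so that $\|\nabla f_x(z_{[m]})\|=\|\nabla f(z)\|$ and $\|z_{[m]}\|=\|z\|$; feeding $\varepsilon$ into $(4)$ returns $\varrho$, and $\|z\|>\varrho$ then forces $\mu(z)\|\nabla f(z)\|<\varepsilon$, which is $(1)$.

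The genuinely delicate step is $(2)\Rightarrow(1)$. Here I would first reuse, from the proof of $(2)\Rightarrow(1)$ in Proposition \ref{prop_OSm}, the partial-sum inequality $\|\nabla f(z)\|\le\sup_{|F|<\infty}\|\nabla f_F(z_{[F]})\|$ coming from the continuity of the $\partial f/\partial z_k$, together with the monotonicity $\mu(z_{[F]})\ge\mu(z)$, valid whenever $\|z\|,\|z_{[F]}\|\in[\delta,1)$ since $\|z_{[F]}\|\le\|z\|$ and $\mu$ is decreasing on $[\delta,1)$ by \eqref{w_1}. These yield
\[ \mu(z)\,\|\nabla f(z)\| \;=\; \sup_{|F|<\infty}\mu(z)\,\|\nabla f_F(z_{[F]})\| \;\le\; \sup_{|F|<\infty}\mu(z_{[F]})\,\|\nabla f_F(z_{[F]})\|. \]
I would then split the supremum according to whether $\|z_{[F]}\|>\varrho$ or $\|z_{[F]}\|\le\varrho$: the first part is $<\varepsilon$ by hypothesis $(2)$, while for the second part \eqref{eq_2.1} identifies $\|\nabla f_F(z_{[F]})\|$ with $\|\nabla f(z_{[F]})\|$ at a point of norm $\le\varrho$, so it is dominated by $\|f\|_{s\mathcal B_\mu^\nabla(B_X,Y)}/\min_{[0,\varrho]}\mu$; since $\mu(z)\to0$ as $\|z\|\to1$, this contribution tends to $0$, and letting $\|z\|\to1$ closes the argument.

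The main obstacle I anticipate is precisely this splitting: controlling the slices $F$ with small $\|z_{[F]}\|$ requires that $f$ already belongs to the large space $\mathcal B_\mu^\nabla(B_X,Y)$, so that $\|\nabla f\|$ is bounded by $\|f\|_{s\mathcal B_\mu^\nabla}/\mu$ uniformly and the factor $\mu(z)\to0$ can absorb it. Thus the first thing to pin down is that each of $(2),(3),(4)$ forces $\sup_{|F|<\infty}\|f_F\|_{s\mathcal B_\mu^\nabla(\BB_{[F]},Y)}<\infty$ — taking $\varepsilon=1$ in the hypothesis bounds the slice quantities for $\|z_{[F]}\|>\varrho$, and the finite-dimensionality of each $\BB_{[F]}$ keeps the remaining relatively compact part bounded — whence $f\in\mathcal B_\mu^\nabla(B_X,Y)$ by Proposition \ref{prop_OSm}. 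Securing this uniform membership, and carefully treating the region $\|z_{[F]}\|<\delta$ where the monotonicity of $\mu$ is unavailable, is where the real work lies; the remaining estimates are routine once \eqref{eq_2.1} and radiality are in place.
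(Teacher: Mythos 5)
Most of your scheme coincides with the paper's: there, the implications $(1)\Rightarrow(2)\Rightarrow(3)\Rightarrow(4)$ are declared obvious, and $(4)\Rightarrow(1)$ is handled exactly by the construction you describe, taking $x=(z/\|z\|,x_2,\dots,x_m)\in OS_m$ and $z_{[m]}=(\|z\|,0,\dots,0)$ as in \eqref{norm_z_x}. The genuine problem sits in the step you add, $(2)\Rightarrow(1)$, and precisely at the point you yourself call ``where the real work lies''. Your splitting argument needs $f\in\mathcal B_\mu^\nabla(B_X,Y)$ beforehand, and you propose to obtain the required bound $\sup_{|F|<\infty}\|f_F\|_{s\mathcal B_\mu^\nabla(\BB_{[F]},Y)}<\infty$ by combining the $\varepsilon=1$ instance of (2) (which controls the region $\|z_{[F]}\|>\varrho$, uniformly in $F$) with ``the finite-dimensionality of each $\BB_{[F]}$'' (for the region $\|z_{[F]}\|\le\varrho$). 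The second half does not work: compactness of $\{\|z_{[F]}\|\le\varrho\}$ inside one fixed slice bounds $\mu\|\nabla f_F\|$ for that $F$ only, and nothing in your argument prevents these constants from blowing up as $F$ grows, whereas Proposition \ref{prop_OSm}(2) demands a single bound valid for all $F$ at once. For hypotheses (3) and (4), which you want to feed through the same step, compactness is not available at all: the union over $x\in OS_m$ of the slice regions $\{\sum_{k=1}^m z_kx_k:\ \|z_{[m]}\|\le\varrho\}$ is the entire ball of radius $\varrho$ in $X$, which is not relatively compact.

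The missing idea is the one-variable maximum principle applied inside each slice. Take $\varepsilon=1$ and the corresponding $\varrho_1$ (we may assume $\varrho_1\ge\delta$), and fix $\varrho'\in(\varrho_1,1)$. For a finite $F$, a point $z_{[F]}$ with $0<\|z_{[F]}\|\le\varrho_1$, a unit $u\in Y'$ and a unit direction $v$ in the slice, the function $\lambda\mapsto(u\circ f_F)'\big(\lambda z_{[F]}/\|z_{[F]}\|\big)(v)$ is holomorphic on $\BB_1$, so its modulus at $\lambda=\|z_{[F]}\|$ is at most its maximum over $|\lambda|=\varrho'$, and there hypothesis (2) gives the bound $1/\mu(\varrho')$; taking suprema over $u$, $v$, $z_{[F]}$ and $F$ yields $\sup_F\sup_{\|z_{[F]}\|\le\varrho_1}\|\nabla f_F(z_{[F]})\|\le 1/\mu(\varrho')$, which is exactly the uniformity you need, and the same argument inside the slices determined by $x\in OS_m$ settles (3) and (4). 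Once this is in place your splitting goes through, modulo two small repairs: the first ``$=$'' in your displayed chain must be ``$\le$'' (the slice gradient is evaluated at $z_F$, not at $z$, so only the partial-sum inequality from Proposition \ref{prop_OSm} is available), and the monotonicity $\mu(z)\le\mu(z_{[F]})$ may be invoked only on the part of the supremum where $\|z_{[F]}\|>\varrho\ge\delta$ --- which your split does respect. Finally, be aware that this gap is not an artifact of your route: the paper's one-line proof of $(4)\Rightarrow(1)$ produces only the decay estimate for $\|z\|>\varrho$ and is silent about membership of $f$ in $\mathcal B_\mu^\nabla(B_X,Y)$, and its claim that $(2)\Rightarrow(3)$ is obvious faces the same difficulty (basis slices versus arbitrary orthonormal slices); your proposal correctly isolates the real issue but does not close it.
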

\begin{proof}
	The implications (1) $\Rightarrow  $ (2) $\Rightarrow  $ (3) $\Rightarrow  $ (4) are obvious. 
	
	(4) $ \Rightarrow $ (1):  The proof is straight-forward by putting $ x \in OS_m $ and $ z_{[m]} \in \BB_m $ as in the proof of (4) $\Rightarrow  $ (1) in Proposition \ref{prop_OSm} for each $ z\in B_X $ with $ \|z\| >\varrho.$ 
\end{proof}
In the next proofs below we need the following lemma.
\begin{lem}\label{lem_3.2}
For every $ f \in \mathcal{B}_\mu^\nabla(B_X,Y) $ and $ x \in X $ with $ \|x\|=1 $ we have
\begin{equation}\label{eq_2.2}
	Rf(\lambda x) = \lambda f'(\cdot\, x)(\lambda)\quad \forall \lambda \in \BB_1
	\end{equation}
and
\begin{equation}\label{eq_2.3}
	f'(\cdot\, x)(\lambda)(\mu) = f'(\lambda x)(\mu x)\quad \forall \lambda, \mu \in \BB_1.
\end{equation}
\end{lem}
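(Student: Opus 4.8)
The plan is to reduce both identities to the holomorphic chain rule applied to the affine slice $\iota_x\colon \lambda \mapsto \lambda x$ from $\BB_1$ into $B_X$, together with the complex-linearity of the Fréchet derivative $f'(\lambda x)$.

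First I would record that $f(\cdot\, x) = f \circ \iota_x$, where $\iota_x(\lambda) = \lambda x$ is a complex-linear map $\C \to X$ with constant derivative $\iota_x'(\lambda)(h) = hx$. Since $f$ is holomorphic on $B_X$ and $\iota_x(\BB_1) \subset B_X$ (because $\|x\|=1$), the chain rule for Fréchet derivatives gives, for every $h \in \C$,
\[ (f(\cdot\, x))'(\lambda)(h) = f'(\lambda x)\big(\iota_x'(\lambda)(h)\big) = f'(\lambda x)(hx). \]
Reading $f'(\cdot\, x)(\lambda)$ as the linear map $(f(\cdot\, x))'(\lambda)\colon \C \to Y$ and evaluating at $h = \mu$ yields \eqref{eq_2.3} at once.

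For \eqref{eq_2.2} I would first recall that in the vector-valued setting the radial derivative is $Rf(z) = f'(z)(z)$; this is consistent with the scalar formula $R(u\circ f)(z) = \langle \nabla(u\circ f)(z), \overline{z}\rangle = (u\circ f)'(z)(z)$ for each $u \in Y'$. Taking $z = \lambda x$ and using the complex-linearity of $f'(\lambda x)$,
\[ Rf(\lambda x) = f'(\lambda x)(\lambda x) = \lambda\, f'(\lambda x)(x) = \lambda\, (f(\cdot\, x))'(\lambda)(1), \]
and since in \eqref{eq_2.2} the symbol $f'(\cdot\, x)(\lambda)$ denotes the scalar derivative $(f(\cdot\, x))'(\lambda)(1) \in Y$, the right-hand side is exactly $\lambda f'(\cdot\, x)(\lambda)$.

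The only point requiring genuine care --- and it is minor --- is the double use of the symbol $f'(\cdot\, x)(\lambda)$: in \eqref{eq_2.2} it is the element $(f(\cdot\, x))'(\lambda)(1) \in Y$, whereas in \eqref{eq_2.3} it is the linear map $(f(\cdot\, x))'(\lambda)$. One must check that the chain-rule computation is compatible with both readings, which it is, since the linear map $\C \to Y$ is determined by its value at $1$ via $h \mapsto h\,(f(\cdot\, x))'(\lambda)(1)$. Beyond this, everything is a direct application of the chain rule and linearity; note that the hypothesis $f \in \mathcal{B}_\mu^\nabla(B_X,Y)$ is used only to guarantee that $f$ is holomorphic on $B_X$, so I would simply invoke the Fréchet differentiability of $f$.
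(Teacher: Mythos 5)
Your proof is correct, and it reaches the two identities by a genuinely different route from the paper's own argument. The paper never invokes the chain rule as a known theorem: for \eqref{eq_2.3} it writes out the two difference quotients $\bigl(f(\cdot\,x)(\lambda+t\eta)-f(\cdot\,x)(\lambda)\bigr)/t$ and $\bigl(f(\lambda x+t\eta x)-f(\lambda x)\bigr)/t$, observes that they are literally the same quantity, and passes to the limit; and for \eqref{eq_2.2} it first reduces, via Bessel's inequality, to the case where $x$ has countably many nonzero Fourier coefficients and then runs a telescoping difference-quotient computation over those coordinates --- this detour is forced because the paper defines the radial derivative through the gradient $\nabla(u\circ f)$, i.e.\ in coordinates. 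You instead apply the Fr\'echet chain rule to $f\circ\iota_x$ with $\iota_x$ linear, and you replace the coordinate computation by the coordinate-free identity $Rf(z)=f'(z)(z)$; the consistency check you give, namely $u\bigl(f'(z)(z)\bigr)=(u\circ f)'(z)(z)=\langle\nabla(u\circ f)(z),\overline z\rangle=R(u\circ f)(z)$ for all $u\in Y'$, is exactly what legitimizes this, since the paper only ever defines the scalar quantities $R(u\circ f)(z)$ and the norm $\|Rf(z)\|$, never the vector $Rf(z)\in Y$ itself. What your route buys: it is shorter, it sidesteps the interchange of the infinite sum with the limit $t\to 0$ that the paper's telescoping argument leaves implicit, and it makes explicit the two readings (element of $Y$ versus linear map $\C\to Y$) of the symbol $f'(\cdot\,x)(\lambda)$, which the paper uses only tacitly when it concludes $f'(\cdot\,x)(\lambda)(\eta)=\eta f'(\cdot\,x)(\lambda)$. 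What the paper's route buys: it works from first principles (the bare limit definitions of the derivatives), so it presupposes no chain rule, and it stays aligned with the coordinate description of $R$ used throughout Section 4. Your closing remark is also accurate: only holomorphy (hence Fr\'echet differentiability, since such $f$ are locally bounded) is used, not the Bloch condition.
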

\begin{proof}
(i) First, it follows from the Bessel inequality that every $ x \in X $ has only a countable number of non-zero Fourier
coefficients $ \langle x, e_j\rangle. $ Indeed, for every $\varepsilon > 0 $ the set $ \{j \in \Gamma: \ |\langle x, e_j\rangle| > \varepsilon \} $ is finite. Then we still have $ x = \sum_{j \in \Gamma}\langle x, e_j\rangle e_j  =  \sum_{j \in \Gamma}x_j e_j  $ where the sum is in fact a countable one, and it is independent of the particular enumeration of the countable number of non-zero summands. Hence, we can write $ x = \sum_{j=1}^\infty x_je_j. $	
Then, by the definitions of $ f(\cdot\, x) $  and $ f'(\cdot\, x)(\lambda) $ we have
\[ \begin{aligned}
	&\Bigg\|\dfrac{1}{t}\sum_{k=1}^\infty\Bigg(f\Big(\sum\limits_{j=1}^k\lambda x_je_j + t\lambda x_ke_k + \sum\limits_{j=k+1}^\infty(\lambda+t\lambda)x_je_j\Big) -\\
	&\hskip2cm -f\Big(\sum\limits_{j=1}^k\lambda x_je_j  + \sum\limits_{j=k+1}^\infty(\lambda +t\lambda)x_je_j\Big)\Bigg)- \lambda f'(\cdot\, x)(\lambda)\Bigg\| =\\
	&=\Bigg\|\dfrac{f((\lambda+t\lambda)x) - f(\lambda x)}{h} - \lambda f'(\cdot\, x)(\lambda)\Bigg\| = \\
	&=\Bigg\|\dfrac{f(\cdot\, x)(\lambda +t\lambda) - f(\cdot\, x)(\lambda)}{t} - \lambda f'(\cdot\, x)(\lambda)\Bigg\| \to 0 \quad \text{as}\ t \to 0.
\end{aligned}
\]
Hence (\ref{eq_2.2}) is proved.

(ii) For $ \lambda, \eta \in \BB_1$ we have
\[ \begin{aligned}
	 \|\eta &f'(\cdot\, x)(\lambda) -f'(\lambda x)(\eta x)\|  =\\
	&= \Bigg\|\dfrac{f(\cdot\, x)(\lambda +t\eta) - f(\cdot\, x)(\lambda)}{t}-\eta f'(\cdot\, x)(\lambda)   -\dfrac{f(\lambda x + t\eta x) - f(\lambda x) }{t} + f'(\lambda x)(\eta x) \Bigg\| \\
	&\le  \Bigg\|\dfrac{f(\cdot\, x)(\lambda +t\eta) - f(\cdot\, x)(\lambda)}{t}-\eta f'(\cdot\, x)(\lambda)\Bigg\| + \Bigg\|\dfrac{f(\lambda x + t\eta x) - f(\lambda x) }{t} + f'(\lambda x)(\eta x) \Bigg\|\\
	&\to 0 \quad\text{as}\ t \to 0.
\end{aligned} \]
Then $ 	f'(\cdot\, x)(\lambda)(\eta) = \eta f'(\cdot\, x)(\lambda)=  f'(\lambda x)(\eta x), $ and (\ref{eq_2.3}) is proved.
\end{proof}
\begin{prop}\label{prop_4.3}
\begin{enumerate}[\rm(1)]
	\item The spaces 	$ \mathcal{B}_\mu^R(B_X,Y)$  and  $ \mathcal{B}_\mu^{\rm aff}(B_X,Y) $ coincide. Moreover,  \[ \|f\|_{s\mathcal{B}_\mu^R(B_X,Y)} \le \|f\|_{s\mathcal{B}_\mu^{\rm aff}(B_X,Y)}\lesssim \|f\|_{s\mathcal{B}_\mu^R(B_X,Y)}\quad \forall f \in \mathcal{B}_\mu^R(B_X,Y). \]
	\item The spaces 	$ \mathcal{B}_{\mu,0}^R(B_X,Y)$  and  $ \mathcal{B}_{\mu,0}^{\rm aff}(B_X,Y) $ coincide.
\end{enumerate}
\end{prop}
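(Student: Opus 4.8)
The plan is to reduce everything to the one--variable slice functions $g_x := f(\cdot\, x)$, $\|x\|=1$, and to exploit the identity (\ref{eq_2.2}) of Lemma \ref{lem_3.2} (whose proof uses only the holomorphy of $f$, not any Bloch membership), which gives
\[ \|Rf(\lambda x)\| = |\lambda|\,\|f'(\cdot\, x)(\lambda)\|, \qquad \lambda \in \BB_1, \ \|x\|=1. \]
Since $\mu$ is radial, $\mu(\lambda x)=\mu(|\lambda|)$ for $\|x\|=1$, and since every $z\in B_X\setminus\{0\}$ equals $\|z\|\,(z/\|z\|)$, the two semi-norms take the parallel forms
\[ \|f\|_{s\mathcal B_\mu^R(B_X,Y)} = \sup_{\|x\|=1}\ \sup_{\lambda\in\BB_1}\mu(|\lambda|)\,|\lambda|\,\|f'(\cdot\, x)(\lambda)\| \]
and
\[ \|f\|_{s\mathcal B_\mu^{\rm aff}(B_X,Y)} = \sup_{\|x\|=1}\ \sup_{\lambda\in\BB_1}\mu(|\lambda|)\,\|f'(\cdot\, x)(\lambda)\|, \]
respectively. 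Thus the affine semi-norm differs from the radial one precisely by the factor $|\lambda|\le1$ inside the supremum, and the whole proposition becomes the problem of recovering that factor.

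For part (1), the inequality $\|f\|_{s\mathcal B_\mu^R}\le\|f\|_{s\mathcal B_\mu^{\rm aff}}$ is now immediate, as $|\lambda|\le1$ only decreases the supremand. The converse is the crux: near $\lambda=0$ the factor $1/|\lambda|$ blows up, so one cannot simply divide. Here I would fix a unit vector $x$ and work with the $Y$--valued holomorphic slice $g_x$, whose derivative obeys $\mu(|\lambda|)\,|\lambda|\,\|g_x'(\lambda)\|\le M:=\|f\|_{s\mathcal B_\mu^R}$. For $|\lambda|\ge\tfrac12$ this yields $\mu(|\lambda|)\|g_x'(\lambda)\|\le M/|\lambda|\le 2M$ at once. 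For $|\lambda|<\tfrac12$ I would instead bound $\|g_x'\|$ by the maximum modulus principle for vector-valued holomorphic functions (applied through each $u\in Y'$ with $\|u\|=1$, since $|u\circ g_x'|$ is holomorphic): on the circle $|w|=\tfrac12$ one has $\|g_x'(w)\|\le M/(\tfrac12\,\mu(\tfrac12))$, whence $\|g_x'(\lambda)\|\le 2M/\mu(\tfrac12)$ for all $|\lambda|\le\tfrac12$; multiplying by $\sup_{[0,1/2]}\mu<\infty$ (finite because $\mu$ is continuous by Definition \ref{weight}) gives $\mu(|\lambda|)\|g_x'(\lambda)\|\le CM$ with $C$ depending only on $\mu$. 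All constants being independent of $x$, taking suprema yields $\|f\|_{s\mathcal B_\mu^{\rm aff}}\le C\|f\|_{s\mathcal B_\mu^R}$; the equivalence of semi-norms then forces $\mathcal B_\mu^R(B_X,Y)=\mathcal B_\mu^{\rm aff}(B_X,Y)$.

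For part (2) no interior estimate is needed, since both little-space conditions concern only $|\lambda|\to1$, where $1/|\lambda|$ stays bounded. If $f\in\mathcal B_{\mu,0}^{\rm aff}$, writing $z=\|z\|\,(z/\|z\|)$ and using $|\lambda|\le1$ gives $\mu(z)\|Rf(z)\|\le \sup_{\|x\|=1}\mu(\|z\|)\|f'(\cdot\, x)(\|z\|)\|\to0$ as $\|z\|\to1$, so $f\in\mathcal B_{\mu,0}^R$. Conversely, if $f\in\mathcal B_{\mu,0}^R$, then for $\|x\|=1$ and $w=\lambda x$ (so $\|w\|=|\lambda|$),
\[ \mu(|\lambda|)\|f'(\cdot\, x)(\lambda)\| = \frac{1}{|\lambda|}\,\mu(w)\|Rf(w)\| \le \frac{1}{|\lambda|}\ \sup_{\|w\|=|\lambda|}\mu(w)\|Rf(w)\|, \]
and as $|\lambda|\to1$ the supremum tends to $0$ (this is exactly the defining condition for $\mathcal B_{\mu,0}^R$, a limit as the norm tends to $1$ and hence uniform in the direction $w/\|w\|$) while $1/|\lambda|\to1$; thus the left-hand side tends to $0$ uniformly in $x$, i.e. $f\in\mathcal B_{\mu,0}^{\rm aff}$. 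Membership in the big spaces required for both inclusions is supplied by part (1).

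The main obstacle is the converse inequality in (1): transferring the uniform estimate across the point $\lambda=0$, where the radial weight $|\lambda|$ degenerates. The maximum-modulus argument on the holomorphic slices removes this degeneracy, and it is essential that the resulting constant depend only on $\mu$ (through $\mu(\tfrac12)$ and $\sup_{[0,1/2]}\mu$) and not on the direction $x$, so that the bound survives the supremum over the unit sphere.
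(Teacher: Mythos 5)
Your proof is correct, and its skeleton is the paper's: both arguments reduce to the slice functions via the identity $Rf(\lambda x)=\lambda f'(\cdot\, x)(\lambda)$ of Lemma \ref{lem_3.2} (you are right that its proof uses only holomorphy, so it applies before any Bloch membership is known), both obtain $\|f\|_{s\mathcal{B}_\mu^R}\le\|f\|_{s\mathcal{B}_\mu^{\rm aff}}$ at once from $|\lambda|\le 1$, and both prove the converse by treating separately $|\lambda|$ bounded away from $0$ (where one simply divides by $|\lambda|$) and $|\lambda|$ near $0$ (where the factor $|\lambda|$ degenerates). The genuine divergence is in that second region. The paper invokes local boundedness of $f'$ on a ball $\overline{B}(0,r)$, which produces constants $r$ and $M$ depending on $f$, and then records the conclusion as $\|f\|_{s\mathcal{B}_\mu^{\rm aff}}\le \frac1r\|f\|_{s\mathcal{B}_\mu^R}$; as written this yields the set equality of the two spaces, but not the uniform constant promised by the symbol $\lesssim$ (and the portion of the supremum bounded by $M$ is silently absorbed). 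You instead control $\|f'(\cdot\, x)(\lambda)\|$ for $|\lambda|\le\frac12$ by the maximum modulus principle on the fixed circle $|w|=\frac12$, where the radial seminorm itself supplies the bound $2\|f\|_{s\mathcal{B}_\mu^R}/\mu(\tfrac12)$; the resulting constant depends only on $\mu$, not on $f$ nor on the direction $x$, so your argument establishes the seminorm equivalence in the uniform sense that the statement actually asserts. That is a worthwhile strengthening. For part (2) the two proofs coincide in substance: one inclusion from the slice identity, the other from the near-boundary estimate where $1/|\lambda|$ stays bounded, with membership in the big spaces supplied by part (1).
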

\begin{proof}
(1)(i) Let $ f \in \mathcal{B}_\mu^{\rm aff}(B_X,Y). $ In order to prove $ f \in \mathcal{B}_R(B_X,Y) $ it suffices to show   that
\begin{equation}\label{eq_2.4}
	Rf(z) = \|z\|f'\Big(\cdot \dfrac{z}{\|z\|}\Big)(\|z\|) \quad \forall z \in B_X \setminus \{0\}.
\end{equation}
It is easy to see that (\ref{eq_2.4})  follows immediately  from (\ref{eq_2.2}) for $ y = \frac{z}{\|z\|} $ and $ \lambda = \|z\| $ for every $ z \in B_X \setminus \{0\}. $  Moreover, it follows from (\ref{eq_2.4}) that
\[ \|f\|_{s\mathcal{B}_\mu^R(B_X,Y)} \le \|f\|_{s\mathcal{B}_\mu^{\rm aff}(B_X,Y)}. \]

(ii) Let $ f \in \mathcal B_\mu^R(B_X,Y) $ and $ x \in X $ be such that $ \|x\|=1. $ Since $ f $ is holomorphic at $ 0 \in B_X, $  its derivative  $ f': B_X \to L(X,Y) $ is also holomorphic, and thus there are $ r \in (0,1) $ and $ M > 0 $ such that
\[ \|f'(z)\|_{L(X,Y)} \le M\quad \forall z \in \overline{B}(0,r) := \{u \in X:\ \|u\|\le r\}. \]
Then, by (\ref{eq_2.3}) we have
\[\begin{aligned}
\sup_{|\lambda|\le r}\mu(\lambda x)\|f'(\cdot\, x)(\lambda)\| &=  \sup_{|\lambda|\le r}\mu(\lambda x)\sup_{|\eta|\le1}\|f'(\cdot\, x)(\lambda)(\eta)\| \\
&= \sup_{|\lambda|\le r}\mu(\lambda x)\sup_{|\eta\le1}\|f'(\lambda x)(\eta x)\| \\
&\le \sup_{|\lambda|\le r}\mu(\lambda x)\|f'(\lambda x)\| \le M. 
\end{aligned} \]
For the case where $ \|z\|>r, $ by (\ref{eq_2.2}), (\ref{eq_2.3}) and  the monotony increases of the function $ \frac{1-t}{t} $   we have
\begin{equation}\label{eq_2.4a}
\begin{aligned}
\mu(\lambda x)|f'(\cdot\, x)(\lambda)| &= \mu(\lambda x)|(1-|\lambda|)|f'(\cdot\, x)(\lambda)| +\mu(\lambda x)||\lambda||f'(\cdot\, x)(\lambda)| \\
&\le \mu(\lambda x)|(1-|\lambda|)\dfrac{1-r}{r}|f'(\cdot\, x)(\lambda)| +\mu(\lambda x)|\|Rf(\lambda x)\| \\
&\le \Big(\mu(\lambda x)\dfrac{1-r}{r} +\mu(\lambda x)\Big)\|Rf(\lambda x)\|.
\end{aligned} 
\end{equation}  
This implies that
\[ \sup_{|\lambda| >r}\mu(\lambda x)|f'(\cdot\, x)(\lambda)| \le \dfrac{1}{r}\sup_{z\in B_X}\mu(z)\|Rf(z)\|. \] 
Therefore, $ f \in \mathcal{B}_\mu^{\rm aff}(B_X,Y),$ and we also obtain $ \|f\|_{s\mathcal{B_\mu}^{\rm aff}(B_X,Y)} \le \frac1r\|f\|_{s\mathcal{B}_\mu^R(B_X,Y)}. $

(2) Let $ f \in \mathcal{B}_{\mu,0}^{\rm aff}(B_X,Y). $ Then,  using (\ref{eq_2.4}) it is easy to see that $ f \in \mathcal{B}_{\mu,0}^R(B_X,Y). $ In the converse direction, it follows from (\ref{eq_2.4a}) that $ f \in \mathcal{B}_{\mu,0}^{\rm aff}(B_X,Y)$ if $ f \in \mathcal{B}_{\mu,0}^R(B_X,Y). $
\end{proof}

Next, we will compare the spaces $ \mathcal{B}_\mu^\nabla(B_X,Y) $ and $ \mathcal{B}_\mu^R(B_X,Y). $

We need a vector-valued version of Lemma 4.11 in \cite{Ti1}. First we note that
\begin{equation}\label{eq_2.5} f \in \mathcal{B}_\mu(\BB_1,Y) \quad\text{if and only if}\quad u \circ f \in \mathcal{B}_\mu(\BB_1) \  \text{for all} \ u  \in Y'
\end{equation}
and, interchanging the suprema, we have that
\begin{equation}\label{eq_2.6}
	\|f\|_{\mathcal{B}_\mu^\nabla(\BB_1,Y)} \asymp \sup_{\|u \|=1}\|u \circ f\|_{\mathcal{B}_\mu^\nabla(\BB_1)}.
\end{equation}
\begin{lem}\label{lem_Ti_4.11}
	Let $ f \in \mathcal{B}^{\rm aff}(\BB_2, Y). $ If there exists $ M>0 $ such that for any $ x=(x_1,x_2) \in \BB_2, $ the function $ f(\cdot\, x)(\lambda) $  satisfies $ \|f(\cdot\, x)\|_{s\mathcal{B}^{\rm aff}(\BB_1,Y)}  \le M,$ then
	\begin{equation}\label{eq_2.7}
		\mu((x_1,0))\|\nabla f(x_1,0)\| \le 2\sqrt{2}MR_\mu   \quad \forall x_1 \in \C, |x_1|<1
		\end{equation}  
		where $ R_\mu := 1 + \max_{t\in [0,\delta]}\mu(t)\int_0^\delta\frac{dt}{\mu(t)}. $
\end{lem}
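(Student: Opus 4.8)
\emph{Overall plan.} Since $\|\nabla f(z)\|=\sup_{\|u\|=1}\|\nabla(u\circ f)(z)\|$ and, for $\|u\|=1$, the scalar function $u\circ f$ inherits the hypothesis with the \emph{same} bound $M$ (because $\|u\circ g\|_{s\mathcal B_\mu^{\rm aff}(\BB_1)}\le\|g\|_{s\mathcal B_\mu^{\rm aff}(\BB_1,Y)}$ for $\|u\|=1$), it suffices to prove the estimate for scalar $f$ and then take the supremum over $u$. This is precisely where the statement becomes a normal-weight scalar version of \cite[Lemma 4.11]{Ti1}, which I would establish directly as follows. Fix $p=(x_1,0)$, set $t_0=|x_1|<1$, and write $\nabla f(p)=(\partial_1 f(p),\partial_2 f(p))$. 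Because $\|\nabla f(p)\|\le\sqrt2\,\max(|\partial_1 f(p)|,|\partial_2 f(p)|)$, the factor $\sqrt2$ in the conclusion will arise from handling the two coordinates separately.

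\emph{The radial coordinate.} The hypothesis is in fact only a bound on the \emph{radial} derivative: for $x\in\BB_2$ the slice $f(\cdot\,x)$ feels only the derivative of $f$ in the radial direction $x$, and letting the length of $x$ tend to $1$ in $(\ref{eq_2.2})$ gives $\mu(\|w\|)\,\|Rf(w)\|\le M\|w\|\le M$ for every $w\in\BB_2$. At $p$ one has $Rf(p)=x_1\partial_1 f(p)$, whence $\mu(t_0)\,|\partial_1 f(p)|\le M\le MR_\mu$, which disposes of the first coordinate.

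\emph{The transverse coordinate (the crux).} To bound $\partial_2 f(p)$ I would use the one-variable holomorphic function $\zeta\mapsto f(x_1,\zeta)$ on the disc $|\zeta|<\rho_0:=\sqrt{1-t_0^2}$ together with a Cauchy estimate on a circle $|\zeta|=r<\rho_0$, which reduces matters to the transverse increment $f(x_1,\zeta)-f(x_1,0)$. Writing this increment as a radial integral, $f(x_1,\zeta)-f(x_1,0)=\int_0^1[\Psi(\tau,\zeta)-\Psi(\tau,0)]\,d\tau$ with $\Psi(\tau,\zeta)=\tfrac{d}{d\tau}f(\tau x_1,\tau\zeta)=\tfrac1\tau Rf(\tau x_1,\tau\zeta)$, the radial bound yields $\|\Psi(\tau,\zeta)\|\le M/\mu\big(\tau\sqrt{t_0^2+|\zeta|^2}\big)$. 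Since $\zeta\mapsto\Psi(\tau,\zeta)-\Psi(\tau,0)$ is holomorphic and vanishes at $\zeta=0$, the Schwarz lemma in $\zeta$ (on a disc whose radius is comparable to $\rho_0$) makes this increment \emph{linear} in $\zeta$; the resulting factor $|\zeta|$ cancels the Cauchy factor $1/r$. After the substitution $s=\tau\sqrt{t_0^2+|\zeta|^2}$ and multiplication by $\mu(t_0)$, normality is used to balance the Schwarz radius against the weight, leaving a quantity bounded by a constant multiple of $\mu(r')\int_0^{r'}\frac{ds}{\mu(s)}$ for some $r'<1$. The decisive elementary fact is
\[
\mu(r')\int_0^{r'}\frac{ds}{\mu(s)}\le R_\mu\qquad(0\le r'<1),
\]
which follows from normality: on $[\delta,r']$ the monotonicity $(\ref{w_1})$ gives $\mu(s)\ge\mu(r')$, so $\mu(r')\int_\delta^{r'}\frac{ds}{\mu(s)}\le r'-\delta\le1$, while $\mu(r')\int_0^\delta\frac{ds}{\mu(s)}\le\max_{[0,\delta]}\mu\int_0^\delta\frac{ds}{\mu(s)}$; adding these gives exactly $R_\mu$. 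Combining the two coordinates then yields $\mu(p)\,\|\nabla f(p)\|\le2\sqrt2\,MR_\mu$.

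\emph{Where the difficulty lies.} The entire weight of the proof sits in the transverse estimate. The hypothesis controls only radial derivatives, and a naive triangle inequality through the origin for $f(x_1,\zeta)-f(x_1,0)$ fails: it does not see that this increment is $O(\zeta)$ and loses a power of $(1-t_0^2)$, producing a bound that blows up as $t_0\to1$. Holomorphy of the transverse slice, exploited through the Schwarz lemma, is precisely what restores the linear-in-$\zeta$ decay; the delicate point I expect to require the most care is choosing the Schwarz radius in balance with the degeneration of $\mu$ near the boundary, so that the final constant is the finite $R_\mu$ and not something unbounded in $t_0$.
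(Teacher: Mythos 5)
Your overall skeleton coincides with the paper's: reduce to the scalar functions $u\circ f$, get the first coordinate directly from the radial bound, and produce $R_\mu$ from the inequality $\mu(r)\int_0^r\frac{ds}{\mu(s)}\le R_\mu$, which you prove correctly and which the paper uses tacitly. (Ironically, the paper's own transverse estimate is exactly the ``naive triangle inequality through the origin'' that you dismiss: it applies the Cauchy formula to $w\mapsto f(x_1,w)$ and bounds $|f(x_1,w)-f(0)|+|f(0)-f(x_1,0)|$ by $2M\int_0^{\|\cdot\|}\frac{dt}{\mu(t)}$, on a circle of fixed radius $2^{-1/4}$.) The genuine gap in your proposal is that the Schwarz lemma buys nothing over that naive route, so the defect you correctly diagnose survives in your own argument. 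Indeed, Schwarz on the disc $|\zeta|<R$ gives $\|\Psi(\tau,\zeta)-\Psi(\tau,0)\|\le\frac{2M}{\mu(\tau\rho_R)}\cdot\frac{|\zeta|}{R}$ with $\rho_R=\sqrt{t_0^2+R^2}$; integrating in $\tau$ and letting $\zeta\to0$ yields $\|\partial_2f(x_1,0)\|\le\frac{2M}{R\,\rho_R}\int_0^{\rho_R}\frac{ds}{\mu(s)}$, which is (up to the harmless factor $\rho_R$) exactly what Cauchy's inequality gives without Schwarz: the factor $|\zeta|$ you gain is paid back by the $1/R$ in Schwarz's conclusion, and the constraint $R<\sqrt{1-t_0^2}$ remains.

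Consequently, after multiplying by $\mu(t_0)$, your argument leaves $\frac{2M\mu(t_0)}{R\rho_R}\int_0^{\rho_R}\frac{ds}{\mu(s)}$ with $R=c\sqrt{1-t_0^2}$. Normality, via (\ref{w_2}), only gives $\mu(t_0)\le\big(\tfrac{2}{1-c^2}\big)^b\mu(\rho_R)$, so the best this chain produces is of order $MR_\mu/\sqrt{1-t_0^2}$, unbounded as $t_0\to1$; your ``balance'' step, i.e.\ the claim that what remains is $\lesssim\mu(r')\int_0^{r'}\frac{ds}{\mu(s)}$, is equivalent to the extra hypothesis $\mu(t)\int_0^t\frac{ds}{\mu(s)}\lesssim\sqrt{1-t}$. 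That hypothesis holds for $\mu(t)=1-t^2$ (the integral is only logarithmic), which is why Timoney's argument, the paper's proof, and your sketch all succeed in that case; but it is \emph{not} a consequence of normality: $\mu(t)=(1-t)^{1/4}$ is normal in the sense of Definition \ref{weight} (take $\delta=0$, $a=1/8$, $b=1/2$), yet $\mu(t)\int_0^t\frac{ds}{\mu(s)}\asymp(1-t)^{1/4}\gg\sqrt{1-t}$, and for this weight your bound degenerates like $(1-t_0)^{-1/4}$. The missing idea is to run the one-variable estimate on the \emph{derivative} rather than on $f$: writing $z_1\,\partial_2f(z_1,0)=\int_0^{z_1}\partial_2(Rf)(t,0)\,dt$ and estimating $\partial_2(Rf)(t,0)$ by Cauchy on the $t$-dependent circles $|z_2|=r_0\sqrt{1-(t/|z_1|)^2}$ (which stay inside the ball of radius $|z_1|$, where $\mu\ge\mu(|z_1|)$), one only pays the integrable singularity $\int_0^r\frac{dt}{\sqrt{1-(t/r)^2}}=\frac{\pi r}{2}$, since the hypothesis bound $\|Rf\|\le M/\mu$ is pointwise and need not be integrated. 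This gives $\mu(|z_1|)\,\|\partial_2f(z_1,0)\|\le\frac{\pi}{2r_0}\sup_z\mu(z)\|Rf(z)\|$ for \emph{every} normal weight, and it is precisely the technique the paper itself employs in the proof of part (2) of Theorem \ref{thm_4.3} (the estimates (\ref{est_B_mu01})--(\ref{est_B_mu03})); that device, not the Schwarz lemma, is what closes the transverse estimate.
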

\begin{proof}
	Fix $ u  \in Y' $ with $ \|u\| = 1. $ By the hypothesis,  $ f(\cdot\, x) \in \mathcal{B}(\BB_1,Y). $ Then it follows from (\ref{eq_2.5}) that $ u \circ f(\cdot\, x) \in \mathcal{B}(\BB_1). $ 
	\[ \| u \circ f(\cdot\, x)\|_{s\mathcal{B}^\nabla} \le \|u\| \|f(\cdot\, x)\|_{s\mathcal{B}^{\rm aff}}\le M. \]
	First of all, the hypotheses imply that
	\[ \mu((x_1,0))\Big|\dfrac{\partial(u\circ f)}{\partial x_1}(x_1,0)\Big|\le M. \]
	and so it is sufficient to show that
	\[ \mu((x_1,0))\Big|\dfrac{\partial(u\circ f)}{\partial x_2}(x_1,0)\Big|\le   2\sqrt{2}M. \]
	Indeed, from the hypotheses, we have
	\[ |f(z)-f(0)| = \Big|\int_0^1 \langle \nabla f(tz),\overline{z}\rangle dt\Big|  \le M\int_0^{\|z\|}\dfrac{dt}{\mu(t)}.\]
	Then, using the Cauchy integral formula and a simple estimate, we obtain
	\[\begin{aligned}
		\mu((x_1,0))&\Big|\dfrac{\partial(u\circ f)}{\partial x_2}(x_1,0)\Big| \\
		&\le \mu((x_1,0))\dfrac{1}{2\pi}\int_{|w|=1/\sqrt[4]{2}}\dfrac{\|u\| |f(x_1,w) - f(0) + f(0)- f(x_1,0)|}{|w|^2}dw\\&\le \mu((x_1,0)) \dfrac{2M\int_0^{|x_1|}\frac{dt}{\mu(t)}}{2\pi }\int_{|w|=1/\sqrt[4]{2}}\dfrac{dw}{w^2} \le 2\sqrt{2}MR_\mu
	\end{aligned} \]
	as required.
\end{proof}

\begin{thm}\label{thm_4.3}
	\begin{enumerate}[\rm(1)]
		\item The spaces $ \mathcal{B}_\mu^\nabla(B_X,Y) $ and  $ \mathcal{B}_\mu^R(B_X,Y) $   coincide. Moreover,
		\[ \|f\|_{\mathcal B_\mu^R(B_X,Y)} \asymp \|f\|_{\mathcal B_\mu^\nabla(B_X,Y)}.  \]
			\item The spaces $ \mathcal{B}_{\mu,0}^\nabla(B_X,Y)$ and  $ \mathcal{B}_{\mu,0}^R(B_X,Y) $   coincide.
	\end{enumerate}
 \end{thm}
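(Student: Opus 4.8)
The plan is to establish both coincidences by separating an elementary inclusion, coming from the pointwise domination $\|Rf(z)\|\le\|\nabla f(z)\|$, from the substantial reverse inclusion, which I would reduce to the two-dimensional Timoney-type estimate of Lemma~\ref{lem_Ti_4.11}.

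For (1), the inclusion $\mathcal B_\mu^\nabla(B_X,Y)\subset\mathcal B_\mu^R(B_X,Y)$ is immediate: since $R(u\circ f)(z)=\langle\nabla(u\circ f)(z),\overline z\rangle$, Cauchy--Schwarz gives $\|Rf(z)\|\le\|\nabla f(z)\|\,\|z\|\le\|\nabla f(z)\|$, whence $\|f\|_{s\mathcal B_\mu^R(B_X,Y)}\le\|f\|_{s\mathcal B_\mu^\nabla(B_X,Y)}$. For the reverse inclusion I would start from $f\in\mathcal B_\mu^R(B_X,Y)$ and first invoke Proposition~\ref{prop_4.3}(1) to pass to the affine picture, obtaining $f\in\mathcal B_\mu^{\mathrm{aff}}(B_X,Y)$ with $M:=\|f\|_{s\mathcal B_\mu^{\mathrm{aff}}(B_X,Y)}\lesssim\|f\|_{s\mathcal B_\mu^R(B_X,Y)}$. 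By Proposition~\ref{prop_OSm} it then suffices to control $\sup_{x\in OS_2}\|f_x\|_{s\mathcal B_\mu^\nabla(\BB_2,Y)}$, so I fix $x=(x_1,x_2)\in OS_2$ and study $f_x\in H(\BB_2,Y)$.

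The decisive observation is that the affine seminorm of $f_x$ is inherited from that of $f$: for $y=(y_1,y_2)$ on the unit sphere of $\C^2$ one has $(f_x)(\cdot\,y)(\lambda)=f\bigl(\lambda(y_1x_1+y_2x_2)\bigr)=f(\cdot\,w)(\lambda)$ with $w:=y_1x_1+y_2x_2\in OS_1$, since $x_1,x_2$ are orthonormal; hence $\|(f_x)(\cdot\,y)\|_{s\mathcal B_\mu^{\mathrm{aff}}(\BB_1,Y)}\le M$ and Lemma~\ref{lem_Ti_4.11} applies to $f_x$ with this $M$. The point at which to evaluate the lemma is prescribed by the reduction \eqref{norm_z_x}: given $z\in B_X\setminus\{0\}$ I choose $x=(z/\|z\|,x_2)\in OS_2$, so that $\|\nabla f_x(\|z\|,0)\|=\|\nabla f(z)\|$ while $\mu((\|z\|,0))=\mu(\|z\|)=\mu(z)$ by radiality. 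Lemma~\ref{lem_Ti_4.11} then gives $\mu(z)\|\nabla f(z)\|=\mu((\|z\|,0))\|\nabla f_x(\|z\|,0)\|\le 2\sqrt2\,R_\mu M$, and taking the supremum over $z$ yields $\|f\|_{s\mathcal B_\mu^\nabla(B_X,Y)}\le 2\sqrt2\,R_\mu\,\|f\|_{s\mathcal B_\mu^{\mathrm{aff}}(B_X,Y)}\lesssim\|f\|_{s\mathcal B_\mu^R(B_X,Y)}$. Together with the easy inclusion this proves the coincidence and the equivalence $\|f\|_{\mathcal B_\mu^R}\asymp\|f\|_{\mathcal B_\mu^\nabla}$ in (1).

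For (2) the inclusion $\mathcal B_{\mu,0}^\nabla\subset\mathcal B_{\mu,0}^R$ follows at once from $\mu(z)\|Rf(z)\|\le\mu(z)\|\nabla f(z)\|\to0$. For the converse I would run the same transport to $OS_2$ but feed it into the little-oh characterizations, starting from $f\in\mathcal B_{\mu,0}^R=\mathcal B_{\mu,0}^{\mathrm{aff}}$ (Proposition~\ref{prop_4.3}(2)) and aiming at criterion (4) of Proposition~\ref{prop_OSm_0} with $m=2$. I expect the main obstacle to sit precisely here, because the constant $M$ in Lemma~\ref{lem_Ti_4.11} is global while the integral $\int_0^{\|z\|}dt/\mu(t)$ diverges as $\|z\|\to1$ when $a\ge1$. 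The remedy is a localized refinement of Lemma~\ref{lem_Ti_4.11}: in the basic estimate $|f(z)-f(0)|\le\int_0^{\|z\|}(\text{slice seminorm at radius }t)\,dt/\mu(t)$ one splits the integral at a radius $\varrho$ beyond which all slice seminorms are $<\varepsilon$, so that the tail contributes at most $\varepsilon\,\mu(\|z\|)\int_0^{\|z\|}dt/\mu(t)\le\varepsilon R_\mu$, while the head is $\le M\,\mu(\|z\|)\int_0^{\varrho}dt/\mu(t)\to0$ as $\|z\|\to1$ because $\mu(\|z\|)\to0$. Granting this, $\sup_{x\in OS_2}\mu(z_{[2]})\|\nabla f_x(z_{[2]})\|\to0$ as $\|z_{[2]}\|\to1$, and Proposition~\ref{prop_OSm_0} then gives $f\in\mathcal B_{\mu,0}^\nabla(B_X,Y)$, completing (2).
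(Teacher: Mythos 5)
Your part (1) is correct and is essentially the paper's own argument: both proofs obtain the trivial inclusion from $\|Rf(z)\|\le\|\nabla f(z)\|$, pass to the affine seminorm via Proposition~\ref{prop_4.3}(1), and then apply Lemma~\ref{lem_Ti_4.11} on the two-dimensional slice spanned by $z/\|z\|$ and an orthogonal unit vector, using radiality of $\mu$ and (\ref{norm_z_x}).

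Part (2) is where you depart from the paper, and the step you call a ``localized refinement of Lemma~\ref{lem_Ti_4.11}'' is a genuine gap, not a routine adjustment. Your $\varepsilon$-split controls $\mu(\|z\|)\,\|f(z)-f(0)\|$, but that is not the quantity the lemma needs: in its proof the difference $f(x_1,w)-f(x_1,0)$ enters a Cauchy integral over a circle $|w|=r$, which produces a factor $1/r$ together with the constraint $r<\sqrt{1-|x_1|^2}$ (the circle must lie in $\BB_2$). After that division the ``head'' of your split is at least
\[
\frac{M\,\mu(|x_1|)}{\sqrt{1-|x_1|^2}}\int_0^{\varrho}\frac{dt}{\mu(t)},
\]
and normality only gives $\mu(t)=o\big((1-t)^{a}\big)$ for some $a\in(0,1)$ that may well be smaller than $1/2$: for the normal weight $\mu(t)=(1-t)^{1/4}$ the displayed quantity grows like $(1-|x_1|)^{-1/4}$. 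So, granting all other details, your argument proves (2) only when $\mu(t)/(1-t)^{1/2}\to0$ (e.g.\ for the classical weight $1-t^2$), not for every normal weight, and no admissible choice of $r$ repairs this. (A secondary point: your tail estimate silently uses a comparison $\mu(|x_1|)\lesssim\mu\big(\sqrt{|x_1|^2+r^2}\big)$; this does follow from (\ref{w_2}) when $r\asymp\sqrt{1-|x_1|^2}$, but it is a further detail you would have to supply.)

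The paper's proof of (2) avoids this mechanism altogether, and this is the missing idea. For $f\in\mathcal B^{R}_{\mu,0}(B_X,Y)$ it applies the Cauchy formula not to $F(z_1,z_2)=(u\circ f)(z_1e_1+z_2e_2)$ but to its radial derivative $RF$, over the circles $|z_2|=r_0\sqrt{1-(t/r)^2}$, whose points remain in the annulus $\{r_0\le\|z\|<1\}$ where $\mu\,|R(u\circ f)|$ is already small; it then recovers $\partial F/\partial z_2(z_1,0)$ through the radial-integration identity $z_1\,\partial_2F(z_1,0)=\int_0^{z_1}\partial_2(RF)(t,0)\,dt$ (\cite[Lemma 6.4.5]{Ru}). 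This yields the quantitative bound (\ref{est_B_mu03}): for $\|z\|\ge r_0$ and $\|v\|=1$,
\[
\mu(z)\,|\langle\nabla (u\circ f)(z),v\rangle|\;\le\;\frac{\pi}{\sqrt2\,\delta}\,\sup_{r_0\le\|w\|<1}\mu(w)\,|R(u\circ f)(w)|,
\]
in which only the supremum of $\mu|Rf|$ over the outer annulus appears, so the little-oh conclusion is immediate for every normal weight (and the affine space is not needed at all in this part). To close your gap, replace the proposed refinement of Lemma~\ref{lem_Ti_4.11} by this radial-derivative Cauchy argument, or else restrict the class of weights for which you claim (2).
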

\begin{proof} (1) Let us show that $ \|f\|_{s\mathcal B^\nabla_\mu(B_X,Y)} \le 2\sqrt{2} R_\mu \|f\|_{s\mathcal B^{\rm aff}_\mu(B_X,Y)} $ and the result follows using  Prposition \ref{prop_4.3}.
	
	Fix $ u  \in Y' $ with $ \|u\| = 1.$  
	Let $ z \in B_X $ and $ v \in X $ with $ \|v\|= 1 $ be fixed. We may assume that $ \dim X \ge 2. $ 
	Then there exist orthonormal unit vectors $ e_1,e_2 \in X $ and $ s, t_1, t_2 \in \C $ with $ |s|<1 $ and $ |t_1|^2+|t_2|^2=1 $ such that $ z = se_1, $ $ v = t_1e_1+t_2e_2. $ For $ f \in \mathcal{B}_\mu^R(B_X,Y) $ put
	\[ F(z_1, z_2) = (u\circ f)(z_1e_1+z_2e_2), \quad (z_2,z_2) \in \BB_2. \]
	Then $ F \in H(B_X) $ and it is easy to check that $ F $ satisfies the assumptions of Lemma \ref{lem_Ti_4.11}. Then
 \[ \mu(z)|\nabla(u\circ f)(z)| = \mu(s)|\nabla(u \circ f)(se_1)| = \mu(s,0)|\nabla F(s,0)| \le 2\sqrt{2}MR_\mu,\]
 hence, $ \|f\|_{s\mathcal B^\nabla_\mu(B_X,Y)} \le 2\sqrt{2}R_\mu \|f\|_{s\mathcal B^{\rm aff}_\mu(B_X,Y)}$ as required.
 
 (2) Because $ \|Rf(z)\| < \|\nabla f(z)\|$ for every $ z \in B_X, $ it suffices to show that $ \mathcal B^R_{\mu,0}(B_X,Y) \subset \mathcal B^\nabla_{\mu,0}(B_X,Y). $ Let $ f \in \mathcal B^R_{\mu,0}(B_X,Y) $ 
 and consider the function $ F(z_1,z_2) $ defined in the proof of the part (1). It is clear that 
  $ RF(z_1,z_2) = R(u\circ f)(z_1e_2+z_2e_2), $ $ \frac{\partial F}{\partial z_1}(z_1,0) = \langle\nabla(u\circ f)(z_1e_1), e_1\rangle $ and  $ \frac{\partial F}{\partial z_2}(z_1,0) = \langle\nabla(u\circ f)(z_1e_1), e_2\rangle.$
  
  Fix $ r_0 \in (\delta,1)$ and assume that $ |z_1| =: r\ge r_0.  $ Since $ \delta <r_0< \sqrt{t^2 +r_0^2(1-(t/r)^2)} \le r $ for $ t \in [0,r] $ and $ \mu $ is strictly decreasing on $ [\delta, 1), $ we have
  \[ \mu(\sqrt{t^2 +r_0^2(1-(t/r)^2)}) \ge \mu(r), \quad  t \in [0,r]. \]
  Then, for $ t \in [0,r] $ by Cauchy's integral formula, we have
  \[ \begin{aligned}
  	\bigg|\frac{\partial(RF)}{\partial z_2}(t,0)\bigg|&= \bigg|\frac{1}{2\pi i}\int_{|z_2| = r_0\sqrt{1-(t/r)^2}}\frac{RF(t,z_2)dz_2}{z_2^2}\bigg| \\
  	&= \bigg|\frac{1}{2\pi}\int_{|z_2| = r_0\sqrt{1-(t/r)^2}}\frac{Rf(te_1+z_2e_2)dz_2}{z_2^2}\bigg| \\
  	&\le \frac{\max_{|z_2| = r_0\sqrt{1-(t/r)^2}}|Rf(te_1+z_2e_2)|}{r_0\sqrt{1-(t/r)^2}} \\
  	&\le \frac{\sup_{r_0\le\|z\|<1}\mu(z)|Rf(z)|}{\mu(r)r_0\sqrt{1-(t/r)^2}}.
  \end{aligned} \]
  Then, by this estimate and \cite[Lemma 6.4.5]{Ru}, for $ |z_1|=r\ge r_0 $ we have
  \[ \begin{aligned}
  	|z_1|\bigg|\frac{\partial F}{\partial z_2}(z_1,0)\bigg|&=\bigg|\int_0^r\frac{\partial(RF)}{\partial z_2}(t,0)dt\bigg| \\
  	&\le \int_0^r\frac{\sup_{r_0\le\|z\|<1}\mu(z)|Rf(z)|}{\mu(r)r_0\sqrt{1-(t/r)^2}} \\
  	&=\frac{\sup_{r_0\le\|z\|<1}\mu(z)|Rf(z)|}{\mu(r)r_0} \int_0^r\frac{dt}{\sqrt{1-(t/r)^2}} \\
  	&= \frac{\pi |z_1|}{2\mu(|z_1|)r_0}\sup_{r_0\le\|z\|<1}\mu(z)|Rf(z)|.
  \end{aligned} \]
It implies that  
\begin{equation}\label{est_B_mu01}
	\bigg|\frac{\partial F}{\partial z_2}(z_1,0)\bigg| \le  \frac{\pi |z_1|}{2\mu(|z_1|)\delta}\sup_{r_0\le\|z\|<1}\mu(z)|Rf(z)| \quad \text{for}\  |z_1|\ge r_0.
\end{equation}

On the other hand, we also have
\begin{equation}\label{est_B_mu02}
	\bigg|\frac{\partial F}{\partial z_1}(z_1,0)\bigg| =  \bigg|\frac{Rf(z_1e_1)}{z_1}\bigg| \le  \frac{1}{\mu(|z_1|)\delta}\sup_{r_0\le\|z\|<1}\mu(z)|Rf(z)| \quad \text{for}\  |z_1|\ge r_0.
\end{equation}
From (\ref{est_B_mu01}) and  (\ref{est_B_mu02})  we obtain
\begin{equation}\label{est_B_mu03} 
	\begin{aligned}
	\mu(z)|\langle\nabla f(z), v\rangle|&= \mu(s)|\langle\nabla f(se_1),t_1e_1 + t_2e_2\rangle| \\
	&= \mu(s)\bigg|t_1\frac{\partial F}{\partial z_1}(s,0) + t_2\frac{\partial F}{\partial z_2}(s,0) \bigg| \\
	&\le \mu(s)\bigg(\bigg|\frac{\partial F}{\partial z_1}(s,0)\bigg|^2 + \bigg|\frac{\partial F}{\partial z_2}(s,0)\bigg|^2\bigg)^{1/2} \\
	&\le \frac{\pi}{\sqrt{2}\delta}\sup_{r_0\le\|z\|<1}\mu(z)|Rf(z)|,\quad \|z\| \ge r_0, \|v\|=1.
\end{aligned} 
\end{equation}
Now, by the hypothesis, for every $ \varepsilon>0 $ we can find $ r_0 \in (\delta,1) $ such that $ \mu(z)\|Rf(z)\| < \varepsilon $ for $ \|z\| >r_0. $ Therefore, it follows from (\ref{est_B_mu03}) that $ \lim_{\|z\|\to1}\mu(z)\|\nabla f(z)\| =0, $ that means $ f \in  \mathcal B^\nabla_{\mu,0}(B_X,Y).  $
 \end{proof}

We can now combine the results of Proposition \ref{prop_4.3} and Lemma  \ref{lem_Ti_4.11} with   an argument analogous  to the Theorem 2.6 in \cite{BGM} and obtain the following theorem:
\begin{thm}\label{thm_main}
 The spaces $ \mathcal{B}_\mu^\nabla(B_X,Y), $ $ \mathcal{B}_\mu^R(B_X,Y) $ and $ \mathcal{B}_\mu^{\rm aff}(B_X,Y)$ coincide. The spaces $ \mathcal{B}_{\mu,0}^\nabla(B_X,Y), $ $ \mathcal{B}_{\mu,0}^R(B_X,Y) $ and $ \mathcal{B}_{\mu,0}^{\rm aff}(B_X,Y)$ coincide. Moreover,
 \[ \|f\|_{\mathcal B_\mu^R(B_X,Y)} \le \|f\|_{\mathcal B_\mu^\nabla(B_X,Y)} \le 2\sqrt{2}R_\mu\|f\|_{\mathcal B_\mu^{\rm aff}(B_X,Y)}.  \]
\end{thm}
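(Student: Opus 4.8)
The plan is to obtain the triple coincidence by composing the two pairwise identifications already at hand, and then to glue the corresponding seminorm estimates into the single displayed norm chain. No new geometric input is needed: the only genuinely nontrivial step — the two-dimensional slicing argument via the Cauchy estimate of Lemma \ref{lem_Ti_4.11} — has already been carried out inside Theorem \ref{thm_4.3}, so the present proof is essentially a bookkeeping argument that tracks constants and the boundary-vanishing condition.

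First I would settle the set-theoretic coincidence. Theorem \ref{thm_4.3}(1) gives $\mathcal{B}_\mu^\nabla(B_X,Y) = \mathcal{B}_\mu^R(B_X,Y)$ with equivalent norms, and Proposition \ref{prop_4.3}(1) gives $\mathcal{B}_\mu^R(B_X,Y) = \mathcal{B}_\mu^{\rm aff}(B_X,Y)$ with the two-sided estimate $\|f\|_{s\mathcal{B}_\mu^R} \le \|f\|_{s\mathcal{B}_\mu^{\rm aff}} \lesssim \|f\|_{s\mathcal{B}_\mu^R}$. Chaining these two identifications shows that all three spaces coincide and carry equivalent norms. For the little spaces, Theorem \ref{thm_4.3}(2) and Proposition \ref{prop_4.3}(2) give, respectively, $\mathcal{B}_{\mu,0}^\nabla = \mathcal{B}_{\mu,0}^R$ and $\mathcal{B}_{\mu,0}^R = \mathcal{B}_{\mu,0}^{\rm aff}$, whence the three little spaces coincide as well.

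Next I would assemble the displayed norm chain. The left inequality $\|f\|_{\mathcal{B}_\mu^R(B_X,Y)} \le \|f\|_{\mathcal{B}_\mu^\nabla(B_X,Y)}$ follows from the pointwise bound $\|Rf(z)\| \le \|\nabla f(z)\|\,\|z\| < \|\nabla f(z)\|$ recorded just after the definition of the radial-derivative norm: multiplying by $\mu(z)$ and taking the supremum over $z \in B_X$ yields $\|f\|_{s\mathcal{B}_\mu^R} \le \|f\|_{s\mathcal{B}_\mu^\nabla}$, and adding the common term $\|f(0)\|$ gives the full-norm inequality. The right inequality is precisely the seminorm estimate $\|f\|_{s\mathcal{B}_\mu^\nabla(B_X,Y)} \le 2\sqrt{2}\,R_\mu\,\|f\|_{s\mathcal{B}_\mu^{\rm aff}(B_X,Y)}$ produced in the proof of Theorem \ref{thm_4.3}(1). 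Since $R_\mu = 1 + \max_{t\in[0,\delta]}\mu(t)\int_0^\delta \frac{dt}{\mu(t)} \ge 1$, we have $2\sqrt{2}\,R_\mu > 1$, so from $\|f\|_{\mathcal{B}_\mu^\nabla} = \|f(0)\| + \|f\|_{s\mathcal{B}_\mu^\nabla} \le \|f(0)\| + 2\sqrt{2}\,R_\mu\,\|f\|_{s\mathcal{B}_\mu^{\rm aff}}$ I would absorb the factor to conclude $\|f\|_{\mathcal{B}_\mu^\nabla} \le 2\sqrt{2}\,R_\mu\,\|f\|_{\mathcal{B}_\mu^{\rm aff}}$.

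The step I expect to require the most care is not an estimate but the verification that each identification respects the boundary-vanishing condition $\lim_{\|z\|\to1}\mu(z)\|\lozenge f(z)\| = 0$ defining the little spaces; this is what makes the little-space statement more than a formal corollary of the big-space one. Here I would rely on the pointwise inequality \eqref{est_B_mu03} from Theorem \ref{thm_4.3}(2), which bounds $\mu(z)\|\nabla f(z)\|$ near the boundary by a supremum of $\mu(z)\|Rf(z)\|$ over $\|z\| \ge r_0$, and on the relations \eqref{eq_2.4} and \eqref{eq_2.4a} from Proposition \ref{prop_4.3}(2) linking the affine and radial-derivative seminorms. These pointwise controls transfer the vanishing of one quantity to the others, completing the coincidence of the little spaces.
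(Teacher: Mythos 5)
Your proposal is correct and follows essentially the same route as the paper: the paper's proof is the one-line remark that Theorem \ref{thm_main} follows by combining Proposition \ref{prop_4.3} and Lemma \ref{lem_Ti_4.11} (as in Theorem 2.6 of \cite{BGM}), and since Theorem \ref{thm_4.3} is precisely where that combination was carried out, your chaining of Theorem \ref{thm_4.3} with Proposition \ref{prop_4.3} is the same argument. Your explicit bookkeeping — the absorption of $\|f(0)\|$ using $2\sqrt{2}R_\mu \ge 1$ for the full-norm chain, and the transfer of the boundary-vanishing condition via \eqref{eq_2.4}, \eqref{eq_2.4a} and \eqref{est_B_mu03} for the little spaces — correctly fills in details the paper leaves implicit.
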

%


Next, we present a M\"obius invariant norm for the  Bloch-type space $W\mathcal{B}(B_X,Y). $

M\"obius transformations on a Hilbert space $ X $ are the mappings $ \varphi_a,  $ $ a \in B_X, $  defined as follows:
\begin{equation}\label{eq_Mobius}
 \varphi_a(z) = \frac{a - P_a(z) - s_aQ_a(z)}{1 - \langle z, a\rangle}, \quad z \in \B_X
\end{equation} 
where $s_a = \sqrt{1 - \|a\|^2},$   $P_a$ is the orthogonal projection from $X$ onto the one dimensional subspace $[a]$ generated by a, and $Q_a$ is the orthogonal projection from $X$ onto $X \ominus [a].$ It is clear that
$$ P_a(z) = \frac{\langle z, a\rangle}{\|a\|^2}a, \ (z \in X) \quad \text        {and} \quad Q_a(z) = z - \frac{\langle z, a\rangle}{\|a\|^2}a, \ (z \in B_X).  $$

When $a = 0,$ we simply define $\varphi _a(z)=-z.$ 
It is obvious that each $\varphi_a$  is a holomorphic mapping from $B_X$ into $X.$ 



We will also need the following facts about the pseudohyperbolic distance in $ B_X. $ It
is given by
\[ \varrho_X(x,y) := \|\varphi_{-y}(x)\| \quad \text{for any}\ x, y \in B_X. \]

For details concerning M\"obius transformations and the pseudohyperbolic distance we refer to the book of K. Zhu \cite{Zh}. 

%
%

It is well known that, in the case $ n\ge2, $ the equality $ \|f \circ \varphi\|_{\mathcal{B}^\nabla(\BB_n,Y)} =  \|f\|_{\mathcal{B}^\nabla(\BB_n,Y)} $ is   false.  Our goal is to  find a semi-norm on  $ \mathcal{B}(B_X,Y) $ which is invariant under the automorphisms
of the ball $ B_X. $

\begin{defn}
Let $ X $ be a complex Hilbert space, $ Y $ be a Banach space and $ f \in H(B_X,Y). $ Consider the invariant gradient norm
\[ \|\widetilde{\nabla}f(z)\| := \|\nabla(f \circ \varphi_z)(0)\|\quad\text{for any $z \in B_X. $}  \]
We recall the following result of Blasco and his colleagues in \cite{BGM}:
\begin{lem}[Lemma 3.5, \cite{BGM}]\label{lem_BGM}
	Let $  f\in H(B_X). $ Then
	\[ \|\widetilde{\nabla}f(z)\| = \sup_{w\neq 0}\dfrac{|\langle \nabla f(z), w\rangle| (1-\|z\|^2)}{\sqrt{(1-\|z\|^2)\|w\|^2 + |\langle w,z\rangle|^2}}. \]
\end{lem}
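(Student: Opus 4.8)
The plan is to unwind the definition $\|\widetilde{\nabla}f(z)\| = \|\nabla(f\circ\varphi_z)(0)\|$ by the chain rule and then reduce everything to a finite computation on the orthogonal splitting $X = [z]\oplus(X\ominus[z])$ carried out by the projections $P_z,Q_z$. The case $z=0$ is immediate, since $\varphi_0(w)=-w$ gives $\|\widetilde{\nabla}f(0)\|=\|\nabla f(0)\|$, which is also the value of the right-hand side; so I assume $z\neq 0$. Note $\varphi_z(0)=z$, so by the chain rule $(f\circ\varphi_z)'(0)=f'(z)\circ\varphi_z'(0)$, and since the gradient norm of a scalar holomorphic function equals the operator norm of its differential, $\|\widetilde{\nabla}f(z)\|=\|f'(z)\circ\varphi_z'(0)\|$.

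The first real step is to compute the derivative of the M\"obius map at the origin. Writing $\varphi_z=N/d$ with $N(w)=z-P_zw-s_zQ_zw$ and $d(w)=1-\langle w,z\rangle$ in (\ref{eq_Mobius}), the quotient rule at $w=0$ (where $N(0)=z$, $d(0)=1$) gives
\[ \varphi_z'(0)h = -P_zh - s_zQ_zh + \langle h,z\rangle z. \]
Using $\langle h,z\rangle z=\|z\|^2P_zh$ together with $s_z^2=1-\|z\|^2$, this collapses to the self-adjoint operator $\varphi_z'(0)=-s_z(s_zP_z+Q_z)$. Writing $f'(z)(v)=\langle v,\xi\rangle$ with $\xi$ the Riesz vector of $f'(z)$ (so $\|\xi\|=\|\nabla f(z)\|$), and using that $s_zP_z+Q_z$ is self-adjoint, I obtain
\[ \|\widetilde{\nabla}f(z)\|^2 = s_z^2\,\big\|(s_zP_z+Q_z)\,\xi\big\|^2 = (1-\|z\|^2)\big\|(s_zP_z+Q_z)\,\xi\big\|^2, \]
the last expansion using $P_z\xi\perp Q_z\xi$.

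Next I would evaluate the supremum on the right-hand side as a generalized Rayleigh quotient. Its denominator squared is the quadratic form $w\mapsto(1-\|z\|^2)\|w\|^2+|\langle w,z\rangle|^2=\langle Bw,w\rangle$ attached to the operator $Bw=(1-\|z\|^2)w+\langle w,z\rangle z$. Because $\langle w,z\rangle z=\|z\|^2P_zw$ and $(1-\|z\|^2)+\|z\|^2=1$, this operator is diagonal in the splitting, $B=P_z+(1-\|z\|^2)Q_z$, hence $B^{-1}=P_z+(1-\|z\|^2)^{-1}Q_z$. Substituting $v=B^{1/2}w$ turns the quotient into an ordinary Cauchy--Schwarz extremal problem, so
\[ \sup_{w\neq0}\frac{|\langle\nabla f(z),w\rangle|^2\,(1-\|z\|^2)^2}{\langle Bw,w\rangle} = (1-\|z\|^2)^2\,\langle B^{-1}\nabla f(z),\nabla f(z)\rangle, \]
and inserting the diagonal form of $B^{-1}$ and again using $(1-\|z\|^2)+\|z\|^2=1$ reduces this to $(1-\|z\|^2)\,\|(s_zP_z+Q_z)\nabla f(z)\|^2$, which has exactly the shape produced in the previous step.

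Thus both sides take the common form $(1-\|z\|^2)\,\|(s_zP_z+Q_z)\,\cdot\,\|^2$, the first evaluated at the Riesz vector $\xi$ of $f'(z)$ and the second at $\nabla f(z)$ itself. I expect the main obstacle to be conceptual bookkeeping rather than any single hard estimate: getting $\varphi_z'(0)$ exactly right (in particular the rank-one term $\langle h,z\rangle z$ coming from differentiating the denominator, which is what corrects the naive guess) and, more delicately, carrying the antilinear Riesz identification through the \emph{complex} inner product without a conjugation slip. Reconciling the two closed forms is precisely the step where this must be checked, and it is where the exact placement of the conjugate in the denominator $|\langle w,z\rangle|$ enters; this matching of the extremal vector of the Rayleigh quotient with the Riesz vector $\xi$ is where I would spend the most care.
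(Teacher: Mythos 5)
Since the paper itself gives no proof of this lemma --- it is quoted verbatim from Lemma 3.5 of \cite{BGM} --- your attempt can only be measured against the statement, and there the news is mixed. The skeleton of your computation is correct: $\varphi_z(0)=z$; the quotient rule gives $\varphi_z'(0)h=-P_zh-s_zQ_zh+\langle h,z\rangle z$, which collapses to $-s_z(s_zP_z+Q_z)h$; the chain rule yields $\|\widetilde{\nabla}f(z)\|=\|f'(z)\circ\varphi_z'(0)\|_{op}=s_z\|(s_zP_z+Q_z)\xi\|$ with $\xi$ the Riesz vector of $f'(z)$; and the Rayleigh-quotient evaluation of the supremum via $B=P_z+(1-\|z\|^2)Q_z$ is also right. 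Using $\|(s_zP_z+Q_z)v\|^2=\|v\|^2-|\langle v,z\rangle|^2$, your two closed forms read
\[ \|\widetilde{\nabla}f(z)\|^2=(1-\|z\|^2)\bigl(\|\nabla f(z)\|^2-|f'(z)(z)|^2\bigr),\qquad \mathrm{RHS}^2=(1-\|z\|^2)\bigl(\|\nabla f(z)\|^2-|\langle\nabla f(z),z\rangle|^2\bigr). \]

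The genuine gap is that you stop exactly at the step that decides the lemma, and that step cannot be deferred as ``bookkeeping'': it is the entire content of the identity. Under the conventions of the present paper --- inner product linear in the first slot (forced by holomorphy of $\varphi_a$ in (\ref{eq_Mobius})), and $\nabla f(z)=(\partial f/\partial z_k)_k$ the \emph{unconjugated} coefficient vector, so that $f'(z)(x)=\langle x,\overline{\nabla f(z)}\rangle$ --- one has $f'(z)(z)=\langle\nabla f(z),\overline z\rangle=Rf(z)$, whereas the right-hand side of the lemma involves $\langle\nabla f(z),z\rangle$. These moduli differ in general, so your two closed forms are \emph{not} equal, and the identity as literally stated in this paper's notation is false. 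Concretely, take $f(w)=w_1w_2$ and $z=(1/2,\,i/2,\,0,\dots)$: then $\|z\|^2=1/2$, $\nabla f(z)=(i/2,1/2)$, $Rf(z)=i/2$, $\langle\nabla f(z),z\rangle=0$, so the invariant gradient squared equals $\tfrac12\bigl(\tfrac12-\tfrac14\bigr)=\tfrac18$ while the supremum squared equals $\tfrac12\cdot\tfrac12=\tfrac14$. The identity holds precisely when $\nabla f(z)$ is read as the Riesz representer of $f'(z)$ (the conjugated coefficient vector), which is the convention of \cite{BGM} but not of this paper; with that reading your $\xi$ \emph{is} $\nabla f(z)$ and the two closed forms coincide trivially. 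So the conjugation issue you flagged and postponed is not a detail to be checked later: to complete the proof you must commit to the convention of \cite{BGM} (after which the matching is immediate), or else insert a conjugate into the statement. As written, your proof is incomplete at its crux --- though, in fairness, the mismatch originates in the paper, which quotes the lemma verbatim while defining the gradient differently.
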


	We define invariant semi-norm   as follows
	\[ 		\|f\|_{s \mathcal B^{\rm inv}(B_X,Y)}  := \sup_{z\in B_X}\|\widetilde\nabla f(z)\| =\sup_{z\in B_X}\sup_{u\in Y', \|u\|\le1}\|\widetilde\nabla(u\circ f)(z)\|. \]
  We denote
	\[  		\mathcal B^{\rm inv}(B_X,Y) := \{f \in \mathcal B (B_X,Y):\ \|f\|_{s \mathcal B^{\rm inv}(B_X,Y)}  <\infty\}. \]
	
	It is also easy to check that $  \mathcal B^{\rm inv}(B_X,Y) $   is Banach under the  norm 
	\[ 		\|f\|_{\mathcal B^{\rm inv}(B_X,Y)}  := \|f(0)\| + \|f\|_{s\mathcal B^{\rm inv}(B_X,Y)}. \] 
\end{defn}

Now, applying Theorem 3.8 in  \cite{BGM} to the functions $ u\circ f $ for every $ u\in Y' $ we obtain the following:
\begin{thm}\label{thm_main1}
	The spaces $ \mathcal{B}^\nabla(B_X,Y), $ and $ \mathcal{B}^{\rm inv}(B_X,Y) 
	$  coincide. Moreover,
	\[ \|f\|_{\mathcal B^\nabla(B_X,Y)} \le \|f\|_{\mathcal B^{\rm inv}(B_X,Y)} \lesssim \|f\|_{\mathcal B^\nabla(B_X,Y)}.  \]
\end{thm}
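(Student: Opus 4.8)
The plan is to reduce the whole statement to the scalar result, Theorem 3.8 of \cite{BGM}, by testing $f$ against functionals $u \in Y'$ and interchanging suprema. First I would record two ``linearization'' identities. Since $\|\nabla f(z)\| = \sup_{\|u\|=1}\|\nabla(u\circ f)(z)\|$ by definition, swapping the order of the suprema over $z \in B_X$ and over $u$ with $\|u\|=1$ gives
\[ \|f\|_{s\mathcal B^\nabla(B_X,Y)} = \sup_{\|u\|=1}\|u\circ f\|_{s\mathcal B^\nabla(B_X)}. \]
For the invariant semi-norm I would use $\|\widetilde\nabla f(z)\| = \|\nabla(f\circ\varphi_z)(0)\|$ together with the same definition of the $Y$-valued gradient applied to $f\circ\varphi_z$ at the origin, which yields $\|\widetilde\nabla f(z)\| = \sup_{\|u\|=1}\|\widetilde\nabla(u\circ f)(z)\|$; a second interchange of suprema then gives
\[ \|f\|_{s\mathcal B^{\rm inv}(B_X,Y)} = \sup_{\|u\|=1}\|u\circ f\|_{s\mathcal B^{\rm inv}(B_X)}. \]
This second identity is in fact already built into the definition of $\|\cdot\|_{s\mathcal B^{\rm inv}(B_X,Y)}$, so no extra work is needed there.

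For the forward inequality I would observe that it already holds pointwise. Writing $a = \nabla(u\circ f)(z)$ and decomposing a test vector $w$ into its components along and orthogonal to $z$, Lemma \ref{lem_BGM} lets one compute the supremum over $w$ explicitly, and since $1-\|z\|^2 \ge (1-\|z\|^2)^2$ one obtains $(1-\|z\|^2)\|\nabla(u\circ f)(z)\| \le \|\widetilde\nabla(u\circ f)(z)\|$ for every $z$. Taking the supremum over $z$ and then over $\|u\|=1$, and using the two identities above, gives $\|f\|_{s\mathcal B^\nabla(B_X,Y)} \le \|f\|_{s\mathcal B^{\rm inv}(B_X,Y)}$ with no constant.

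The substantive direction is the reverse estimate, and this is exactly where I would invoke the scalar theorem rather than argue pointwise: the pointwise ratio $\|\widetilde\nabla(u\circ f)(z)\|/\big((1-\|z\|^2)\|\nabla(u\circ f)(z)\|\big)$ can blow up as $\|z\|\to 1$, so only a global argument works. Applying Theorem 3.8 of \cite{BGM} to each scalar Bloch function $u\circ f \in H(B_X)$ gives $\|u\circ f\|_{s\mathcal B^{\rm inv}(B_X)} \lesssim \|u\circ f\|_{s\mathcal B^\nabla(B_X)}$, and the key point requiring care is that the implied constant is purely geometric and hence independent of the particular $u\circ f$, so it survives the supremum over $\|u\|=1$. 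Combined with the identities this yields $\|f\|_{s\mathcal B^{\rm inv}(B_X,Y)} \lesssim \|f\|_{s\mathcal B^\nabla(B_X,Y)}$. Finally, since both full norms carry the identical additive term $\|f(0)\|$, adding it to the two semi-norm estimates produces the displayed chain $\|f\|_{\mathcal B^\nabla(B_X,Y)} \le \|f\|_{\mathcal B^{\rm inv}(B_X,Y)} \lesssim \|f\|_{\mathcal B^\nabla(B_X,Y)}$, and the coincidence of the two spaces is then immediate because one norm is finite precisely when the other is. I expect the only real obstacle to be the uniformity of the scalar constant; everything else is bookkeeping of the suprema interchange.
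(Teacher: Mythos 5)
Your proposal is correct and follows essentially the same route as the paper, whose entire proof consists of applying Theorem 3.8 of \cite{BGM} to the scalar functions $u\circ f$, $u\in Y'$, and interchanging suprema, exactly as you do. The extra details you supply (the pointwise forward inequality via Lemma \ref{lem_BGM} and the observation that the scalar constant is independent of $u$, so it survives the supremum over $\|u\|=1$) simply make explicit what the paper leaves implicit.
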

Now let  $ W\subset Y' $ be a separating subspace of the dual $ Y'. $ 
Applying Proposition \ref{prop_4.3}, Theorems \ref{thm_main} and \ref{thm_main1} to  functions $w\circ f  $ for each $ f \in H(B_X,Y) $ and $ w\in W $ we obtain the equivalence of the norms in associated Bloch-type spaces:
\[ \|\cdot\|_{W\mathcal B_\mu^R(B_X)} \cong \|\cdot\|_{W\mathcal B_\mu^\nabla(B_X)} \cong \|\cdot\|_{W\mathcal B_\mu^{\rm aff}(B_X)},\]
\[ \|\cdot\|_{W\mathcal B^R(B_X)} \cong \|\cdot\|_{W\mathcal B^\nabla(B_X)} \cong \|\cdot\|_{W\mathcal B^{\rm aff}(B_X)} \cong \|\cdot\|_{W\mathcal B^{\rm inv}(B_X)}.\]


Hence, for the sake of simplicity, from now on we write $ \mathcal B_\mu $ instead of $ \mathcal B_\mu^R. $

\medspace
Now, we show that   $ W\mathcal{B}_\mu(B_X)(Y), $ $ W\mathcal{B}_{\mu,0}(B_X)(Y) $ 
 satisfy   (we1)-(we3).

We need the following lemma whose proof parallels that of Lemma 13 in \cite{SW} and will be omitted.

\begin{lem}\label{lem_weight} Let $ \mu $ be a   normal weight on $ B_X. $ Then there exists $C_\mu > 0 $ such that
	\[ C_\mu \le \frac{\mu(r)}{\mu(r^2)} \le 1\quad \forall r \in [0,1).\]
\end{lem}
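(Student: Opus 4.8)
The plan is to split $[0,1)$ at the threshold $\sqrt{\delta}$, extracting both bounds near the boundary directly from the monotonicity conditions \eqref{w_1} and \eqref{w_2}, and disposing of the remaining compact segment by continuity of $\mu$.

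First I would treat $r \in [\sqrt{\delta}, 1)$. For such $r$ one has $\delta \le r^2 \le r < 1$: indeed $\sqrt{\delta} \ge \delta$ because $\delta \in [0,1)$, and $r^2 \le r$ always, so both $r$ and $r^2$ lie in the interval $[\delta,1)$ on which the comparison functions are monotone. The upper bound comes from \eqref{w_1}: since $\mu(t)/(1-t)^a$ is decreasing and $r^2 \le r$, we get
\[ \frac{\mu(r)}{(1-r)^a} \le \frac{\mu(r^2)}{(1-r^2)^a}, \quad\text{whence}\quad \frac{\mu(r)}{\mu(r^2)} \le \Big(\frac{1-r}{1-r^2}\Big)^a = \frac{1}{(1+r)^a} \le 1, \]
using $1-r^2 = (1-r)(1+r)$ and $1+r \ge 1$. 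Symmetrically, the lower bound comes from \eqref{w_2}: since $\mu(t)/(1-t)^b$ is increasing,
\[ \frac{\mu(r^2)}{(1-r^2)^b} \le \frac{\mu(r)}{(1-r)^b}, \quad\text{whence}\quad \frac{\mu(r)}{\mu(r^2)} \ge \frac{1}{(1+r)^b} \ge \frac{1}{2^b}. \]
Thus $2^{-b} \le \mu(r)/\mu(r^2) \le 1$ holds throughout $[\sqrt{\delta},1)$.

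Next I would dispose of $r \in [0,\sqrt{\delta}]$. On this compact interval the map $r \mapsto \mu(r)/\mu(r^2)$ is continuous, since $\mu$ is continuous and strictly positive, and hence attains a finite, strictly positive minimum $c>0$. Taking $C_\mu := \min\{2^{-b}, c\}$ then yields the asserted lower bound on all of $[0,1)$, and combining the two ranges gives the global statement.

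The main obstacle is the upper bound $\le 1$ on the initial segment $[0,\sqrt{\delta}]$, where \eqref{w_1} is not available and $\mu$ need not be decreasing. The clean constant $1$ is furnished by \eqref{w_1} precisely on $[\sqrt{\delta},1)$, which is the range relevant to the applications where this lemma is invoked; on the remaining compact piece one either uses the global monotonicity of $\mu$ that holds when $\delta=0$, or absorbs the finite maximum of the (continuous) ratio into the constant. I expect this reconciliation of the sharp upper value with the behaviour away from the boundary to be the only delicate point, the rest being the routine monotonicity estimates above, exactly as in the proof of Lemma 13 in \cite{SW}.
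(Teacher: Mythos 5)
The paper offers no proof here to compare against: Lemma \ref{lem_weight} is stated with the remark that its proof ``parallels that of Lemma 13 in \cite{SW} and will be omitted.'' So your proposal must be judged on its own, and its core is correct — indeed it is the standard argument. On $[\sqrt{\delta},1)$ both $r$ and $r^2$ lie in $[\delta,1)$, so ($W_2$) gives the lower bound $\mu(r)/\mu(r^2)\ge\bigl((1-r)/(1-r^2)\bigr)^b=(1+r)^{-b}\ge 2^{-b}$, and ($W_1$) gives the upper bound $(1+r)^{-a}\le 1$ (the latter also follows from the observation, recorded after Definition \ref{weight}, that $\mu$ is decreasing on $[\delta,1)$). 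On the compact piece $[0,\sqrt{\delta}]$ the ratio $r\mapsto\mu(r)/\mu(r^2)$ is continuous and strictly positive, hence bounded below by some $c>0$, and $C_\mu:=\min\{2^{-b},c\}$ works. This establishes the lower bound on all of $[0,1)$, which is the only half of the lemma the paper ever uses: both the equicontinuity estimate in Proposition \ref{prop_e1e3} and inequality (\ref{eq_frac_nu}) invoke only $\mu(r)/\mu(r^2)\ge C_\mu$.

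The ``delicate point'' you flag — the upper bound $\le 1$ on $[0,\sqrt{\delta}]$ — is not a gap you could have closed: under the paper's definition of normality (monotonicity hypotheses only on $[\delta,1)$), that bound is false in general. Concretely, take $\delta=1/2$, $a=1/2$, $b=2$, and $\mu(t)=1+t$ on $[0,1/2]$, $\mu(t)=3(1-t)$ on $[1/2,1)$; this $\mu$ is positive and continuous, $\mu(t)/(1-t)^{1/2}=3\sqrt{1-t}$ decreases to $0$ and $\mu(t)/(1-t)^2=3/(1-t)$ increases to $\infty$ on $[1/2,1)$, so $\mu$ is normal, yet $\mu(1/2)/\mu(1/4)=(3/2)/(5/4)=6/5>1$ with $1/2<\sqrt{\delta}$. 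Hence the literal statement requires either $\delta=0$ (monotonicity on all of $[0,1)$) or the constant $1$ replaced by some $C'_\mu\ge 1$; your fallback of ``absorbing the finite maximum of the ratio into the constant'' proves exactly this corrected two-sided estimate $C_\mu\le\mu(r)/\mu(r^2)\le C'_\mu$, which suffices for every application in the paper. In short, your argument is correct and complete for everything that is actually true; the residual defect lies in the lemma's statement, and you were right to isolate it rather than paper over it.
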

%
\begin{prop}\label{prop_e1e3} Let
	  $W \subset Y'$ be a separating subspace. Let $ \mu $ be a   normal weight on $ B_X. $  Then $  \mathcal{B}_\mu(B_X), $ $  \mathcal{B}_{\mu,0}(B_X) $  satisfy (e1) and (e2), and hence,  $ W\mathcal{B}_\mu(B_X)(Y)$ and  $ W\mathcal{B}_{\mu,0}(B_X)(Y) $ satisfy  (we1)-(we3). \end{prop}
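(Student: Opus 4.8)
The plan is to verify the two defining conditions (e1) and (e2) directly for the scalar Bloch-type spaces $\mathcal{B}_\mu(B_X)$ and $\mathcal{B}_{\mu,0}(B_X)$, after which the assertion about $W\mathcal{B}_\mu(B_X)(Y)$ and $W\mathcal{B}_{\mu,0}(B_X)(Y)$ follows automatically from Proposition \ref{Prop2.1}, since that proposition establishes (we1)--(we3) for $WE(Y)$ whenever $E$ satisfies (e1)--(e2) and $W$ is separating. Thus the real content is the scalar case. Condition (e1) is immediate: a constant function $c$ has $\lozenge c \equiv 0$, so $\|c\|_{s\mathcal{B}_\mu} = 0$ and $\|c\|_{\mathcal{B}_\mu} = |c| < \infty$; moreover the constant also lies in the little space $\mathcal{B}_{\mu,0}$ since $\mu(z)\|\lozenge c(z)\| \equiv 0 \to 0$. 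So the bulk of the work is (e2).

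To establish (e2) I would show that the closed unit ball $B_{\mathcal{B}_\mu}$ is $\tau_{co}$-compact. First I would prove that $\tau_{co}$-boundedness holds on $B_{\mathcal{B}_\mu}$: for $f$ with $\|f\|_{\mathcal{B}_\mu} \le 1$, one controls $|f(z)|$ on any compact subset of $B_X$ by integrating the radial derivative estimate $\mu(w)|Rf(w)| \le 1$ along a segment from $0$ to $z$, giving a bound of the form $|f(z)| \le |f(0)| + \int_0^{\|z\|} \frac{dt}{\mu(t)}$, which is finite and locally uniform because $\mu$ is bounded away from $0$ on each $[0,r]$ with $r<1$. This yields a uniform bound for the family on compacta and, combined with a Montel-type normal-families argument for holomorphic functions on $B_X$, shows the family is relatively $\tau_{co}$-compact. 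The role of Lemma \ref{lem_weight} enters here in comparing $\mu(r)$ and $\mu(r^2)$, which one needs when passing between the value of $f$ at $z$ and the derivative bound that lives at a slightly smaller radius in the integral estimate.

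The main obstacle, and the step requiring care, is showing that $B_{\mathcal{B}_\mu}$ is $\tau_{co}$-\emph{closed}, i.e.\ that a $\tau_{co}$-limit of norm-bounded Bloch functions is again in the unit ball. Here I would use that $\tau_{co}$-convergence of holomorphic functions entails convergence of all derivatives uniformly on compacta (by Cauchy estimates in the infinite-dimensional ball), so if $f_n \to f$ in $\tau_{co}$ with $\|f_n\|_{\mathcal{B}_\mu}\le 1$, then for each fixed $z$ we have $\mu(z)|Rf(z)| = \lim_n \mu(z)|Rf_n(z)| \le 1$, and taking the supremum over $z$ gives $\|f\|_{s\mathcal{B}_\mu}\le 1$; together with $|f(0)|\le 1$ this places $f$ in the unit ball. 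This lower-semicontinuity of the Bloch norm under $\tau_{co}$ is the crux.

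Finally, for the little space $\mathcal{B}_{\mu,0}(B_X)$, I would note that it is a closed subspace of $\mathcal{B}_\mu(B_X)$, so its closed unit ball is a $\tau_{co}$-closed subset of the $\tau_{co}$-compact ball $B_{\mathcal{B}_\mu}$, hence itself $\tau_{co}$-compact; this gives (e2) for the little space without repeating the normal-families argument. With (e1)--(e2) verified for both scalar spaces, a direct appeal to Proposition \ref{Prop2.1} (using that $W$ is separating) yields (we1)--(we3) for both associated vector-valued spaces, completing the proof.
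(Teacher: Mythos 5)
Your treatment of the big space $\mathcal{B}_\mu(B_X)$ is sound and genuinely different from the paper's. The paper proves the same pointwise bound (\ref{eq_5.1}), but then establishes equicontinuity of the unit ball by a direct and rather heavy argument: reduction to finite-dimensional slices via Proposition \ref{prop_OSm}, Zhu's Bergman-metric estimate, Lemma \ref{lem_weight}, and normality of $\mu$, yielding a uniform Lipschitz bound $|f(z)-f(w)|\le \widehat{S}_\mu\|z-w\|$ on the ball. Your Montel-type shortcut can replace all of this, but only because of a point you should make explicit: in infinite dimensions, a family of holomorphic functions that is merely bounded on compact sets need not be locally bounded or equicontinuous (compact sets have empty interior, so Cauchy estimates cannot be run from boundedness on compacta alone). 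What saves your argument is that (\ref{eq_5.1}) bounds the unit ball uniformly on every ball $rB_X$, $r<1$, and from that the Cauchy estimates do give equicontinuity. Likewise, your claim that $\tau_{co}$-convergence forces convergence of \emph{all} derivatives uniformly on compacta is overstated in this setting; what is true, and all you need for lower semicontinuity of the seminorm, is pointwise convergence of radial (directional) derivatives, obtained from the one-variable Cauchy formula on the compact disks $\{z+tv:\ |t|\le\delta\}\subset B_X$. On the credit side, your explicit $\tau_{co}$-closedness step is something the paper omits entirely: its proof stops at ``pointwise bounded and equicontinuous'' and never checks that $\tau_{co}$-limits remain in the unit ball.

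The genuine gap is your treatment of the little space. The inference ``$\mathcal{B}_{\mu,0}(B_X)$ is a closed subspace of $\mathcal{B}_\mu(B_X)$, hence its closed unit ball is a $\tau_{co}$-closed subset of the compact ball $B_{\mathcal{B}_\mu}$'' is a non sequitur: norm-closedness says nothing about closedness in the much weaker topology $\tau_{co}$, and here it is in fact false. Take $g$ as in (\ref{test_g}) with $\nu=\mu$ and set $h(z):=\int_0^{z_1}g(t)\,dt$, $z_1=\langle z,e_1\rangle$. By (\ref{test_g2}) one has $h\in\mathcal{B}_\mu(B_X)$, while along $z=te_1$, $\mu(t)|Rh(te_1)|=t\,\mu(t)g(t)\ge C_1t$, so $h\notin\mathcal{B}_{\mu,0}(B_X)$. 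On the other hand, the dilates $h_r(z):=h(rz)$, $0<r<1$, satisfy $\mu(z)|Rh_r(z)|\le \mu(\|z\|)\,r\,g(r)\to 0$ as $\|z\|\to1$, so $h_r\in\mathcal{B}_{\mu,0}(B_X)$; their norms are bounded uniformly in $r$ (since $g$ is increasing and $\mu$ is essentially decreasing), and $h_r\to h$ uniformly on compact subsets of $B_X$. Thus the unit ball of $\mathcal{B}_{\mu,0}(B_X)$ is not $\tau_{co}$-closed, hence not $\tau_{co}$-compact, and your argument cannot be repaired: condition (e2) genuinely fails for the little space. You should be aware that the paper's own proof carries the same defect --- it dismisses the little space with ``since $\mathcal{B}_{\mu,0}(B_X)$ is a subspace of $\mathcal{B}_\mu(B_X)$ it suffices to check (e2) for $\mathcal{B}_\mu(B_X)$,'' which is exactly the same invalid transfer of ball-compactness to a subspace --- so your proposal is no worse than the paper here, but the step is a real gap in both, and any downstream use of (e2) for $\mathcal{B}_{\mu,0}$ (e.g.\ via Proposition \ref{Prop2.1}) needs a different justification or a reformulation of the claim.
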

\begin{proof}
It is obvious that $  \mathcal{B}_\mu(B_X), $ $  \mathcal{B}_{\mu,0}(B_X) $ satisfy (e1).

Because  $ \mathcal B_{\mu,0}(B_X) $ is the subspace of $  \mathcal B_\mu(B_X) $
it suffices to check (e2) for the space $ \mathcal B_\mu(B_X). $

 In order to prove (e2) holds for $  \mathcal{B}_\mu(B_X) $ we will show that  the closed unit ball $ U $ of $ \mathcal{B}_\mu(B_X) $ is pointwise bounded and equicontinuous.

(i) First we prove that $ U $ is pointwise bounded.  It suffices to prove that
\begin{equation}\label{eq_5.1}
	|f(z)| \le \max\bigg\{1,\int_0^{\|z\|}\frac{dt}{\mu(t)}\bigg\}\|f\|_{\mathcal{B}_\mu(B_X)}\quad \forall f \in \mathcal{B}_\mu(B_X), \forall z \in B_X.
\end{equation} 

Fix $ f \in \mathcal{B}_\mu(B_X)  $ and put $ g(z) = f(z)-f(0) $ for every $ z \in B_X. $
  Note that $g(0) = 0$ and $\|g\|_{\mathcal{B}_\mu (B_X)} = \|f\|_{s\mathcal{B}_\mu (B_X)}.$ 
As in Lemma \ref{lem_Ti_4.11} by Cauchy-Schwarz inequality  we have
\[
|g(z)| \le  
  \int_0^1\frac{\|f\|_{s\mathcal{B}_\mu (\B_X)}\|z\|}{\mu(tz)}dt =  \|f\|_{s\mathcal{B}_\mu (B_X)}\int_0^{\|z\|}\frac{dt}{\mu(t)} = \|g\|_{\mathcal{B}_\mu (B_X)}\int_0^{\|z\|}\frac{dt}{\mu(t)}.  
\]
Consequently,
\[\begin{aligned}
|f(z)|  &\le |f(0)| + |g(z)| \le |f(0)| +\|g\|_{\mathcal{B}_\mu (B_X)}\int_0^{\|z\|}\frac{dt}{\mu(t)} \\
&= \|f\|_{\mathcal{B}_\mu(B_X)} - \|g\|_{\mathcal{B}_\mu(B_X)} + \|g\|_{\mathcal{B}_\mu(B_X)}\int_0^{\|z\|}\frac{dt}{\mu(t)}\\
&= \|f\|_{\mathcal{B}_\mu(B_X)} + \bigg(\int_0^{\|z\|}\frac{dt}{\mu(t)} - 1\bigg)\|f\|_{s\mathcal{B}_\mu(B_X)}\\
&\le \max\bigg\{1,\int_0^{\|z\|}\frac{dt}{\mu(t)}\bigg\}\|f\|_{\mathcal{B}_\mu(B_X)}.
\end{aligned}  \]

(ii) Next, we   show that $ U $ is equicontinuous. 
 For each $ f \in U, $ by Proposition  \ref{prop_OSm} we can find $ m \ge 2 $ such that 
\[ \|f\|_{s\mathcal{B}_\mu (B_X)} = \sup_{y\in OS_m}\|f_y\|_{s\mathcal{B}_\mu (\BB_m)}. \]
Fix $ e_{[m]} = (e_1, \ldots, e_m) \in OS_m. $ Then, for every $ z =(z_k)_{k\in \Gamma},w=(w_{k})_{k\in \Gamma} \in B_X, $ we consider $ z_{[m]} := (z_1, \ldots, z_m), $ $ w_{[m]} := (w_1, \ldots, w_m). $ By Theorem 3.6 in \cite{Zh} and Lemma \ref{lem_weight}  we have
\[\begin{aligned}
|f_{e_{[m]}}(z_{[m]}) &- f_{e_{[m]}}(w_{[m]})| \le \beta(z_{[m]}, w_{[m]})\sup_{x_{[m]}\in \BB_m} \|\widetilde{\nabla} f_{e_{[m]}}(x_{[m]})\| \\
&\le   \beta(z_{[m]}, w_{[m]})\sup_{x_{[m]}\in \BB_m}\sup_{y\in \BB_m\setminus \{0\}}\dfrac{|\langle \nabla f_{e_{[m]}}(x_{[m]}), y\rangle| (1-\|x_{[m]}\|^2)}{\sqrt{(1-\|x_{[m]}\|^2)\|y\|^2 + |\langle y,x_{[m]}\rangle|^2}} \\
&\le \beta(z_{[m]}, w_{[m]}) C_\mu^{-1}\sup_{x_{[m]}\in \BB_m}\dfrac{\mu^{[m]}(\|x_{[m]}\|)|\nabla f_{e_{[m]}}(x_{[m]})\sqrt{1-\|x_{[m]}\|^2}}{\mu^{[m]}(\|x_{[m]}\|^2)} \\
&\le \beta(z_{[m]}, w_{[m]}) C_\mu^{-1}\|f_{e_{[m]}}\|_{\mathcal B_\mu (\BB_m)}\dfrac{\sqrt{1-\|x_{[m]}\|^2}}{\mu^{[m]}(\|x_{[m]}\|^2)}
\end{aligned} \]
where $ \beta $ is the Bergman metric  on $ \BB_m $ given by
\[ \beta(s,t) = \frac12\log\dfrac{1+|(\varphi_m)_s(t)|}{1-|(\varphi_m)_s(t)|} \]
with $ (\varphi_m)_s $  is the involutive automorphism of $ \BB_m $ that interchanges $ 0 $ and $ s.$

If $ \|x_{[m]}\|^2\le \delta $ it is clear that 
\[ \dfrac{\sqrt{1-\|x_{[m]}\|^2}}{\mu^{[m]}(\|x_{[m]}\|^2)} \le \dfrac{1}{m_{\mu,\delta}} <\infty, \]
where $ m_{\mu,\delta} = \min_{t \in [0,\delta]}\mu(t) >0; $  if $ \|x_{[m]}\|^2 >\delta $ and $ b\ge 1/2 $ we have
\[ \dfrac{\sqrt{1-\|x_{[m]}\|^2}}{\mu^{[m]}(\|x_{[m]}\|^2)}\le \dfrac{(1-\|x_{[m]}\|^2)^b}{\mu^{[m]}(\|x_{[m]}\|^2)} < S_\mu <\infty; \]
if $ \|x_{[m]}\|^2 >\delta $ and $ b< 1/2 $ we get
 \[ \dfrac{\sqrt{1-\|x_{[m]}\|^2}}{\mu^{[m]}(\|x_{[m]}\|^2)} =  \dfrac{(1-\|x_{[m]}\|^2)^b}{\mu^{[m]}(\|x_{[m]}\|^2)}(1-\|x_{[m]}\|^2)^{1/2-b} \le S_\mu (1-\delta)^{1/2-b}<\infty.\]
Consequently, 
\[ |f_{e_{[m]}}(z_{[m]}) - f_{e_{[m]}}(w_{[m]})| \le  \beta(z_{[m]}, w_{[m]})\widehat{S}_\mu \|f_{e_{[m]}}\|_{\mathcal B_\mu(\BB_m)} \]
where
\[\widehat{S}_\mu := C_\mu^{-1}\max\{m_{\mu,\delta}^{-1}, S_\mu(1-\delta)^{1/2-b}\}. \]

Since $ \beta(s,t) $ is the infimum of the set consisting of all $ \ell(\gamma) $ where $ \gamma $ is a piecewise smooth curve in $ \BB_m $ from $ s $ to $ t $ (see \cite[p. 25]{Zh}) we have
\[ |f_{e_{[m]}}(z_{[m]}) - f_{e_{[m]}}(w_{[m]})| \le \|z_{[m]} - w_{[m]}\|\widehat{S}_\mu\|f_{e_{[m]}}\|_{\mathcal{B}_\mu(\BB_m)} \le \widehat{S}_\mu\|z  - w \|.  \]
Consequently,
\[ 	|f(z) - f(w)| = \lim_{m\to\infty}|f_{e_{[m]}}(z_{[m]}) - f_{e_{[m]}}(w_{[m]})|\le \widehat{S}_\mu\|z-w\|.
 \]
 This yields that $ U $ is equicontinuous.
\end{proof}

\begin{rmk}\label{rmk_1}
	(1) In fact, the estimate  (\ref{eq_5.1}) can be written as follows
	\[ |f(z)| \le |f(0)| + \int_0^{\|z\|}\frac{dt}{\mu(t)}\|f\|_{s\mathcal B_\mu}.\]
%
%
	
	(2) It should be noted that, for $ \mu(z) = 1-\|z\|^2, $ (\ref{eq_5.1}) will be
	\begin{equation}\label{eq_5.1'}
		|f(z)| \le \max\Bigg\{1, \dfrac{1}{2}\log\dfrac{1+\|z\|}{1-\|z\|}\Bigg\}\|f\|_{\mathcal{B}(B_X)}\quad \forall f \in \mathcal{B}(B_X), \forall z \in B_X, 
	\end{equation} 
	and, therefore, by an easy calculation that $\sup_{x \in [0,1)}(1-x^2)\log\frac{1+x}{1-x} <1 $ we have
	\begin{equation}\label{eq_5.2}
		(1 - \|z\|^2)|f(z)| \le \|f\|_{\mathcal B(B_X)}\quad \forall f \in \mathcal{B}(B_X), \forall z \in B_X.
	\end{equation}
\end{rmk}
%
%
%

\section{The Test Functions and Auxiliary results}
\setcounter{equation}{0}
Thí section provides some preparations to study characterizations the boundedness and compactness of the weighed composition operators between (little) Bloch-type spaces.

In this section we consider  $ \nu $ is a   normal weight on $ B_X $ and $\varphi   \in S(B_X).$

We begin this section by  constructing  test functions that are  usefull for the proofs of our main results.

%

First we consider the holomorphic function 
\begin{equation}\label{test_g} g(z) := 1 + \sum_{k > k_0}2^kz^{n_k} \quad \forall z \in \BB_1
\end{equation}   
where $ k_0 = \big[\log_2\frac{1}{\nu(\delta)}\big], $ $ n_k = \big[\frac{1}{1-r_k}\big] $ with $ r_k = \nu^{-1}(1/2^k) $ for every $ k\ge 1. $ Here the symbol $ [x] $ means the greatest integer not more than $ x. $ 
By \cite[Theorem 2.3]{HW}, $ g(t) $ is increasing on $ [0,1) $ and
\begin{equation}\label{test_g1}
	|g(z)| \le g(|z|) \in \R\quad \forall z \in \BB_1,
\end{equation}
\begin{equation}\label{test_g2}
	0< C_1 :=\inf_{t\in [0,1)}\nu(t)g(t) \le \sup_{t\in [0,1)}\nu(t)g(t)  \le  \sup_{z \in \BB_1}\nu(z)|g(z)| =: C_2 <\infty. 
\end{equation}
\begin{prop}\label{est_g1}
	There exists positive constants $ C_3 $ such that the inequality
	\begin{equation}\label{test_g2a}
		\int_0^r g(t)dt \le C_3 \int_0^{r^2}g(t)dt
	\end{equation}
	holds for all $ r \in [r_1, 1), $ where $ r_1 \in (0, 1) $ is a constant such that
	$  \int_0^{r_1}g(t)dt = 1. $
\end{prop}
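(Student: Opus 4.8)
The plan is to work with the antiderivative $G(r):=\int_0^r g(t)\,dt$ and to prove the doubling-type estimate $G(r)\le C_3\,G(r^2)$ directly. The whole argument rests on two facts already available: the two-sided comparison $g\asymp 1/\nu$ from \eqref{test_g2} (which gives $C_1/\nu(t)\le g(t)\le C_2/\nu(t)$ for all $t\in[0,1)$), and the self-comparison $\nu(s^2)\asymp\nu(s)$ furnished by Lemma \ref{lem_weight}, which holds for any normal weight and in particular for $\nu$.

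First I would rewrite $G(r^2)$ as an integral over $[0,r]$ via the substitution $t=s^2$, $dt=2s\,ds$:
\[ G(r^2)=\int_0^{r^2}g(t)\,dt=2\int_0^r s\,g(s^2)\,ds. \]
The key pointwise bound is $g(s^2)\ge \tfrac{C_1C_\nu}{C_2}\,g(s)$ for every $s\in[0,1)$, where $C_\nu$ is the constant of Lemma \ref{lem_weight} for $\nu$. This is obtained by chaining three inequalities: $g(s^2)\ge C_1/\nu(s^2)$ from the lower estimate in \eqref{test_g2}; then $1/\nu(s^2)\ge C_\nu/\nu(s)$, coming from $\nu(s^2)\le \nu(s)/C_\nu$ (the lower inequality in Lemma \ref{lem_weight}); and finally $1/\nu(s)\ge g(s)/C_2$ from the upper estimate in \eqref{test_g2}. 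Substituting this into the integral yields $G(r^2)\ge 2\tfrac{C_1C_\nu}{C_2}\int_0^r s\,g(s)\,ds$.

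Next I would pass to a tail-versus-total estimate. Discarding the portion over $[0,r_1]$ and using $s\ge r_1$ on $[r_1,r]$ together with the normalization $G(r_1)=1$, I get
\[ \int_0^r s\,g(s)\,ds\ge r_1\int_{r_1}^r g(s)\,ds=r_1\big(G(r)-1\big). \]
Writing $M:=2C_1C_\nu r_1/C_2$, this gives $G(r)-1\le M^{-1}G(r^2)$. Since $r\ge r_1$ forces $r^2\ge r_1^2$ and $G$ is increasing, $G(r^2)\ge G(r_1^2)>0$, so $1\le G(r^2)/G(r_1^2)$; combining the two bounds,
\[ G(r)\le 1+M^{-1}G(r^2)\le\Big(\tfrac{1}{G(r_1^2)}+\tfrac{1}{M}\Big)G(r^2), \]
so that $C_3:=G(r_1^2)^{-1}+M^{-1}$ works for all $r\in[r_1,1)$.

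The only genuinely delicate point is the pointwise lower bound $g(s^2)\gtrsim g(s)$: it is exactly here that Lemma \ref{lem_weight} is indispensable, since it converts the harmless-looking $\nu(s^2)$ into a constant multiple of $\nu(s)$, after which the comparison $g\asymp 1/\nu$ closes the loop. The remaining ingredients — the quadratic substitution and the tail estimate normalized by $G(r_1)=1$ — are routine.
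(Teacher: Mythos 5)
Your proof is correct, but it takes a genuinely different route from the paper's. The paper argues by a case split: on the compact piece $[r_1,\delta^{1/4}]$ it simply uses that $\int_0^r g$ is bounded above and $\int_0^{r^2}g$ is bounded below by positive constants, while for $r\in(\delta^{1/4},1)$ it bounds the gap $\int_{r^2}^{r}g(t)\,dt$ by a constant multiple of $\int_{r^4}^{r^2}g(t)\,dt\le\int_0^{r^2}g(t)\,dt$, using $g\asymp 1/\nu$ from (\ref{test_g2}) together with the monotonicity of $(1-t)^b/\nu(t)$ taken directly from the normality condition (\ref{w_2}). You avoid any case analysis: the substitution $t=s^2$, the pointwise doubling bound $g(s^2)\ge (C_1C_\nu/C_2)\,g(s)$ obtained by routing through Lemma \ref{lem_weight}, and the normalization $G(r_1)=1$ give the global inequality $G(r)\le 1+M^{-1}G(r^2)$, after which $G(r^2)\ge G(r_1^2)>0$ absorbs the additive constant. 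Every step checks out: the chain $g(s^2)\ge C_1/\nu(s^2)\ge C_1C_\nu/\nu(s)\ge (C_1C_\nu/C_2)\,g(s)$ is valid on all of $[0,1)$ because both (\ref{test_g2}) and Lemma \ref{lem_weight} are stated there, and there is no circularity since Lemma \ref{lem_weight} (Section 4, a statement about the weight alone) is independent of the test function $g$. The trade-off: the paper's argument is self-contained modulo the definition of normality, at the price of the case split, the algebra with the factors $(1\pm r)^b$, and the (harmless but slightly awkward) reduction to $r_1<\delta^{1/4}$; yours is shorter, global, and yields the explicit constant $C_3=G(r_1^2)^{-1}+M^{-1}$, but it delegates the analytic content of normality to Lemma \ref{lem_weight}, whose proof the paper omits with a citation to Shields--Williams.
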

\begin{proof}
	Let $ \delta >0 $ be the constant in Definition of the normal weight $ \nu. $ We may assume that $ r_1 <\delta^{1/4}. $ 
	
	In the case $ r \in [r_1, \delta^{1/4}] $ we have $ \int_0^rg(t)dt $ is bounded above
	and $ \int_0^{r^2}g(t)dt $ is bounded below by a positive constant. So, there exists a constant $ C > 0 $ such that
	\[ \int_0^rg(t)dt  \le C  \int_0^{r^2}g(t)dt, \quad r \in [r_1, \delta^{1/4}].\]
	
	In the case $ r \in (\delta^{1/4}, 1), $ by (\ref{test_g2}) and (\ref{w_2}) we have
	\begin{equation}\label{ineq_g1}
		\begin{aligned}
			\int_{r^2}^rg(t)dt &\le C_2\int_{r^2}^r\dfrac{1}{\nu(t)}dt = C_2\int_{r^2}^r\dfrac{(1-t)^b}{\nu(t)}\dfrac{1}{(1-t)^b}dt \\
			&\le C_2\frac{(1-r^2)^b}{\nu(r^2)}\frac{r-r^2}{(1-r)^b} \\
			&= C_2\frac{(1-r^2)^b}{\nu(r^2)}\frac{(r-r^2)(1+r)^b(1+r^2)^b}{(1-r^4)^b} \\
			&\le  C_2\frac{(r-r^2)(1+r)^b(1+r^2)^b}{r^2-r^4}\int_{r^4}^{r^2}\frac{(1-t)^b}{\nu(t)}\frac{1}{(1-t)^b}dt \\
			&\le \frac{C_2(1+r)^b(1+r^2)^b}{C_1(r+r^2)}\int_{r^4}^{r^2}g(t)dt.
	\end{aligned} \end{equation}
	Therefore, there exists a constant $ C_3> 0 $ such that
	\[ \int_0^rg(t)dt = \int_0^{r^2}g(t)dt + \int_{r^2}^rg(t)dt \le C_3\int_0^{r^2}g(t)dt,\quad r \in (\delta^{1/4}, 1). \]
	This implies that (\ref{test_g2a}) is proved.
\end{proof}

Now, for $ w \in B_X $ fixed such that $ \|w\| \ge r $ for some $ r>0,$ consider  put the test functions
\begin{equation}\label{test_func2}
	\beta_{w}(z) := \dfrac{1}{\|w\|}\int_0^{\langle z,w\rangle}g(t)dt, \quad z\in B_X, 
\end{equation} 
\begin{equation}\label{test_func3}
	\gamma_{w}(z) := \dfrac{1}{\int_0^{\|w\|^2}g(t)dt}\Bigg(\int_0^{\langle z,w\rangle}g(t)dt\Bigg)^2,\quad z\in B_X,
\end{equation} 

Let $ \{w^n\}_{n\ge1} \subset B_X $ be a sequence satisfying, for some $ r>0, $ $ \|w^n\| \ge r $  for every $ n\ge 1 $ and $ \lim_{n\to \infty}w^n = w^0 $ with $ \|w^0\| = 1. $ Under the additional assumption   that 
$$ \int_0^1\dfrac{1}{\nu(t)}< \infty, $$
for each   $n\ge1 $ consider the sequences the functions  $ \{\gamma_{w^n}\}_{n\ge1} $ defined by  (\ref{test_func3}) and of the functions
\begin{equation}\label{test_func4}\begin{aligned}
		\eta_{w^n}(z) &:= \frac{1}{\|w^n\|^{n+1}}\int_0^{\|w^n\|^n\langle z, w^n\rangle}g(t)dt, \quad n =  1,2,\ldots, \\
		\theta_{w^n} &:=  \eta_{w^n} - \beta_{w^n}\quad n =  1,2,\ldots.
\end{aligned}\end{equation}
\begin{prop}\label{lem2} 
	We have 	 $ \beta_{w}, \gamma_{w}, \theta_{w} \in \mathcal B_{\nu,0}(B_X) $ and 
	$$ \|\beta_{w}\|_{\mathcal B_\nu(B_X)} \le C_2, \, \|\gamma_{w}\|_{\mathcal B_\nu(B_X)} \le 2C_2C_3, \, \|\theta_{w^n}\|_{\mathcal B_\nu(B_X)} \le 2C_2. $$
\end{prop}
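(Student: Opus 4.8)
The plan is to reduce all three estimates to one radial-derivative computation. Writing $G(\zeta):=\int_0^\zeta g(t)\,dt$, each test function has the shape $f(z)=\Psi(\langle z,w\rangle)$ for a holomorphic $\Psi$ on a disc: $\Psi=\|w\|^{-1}G$ for $\beta_w$, $\Psi=G(\|w\|^2)^{-1}G^2$ for $\gamma_w$, and $\Psi=\|w^n\|^{-(n+1)}G(\|w^n\|^n\,\cdot\,)$ for $\eta_{w^n}$. Since $\partial_{z_k}\langle z,w\rangle=\overline{w}_k$, we get $\nabla f(z)=\Psi'(\langle z,w\rangle)\overline{w}$, and because $\langle\overline{w},\overline{z}\rangle=\langle z,w\rangle$,
\[ Rf(z)=\langle\nabla f(z),\overline{z}\rangle=\Psi'(\langle z,w\rangle)\,\langle z,w\rangle. \]
In particular $\beta_w(0)=\gamma_w(0)=\eta_{w^n}(0)=0$, so each $\mathcal B_\nu$-norm equals its seminorm and it suffices to bound $\sup_{z}\nu(z)|Rf(z)|$.

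First I would treat $\beta_w$, where $R\beta_w(z)=\|w\|^{-1}g(\langle z,w\rangle)\langle z,w\rangle$. Putting $s:=|\langle z,w\rangle|$, the two ingredients are Cauchy--Schwarz, $s\le\|z\|\,\|w\|$, and the symmetrization $|g(\langle z,w\rangle)|\le g(s)$ from (\ref{test_g1}). Combining them with $\|w\|^{-1}s\le\|z\|$ gives $\nu(z)|R\beta_w(z)|\le\nu(\|z\|)\,g(s)\,\|z\|$. Now $s\le\|z\|$ and $g$ increasing give $g(s)\le g(\|z\|)$, whence (\ref{test_g2}) yields $\nu(\|z\|)g(\|z\|)\le C_2$ and the bound $\|\beta_w\|_{\mathcal B_\nu}\le C_2$. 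The identical mechanism applies to $\eta_{w^n}$ --- the extra factor $\|w^n\|^n\le1$ inside $g$ only strengthens $g(\cdot)\le g(\|z\|)$ --- giving $\|\eta_{w^n}\|_{\mathcal B_\nu}\le C_2$, and then $\|\theta_{w^n}\|_{\mathcal B_\nu}\le\|\eta_{w^n}\|_{\mathcal B_\nu}+\|\beta_{w^n}\|_{\mathcal B_\nu}\le 2C_2$ by the triangle inequality.

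The case of $\gamma_w$ carries the real content and is where Proposition~\ref{est_g1} is needed. Here $R\gamma_w(z)=2\,G(\|w\|^2)^{-1}G(\langle z,w\rangle)\,g(\langle z,w\rangle)\,\langle z,w\rangle$, and besides the bounds already used I would invoke $|G(\langle z,w\rangle)|\le G(s)$, obtained by parametrizing the segment integral and applying (\ref{test_g1}). As before $\nu(\|z\|)g(s)\le C_2$, leaving $\nu(z)|R\gamma_w(z)|\le 2C_2\,s\,G(s)/G(\|w\|^2)$. Since $s\le\|z\|\,\|w\|\le\|w\|$ and $G$ is increasing, $G(s)\le G(\|w\|)$, and Proposition~\ref{est_g1} supplies $G(\|w\|)\le C_3\,G(\|w\|^2)$; together with $s\le1$ this gives $\|\gamma_w\|_{\mathcal B_\nu}\le 2C_2C_3$. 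The main point to watch is this monotone comparison together with the range of validity of Proposition~\ref{est_g1} (namely $\|w\|\ge r_1$, the relevant regime for these boundary test functions).

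Finally, membership in $\mathcal B_{\nu,0}(B_X)$ follows from the very same estimates by using $\|w\|<1$. The argument $\langle z,w\rangle$ always stays in the closed disc of radius $\|z\|\,\|w\|\le\|w\|<1$, so $g(\langle z,w\rangle)$, $G(\langle z,w\rangle)$, and (for $\eta_{w^n}$) $g(\|w^n\|^n\langle z,w^n\rangle)$ are bounded by constants depending only on $\|w\|$. Hence each $\nu(z)|Rf(z)|$ is dominated by a fixed constant times $\nu(\|z\|)$, which tends to $0$ as $\|z\|\to1$ because $\nu$ is normal. This gives $\beta_w,\gamma_w,\eta_{w^n}\in\mathcal B_{\nu,0}(B_X)$, and since $\mathcal B_{\nu,0}(B_X)$ is a linear subspace, also $\theta_{w^n}=\eta_{w^n}-\beta_{w^n}\in\mathcal B_{\nu,0}(B_X)$.
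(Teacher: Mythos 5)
Your proof is correct and takes essentially the same route as the paper's: compute the radial derivative of each function of the form $\Psi(\langle z,w\rangle)$, then combine the symmetrization bound (\ref{test_g1}), the monotonicity of $g$, the constant $C_2$ from (\ref{test_g2}), and (for $\gamma_w$) Proposition \ref{est_g1}, with membership in $\mathcal B_{\nu,0}(B_X)$ following because the derivative factors stay bounded by constants depending only on $w$ while $\nu(z)\to 0$. The paper writes out only the $\beta_w$ case and calls the others ``very similar''; your explicit treatment of $\gamma_w$ (the factor $2$ from differentiating $G^2$ and the constant $C_3$, valid in the regime $\|w\|\ge r_1$) and of $\theta_{w^n}$ via the triangle inequality is precisely what that remark intends.
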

\begin{proof}
	It suffices to prove for the function $ \beta_{w} $ because the proofs for other ones are very similar.
	
	(i) Since $ \nu(z) \to 0, $ from (\ref{test_g2}) we have 
	\[\begin{aligned}
		\nu(z)\|\nabla \beta_{w}(z)\| &= \dfrac{1}{\|w\|}|\langle z, w\rangle|\nu(z)| |g(\langle z, w\rangle)| \\
		&\le \nu(z) g(\|w\|) \to 0\quad \text{as}\ \|z\| \to 1.
	\end{aligned}
	\]
	This implies   that $ \beta_{w} \in \mathcal B_{\nu,0}(B_X). $ 
	
	(ii) Obviously, $ \beta_{w}(0) = 0. $ On the other hand, we have
	\[\begin{aligned}
		\|\beta_{w}\|_{\mathcal B_\nu(B_X)} & = \sup_{z\in B_X}\nu(z)\|R\beta_{w}(z)\| \le \nu(z)\dfrac{1}{\|w\|}|\langle z,w\rangle||g(\langle z, w\rangle)| \le C_2 <\infty.
	\end{aligned} \]
\end{proof}
\begin{prop}\label{lem3} We have
	\begin{enumerate}[\rm (1)]
		\item  The sequences $ \{\theta_{w^n}\}_{n\ge 1}, $  $ \{\gamma_{w^n}\}_{n\ge1} ,$ $ \{\widetilde{\eta}_{w^n}\}_{n\ge1} $  are bounded in $ \mathcal B_\nu(B_X); $ 
		\item $ 	\gamma_{w^n} \to 0 $  
		uniformly
		on any compact subset of $ B_X$  if $ \int_0^1\frac{dt}{\nu(t)} =\infty; $
		\item  $ \theta_{w^n} \to0 $
		uniformly  	on any compact subset of $ B_X$ if  $ \int_0^1\frac{dt}{\nu(t)} <\infty. $
	\end{enumerate}
\end{prop}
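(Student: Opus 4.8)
The plan is to treat the three assertions in turn, in each case pulling everything back to the scalar estimates for $g$ recorded in \eqref{test_g2} and Proposition \ref{est_g1}.

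For the uniform boundedness in (1), the bounds $\|\gamma_{w^n}\|_{\mathcal B_\nu(B_X)}\le 2C_2C_3$ and $\|\theta_{w^n}\|_{\mathcal B_\nu(B_X)}\le 2C_2$ are already furnished by Proposition \ref{lem2}, and since the constants $C_2,C_3$ there are independent of $w^n$, the two sequences are bounded. For $\{\widetilde\eta_{w^n}\}$ I would repeat the computation: from \eqref{test_func4} one gets $\nabla\eta_{w^n}(z)=\|w^n\|^{-1}g(\|w^n\|^n\langle z,w^n\rangle)\,\overline{w^n}$, hence $\nu(z)|R\eta_{w^n}(z)|=\|w^n\|^{-1}|\langle z,w^n\rangle|\,\nu(z)\,|g(\|w^n\|^n\langle z,w^n\rangle)|$. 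Using $|g(\zeta)|\le g(|\zeta|)$, the monotonicity of $g$ and $\|w^n\|^n|\langle z,w^n\rangle|\le\|z\|$, one bounds $\nu(z)|g(\cdots)|\le\nu(\|z\|)g(\|z\|)\le C_2$, while $|\langle z,w^n\rangle|\le\|w^n\|\,\|z\|$; together with $\eta_{w^n}(0)=0$ this gives $\|\widetilde\eta_{w^n}\|_{\mathcal B_\nu(B_X)}\le C_2$ for all $n$.

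For (2) I fix a compact $K\subset B_X$, so $\|z\|\le\rho<1$ on $K$. Parametrising the segment from $0$ to $\langle z,w^n\rangle$ and using $g\ge0$ together with $|g(\zeta)|\le g(|\zeta|)$ yields $\bigl|\int_0^{\langle z,w^n\rangle}g\,\bigr|\le\int_0^{|\langle z,w^n\rangle|}g\le\int_0^{\rho}g$, so the numerator of $\gamma_{w^n}$ is bounded on $K$ uniformly in $n$. The denominator $\int_0^{\|w^n\|^2}g$ increases to $\int_0^1 g$ since $\|w^n\|\to1$, and the lower estimate $g(t)\ge C_1/\nu(t)$ from \eqref{test_g2} gives $\int_0^1 g\ge C_1\int_0^1 dt/\nu(t)=\infty$ in the regime of (2). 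Dividing, $\gamma_{w^n}\to0$ uniformly on $K$.

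For (3) the decisive first step is the substitution $t=\|w^n\|^n\tau$ in \eqref{test_func4}, which turns $\eta_{w^n}$ into $\|w^n\|^{-1}\int_0^{\langle z,w^n\rangle}g(\|w^n\|^n\tau)\,d\tau$ and hence collapses the difference to the single integral
\[ \theta_{w^n}(z)=\frac{1}{\|w^n\|}\int_0^{\langle z,w^n\rangle}\bigl[g(\|w^n\|^n t)-g(t)\bigr]\,dt. \]
On $K$ this is dominated by $\rho\,\sup_{|s|\le\rho}|g(\|w^n\|^n s)-g(s)|$, so the whole matter reduces to showing that $g(\|w^n\|^n s)\to g(s)$ uniformly for $|s|\le\rho$. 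I expect this to be the main obstacle. Since $g$ is holomorphic it is uniformly continuous on the closed disc $\{|s|\le\rho\}$, so the estimate is immediate once $\|w^n\|^n\to1$; controlling $\|w^n\|^n$ as $\|w^n\|\to1$ is precisely the delicate point, and it is here that the hypothesis $\int_0^1 dt/\nu(t)<\infty$ (equivalently $\int_0^1 g<\infty$) and the rate of $\|w^n\|\to1$ must be brought to bear. I would route this through a dominated-convergence estimate on $\int_0^{\langle z,w^n\rangle}[g(\|w^n\|^n t)-g(t)]\,dt$, using the integrable majorant $2g$ supplied by \eqref{test_g2} to pass the convergence through uniformly in $z\in K$.
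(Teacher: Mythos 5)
Your parts (1) and (2) are correct and run along the paper's lines: the paper settles (1) by citing Proposition \ref{lem2} and dismisses (2) as ``easy to check'', so your explicit bound $\|\eta_{w^n}\|_{\mathcal B_\nu(B_X)}\le C_2$ actually fills a real omission (Proposition \ref{lem2} never treats $\eta_{w^n}$, and the symbol $\widetilde\eta_{w^n}$ in the statement is nowhere defined; your reading of it as $\eta_{w^n}$ is the only sensible one). The problem is (3). Your substitution $t=\|w^n\|^n\tau$, giving $\theta_{w^n}(z)=\|w^n\|^{-1}\int_0^{\langle z,w^n\rangle}\bigl[g(\|w^n\|^nt)-g(t)\bigr]\,dt$, is exactly the paper's reduction (the paper writes the same quantity as $\|w^n\|^{-1}\int_{\|w^n\|^n\langle z,w^n\rangle}^{\langle z,w^n\rangle}g(t)\,dt$ and bounds it by the length of that segment times $\sup|g|$). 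But the step you correctly single out as ``the delicate point'' --- that $\|w^n\|^n\to1$ --- is not something any argument can supply, because it is false under the hypotheses stated in this section: only $\|w^n\|\ge r$ and $w^n\to w^0$ with $\|w^0\|=1$ are assumed, and for $w^n=(1-\tfrac1n)e_1$ one has $\|w^n\|^n\to e^{-1}$, whence $\theta_{w^n}(\rho e_1)\to\int_0^{\rho}\bigl[g(e^{-1}t)-g(t)\bigr]\,dt\neq0$, since the integrand is strictly negative for $t>0$ ($g$ is strictly increasing on $[0,1)$). So assertion (3) itself fails without a rate assumption on $\|w^n\|$, and the gap in your proof cannot be closed as the statement stands.

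In particular, your proposed repair by dominated convergence cannot work: dominated convergence needs pointwise convergence of the integrand $g(\|w^n\|^nt)-g(t)\to0$, which is precisely the missing fact; the integrable majorant $2g$ is beside the point. What actually rescues the paper is that in its only application --- the proof of (3) $\Rightarrow$ (6) in Theorem \ref{thm_compact2} --- the sequence is additionally assumed to satisfy $1-\tfrac1{n^2}<\|w^n\|<1$ (see (\ref{eq_wtend1})), which gives $1-\|w^n\|^n\le n(1-\|w^n\|)<\tfrac1n\to0$; the paper's own proof of Proposition \ref{lem3}(3) silently uses this (its factor ``$\|1-w^n\|$'', which should read $1-\|w^n\|^n$, is simply asserted to tend to $0$). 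So the honest summary is: your argument coincides with the paper's up to the crux, you have correctly located a flaw that the paper glosses over, but your suggested bridge fails; the proposition must be strengthened by a hypothesis such as (\ref{eq_wtend1}) (or $n(1-\|w^n\|)\to0$), under which your uniform-continuity argument closes the proof immediately and no dominated convergence is needed.
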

\begin{proof} (1) It follows from Proposition \ref{lem2}. 
	
	(2) By (\ref{test_g2}) and  the assumption $ \int_0^1\frac{1}{\nu(t)} = \infty, $ we  have $ \int_0^1g(t)dt = \infty. $ Then it is easy to check that $ 	\gamma_{w^n} \to 0 $  
	uniformly
	on any compact subset of $ B_X.$   
	
	(3) First, note that, by the assumption $ \int_0^1\frac{1}{\nu(t)} <\infty $ we have $ \int_0^1g(t )dt < \infty.$ 
	Let $ K $ be a compact subset of $ B_X. $ We may assume that $ \sup_{z \in K}\|z\| \le R <1. $ For every $ z\in K $ we have
	\[ \begin{aligned}
		|\theta_{w^n}(z)| &\le   \dfrac{1}{\|w^n\|^{n+2}}\bigg[\|w^n\|^{n+1}\int_0^{\langle z, w^n\rangle}g(t)dt-\|w^n\|\int_0^{\|w^n\|^n\langle z, w^n\rangle}g(t)dt\bigg]  \\
	&\le   \dfrac{1}{\|w^n\|}\bigg[\int_0^{\langle z, w^n\rangle}g(t)dt- \int_0^{\|w^n\|^n\langle z, w^n\rangle}g(t)dt\bigg]  \\
		&= 
		\dfrac{1}{\|w^n\|}\int_{\|w^n\|^n\langle z, w^n\rangle}^{\langle z, w^n\rangle}g(t)dt\\
		&\le  \dfrac{1}{\|w^n\|}\|z\| \|1-w^n\|\sup_{t\in [r,R]}|g(t)| \\
		&\le \dfrac{1}{\|w^n\|}R\|1-w^n\|\sup_{t\in [r,R]}|g(t)| \\
		&\to 0 \quad\text{as}\ n \to \infty.
		\end{aligned} \]
	And we are done.
\end{proof}

We also  need the following lemma which is easily obtained   by using  Mincowski's inequality and elementary calculations:
\begin{lem}\label{nabla} Let $\psi, f \in H(B_X) $ and  $\varphi \in S(B_X).$ Then , for every $ z \in B_X $ we have
	\begin{equation}\label{nabla_1} \|\nabla_z(\psi.(f \circ \varphi))(z)\|  \le |f(\varphi(z))|\|\nabla_z\psi(z)\| +  |\psi(z)|\big\|\nabla_{\varphi(z)}f(\varphi(z))\big\|\|\nabla_z\varphi(z)\| 
	\end{equation}
	and
	\begin{equation}\label{R_1}  \|R(\psi\cdot(f \circ \varphi))(z)\|  \le |f(\varphi(z))|\|R\psi(z)\| +  |\psi(z)|\big\|\nabla_{\varphi(z)}f(\varphi(z))\big\|\|R\varphi(z)\|.
	\end{equation}
\end{lem}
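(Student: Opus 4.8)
The statement to prove is Lemma \ref{nabla}, which gives product-rule-type estimates for the gradient norm and radial derivative norm of a weighted composition $\psi\cdot(f\circ\varphi)$.

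\medskip

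The plan is to reduce everything to the scalar-valued gradient via the supremum definitions of $\|\nabla\cdot\|$ and $\|R\cdot\|$, then apply the ordinary chain rule and product rule for holomorphic functions, and finally use Minkowski's inequality (the triangle inequality in $X$) together with Cauchy--Schwarz to pass from the gradient of a composition to the product of gradient norms. First I would fix $z\in B_X$ and recall that, since the functions here are scalar-valued ($\psi,f\in H(B_X)$, $\varphi\in S(B_X)$), the quantity $\|\nabla_z(\psi\cdot(f\circ\varphi))(z)\|$ is simply the norm in $X$ of the gradient of the scalar holomorphic function $\psi\cdot(f\circ\varphi)$. Applying the Leibniz rule gives
\[
\nabla_z\bigl(\psi\cdot(f\circ\varphi)\bigr)(z) = (f\circ\varphi)(z)\,\nabla_z\psi(z) + \psi(z)\,\nabla_z\bigl(f\circ\varphi\bigr)(z).
\]
Taking the $X$-norm and using the triangle inequality immediately yields the first summand $|f(\varphi(z))|\,\|\nabla_z\psi(z)\|$ and leaves $|\psi(z)|\,\|\nabla_z(f\circ\varphi)(z)\|$ to be estimated.

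\medskip

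The core step is then to bound $\|\nabla_z(f\circ\varphi)(z)\|$ by $\|\nabla_{\varphi(z)}f(\varphi(z))\|\,\|\nabla_z\varphi(z)\|$. Here I would write the chain rule for the composition: for any direction $x\in X$,
\[
\bigl((f\circ\varphi)'(z)\bigr)(x) = f'(\varphi(z))\bigl(\varphi'(z)(x)\bigr) = \bigl\langle \varphi'(z)(x),\,\overline{\nabla f(\varphi(z))}\bigr\rangle,
\]
using the representation of the derivative as an inner product with the (conjugated) gradient introduced just before the definition of the Bloch norms. Since $\nabla_z(f\circ\varphi)(z)$ is the Riesz representative of the functional $x\mapsto (f\circ\varphi)'(z)(x)$, its norm is the operator norm of this functional, and by Cauchy--Schwarz this is at most $\|\varphi'(z)\|\,\|\nabla f(\varphi(z))\|$, i.e. $\|\nabla_z\varphi(z)\|\,\|\nabla_{\varphi(z)}f(\varphi(z))\|$. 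Combining with the product-rule step gives \eqref{nabla_1}.

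\medskip

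For \eqref{R_1} the argument is parallel but applied to the radial derivative $R g(z)=\langle \nabla g(z),\overline z\rangle$. I would compute $R(\psi\cdot(f\circ\varphi))(z)=\langle\nabla(\psi\cdot(f\circ\varphi))(z),\overline z\rangle$, distribute over the Leibniz sum to get $(f\circ\varphi)(z)\,R\psi(z)+\psi(z)\,R(f\circ\varphi)(z)$, and then bound $|R(f\circ\varphi)(z)|=|\langle (f\circ\varphi)'(z)(z),\,1\rangle|$ by $\|\nabla_{\varphi(z)}f(\varphi(z))\|\,|R\varphi(z)|$ via the same chain-rule/Cauchy--Schwarz reasoning, where $R\varphi(z)=\langle\varphi'(z),\overline z\rangle$ is the radial derivative of $\varphi$ as defined in the text. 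I expect the only genuine subtlety to be bookkeeping: keeping the two distinct gradients (one at $z$, one at $\varphi(z)$) and the operator norm $\|\varphi'(z)\|$ versus the vector $R\varphi(z)$ correctly paired, so that the chain-rule contraction lands exactly on the stated right-hand sides. The analytic content is entirely the chain rule plus Cauchy--Schwarz, so no deep obstacle arises; the estimates are the asserted ``Minkowski inequality and elementary calculations.''
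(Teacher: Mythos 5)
Your proposal is correct and follows exactly the route the paper intends: the paper omits the proof, stating only that the lemma follows from Minkowski's inequality and elementary calculations, and your Leibniz-rule decomposition, triangle inequality, and chain-rule-plus-Cauchy--Schwarz bound $\|\nabla_z(f\circ\varphi)(z)\|\le\|\nabla_{\varphi(z)}f(\varphi(z))\|\,\|\nabla_z\varphi(z)\|$ (with $\|\nabla_z\varphi(z)\|=\|\varphi'(z)\|$ and $\|R\varphi(z)\|=\|\varphi'(z)(z)\|$ under the paper's vector-valued norm conventions) are precisely those elementary calculations. The only cosmetic blemish is the notation $|\langle (f\circ\varphi)'(z)(z),1\rangle|$ for the scalar $R(f\circ\varphi)(z)$, which does not affect correctness.
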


In order to study the compactness of  the operators  $W_{\psi. \varphi},$   as in \cite{Tj} we can prove the following.
\begin{lem}\label{lem_Tj}
	Let $E, F$ be two Banach spaces of holomorphic functions on $B_X.$ Suppose 
	that
	\begin{enumerate}[\rm(1)]
		\item The point evaluation functionals on $E$ are continuous;
		\item The closed unit ball of $E$ is a compact subset of $E$ in the topology of uniform
		convergence on compact sets;
		\item  $T : E \to F$ is continuous when $E$ and $F$ are given the topology of uniform
		convergence on compact sets.
	\end{enumerate}
	Then, $T$ is a compact operator if and only if given a bounded sequence $\{f_n\}$ in $ E $ such
	that $f_n \to 0$ uniformly on compact sets, then the sequence $\{Tf_n\}$ converges to zero in the norm of $F.$
\end{lem}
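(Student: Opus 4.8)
The plan is to prove the two implications separately, using throughout the standing fact that on both $E$ and $F$ the norm topology is finer than the compact-open topology $\tau_{co}$, so that norm convergence forces $\tau_{co}$-convergence. For $E$ this is exactly hypothesis (1) (continuity of the point evaluations), while for the target spaces $F$ considered in this paper it holds by the pointwise estimates of the type (\ref{eq_5.1}); together with the holomorphy of all functions involved this makes $\tau_{co}$ a Hausdorff topology that is dominated by either norm.

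For the necessity ($T$ compact $\Rightarrow$ the sequential condition), I would take a bounded sequence $\{f_n\}$ in $E$ with $f_n \to 0$ in $\tau_{co}$ and argue by contradiction. If $\|Tf_n\|_F \not\to 0$, I pass to a subsequence with $\|Tf_{n_k}\|_F \ge \varepsilon$ for some $\varepsilon>0$. Since $T$ is compact and $\{f_{n_k}\}$ is bounded in $E$, the image $\{Tf_{n_k}\}$ has a further subsequence converging in the norm of $F$ to some $h$ with $\|h\|_F \ge \varepsilon$, whence $h \neq 0$. On the other hand, hypothesis (3) gives $Tf_{n_k} \to 0$ in $\tau_{co}$; as norm convergence in $F$ implies $\tau_{co}$-convergence and $\tau_{co}$ is Hausdorff, the two limits must coincide, forcing $h = 0$, a contradiction.

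For the sufficiency (sequential condition $\Rightarrow T$ compact), I would show that $T$ maps the closed unit ball $B_E$ onto a relatively norm-compact subset of $F$; since $F$ is Banach, it suffices to extract, from an arbitrary sequence $\{f_n\}\subset B_E$, a subsequence $\{Tf_{n_k}\}$ that converges in the norm of $F$. By hypothesis (2), $B_E$ is $\tau_{co}$-compact, so I extract a subsequence $f_{n_k}\to f\in B_E$ in $\tau_{co}$. Setting $g_k := f_{n_k}-f$, the sequence $\{g_k\}$ is bounded in $E$ (by $2$) and $g_k\to 0$ in $\tau_{co}$; the hypothesis then yields $\|Tg_k\|_F\to 0$, i.e. $Tf_{n_k}\to Tf$ in the norm of $F$. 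This gives the relative norm-compactness of $T(B_E)$ and hence the compactness of $T$.

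The main obstacle lies in the sufficiency direction, precisely in the passage from $\tau_{co}$-compactness of $B_E$ to the extraction of a $\tau_{co}$-convergent \emph{subsequence}: this is immediate when $\tau_{co}$ is metrizable on bounded sets (as on $\BB_m$, or when $X$ is separable), and in the general Banach-space setting it is handled exactly as in \cite{Tj}, using the normal-family behaviour built into the assumptions (e1)-(e2). Once this extraction is secured and the domination of $\tau_{co}$ by the $F$-norm is recorded, the remaining steps are routine.
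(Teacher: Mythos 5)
Your argument is exactly the one the paper itself invokes: the paper offers no proof of Lemma \ref{lem_Tj} beyond the phrase ``as in \cite{Tj}'', and what you wrote is Tjani's standard two-implication argument (contradiction plus limit identification for necessity; subsequence extraction from the $\tau_{co}$-compact ball for sufficiency). Your necessity direction is correct, and you rightly noticed that what it really uses is continuity of the point evaluations on the \emph{target} space $F$ --- this is how the hypothesis reads in Tjani's original lemma, so the paper's placement of hypothesis (1) on $E$ is a slip --- and that for the Bloch-type targets of this paper this follows from the pointwise estimate (\ref{eq_5.1}).

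The one genuine gap is the step you yourself flagged in the sufficiency direction, and your deferral does not close it: $\tau_{co}$-compactness of $B_E$ (hypothesis (2)) does not in general allow the extraction of a $\tau_{co}$-convergent \emph{subsequence}, since compactness does not imply sequential compactness, and \cite{Tj} cannot help because it treats only the unit disk, where $\tau_{co}$ is metrizable; nothing in (e1)--(e2) substitutes for that metrizability. In the paper's own setting the step is sound when $X$ is separable: by the proof of Proposition \ref{prop_e1e3} the unit ball of $\mathcal B_\mu(B_X)$ is uniformly Lipschitz, hence equicontinuous, so on it $\tau_{co}$ coincides with pointwise convergence on a countable dense subset of $B_X$ and is metrizable. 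But for non-separable $X$ the extraction genuinely fails: take $\Gamma=[0,1]$, $X=\ell^2(\Gamma)$, and $f_n(z)=\sum_{\gamma\in\Gamma}a_n(\gamma)\langle z,e_\gamma\rangle^2$, where $a_n(\gamma)\in\{0,1\}$ is the $n$-th binary digit of $\gamma$. These functions are uniformly bounded on $B_X$ with $\|\nabla f_n(z)\|\le 2$, so they lie in a fixed ball of $\mathcal B_\mu(B_X)$, yet no subsequence converges even pointwise, because for any $(n_k)$ there is a $\gamma$ whose digits alternate along $(n_k)$, and pointwise convergence is necessary for $\tau_{co}$-convergence. So your proof (like the paper's citation-level one) establishes the sufficiency direction only under an extra hypothesis making bounded sets $\tau_{co}$-metrizable, e.g.\ separability of $X$; in the full generality in which the lemma is stated, that implication remains unproved.
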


We can now combine this result with  Montel theorem and  (\ref{eq_5.1}) to  obtain the following proposition. The details of the proof are omitted here.
\begin{prop}\label{prop_W_compact} Let $\psi \in H(B_X)$ and $\varphi\in S(B_X).$   Then  $W_{\psi, \varphi}: \mathcal B_{\nu}(B_X) \to \mathcal B_{\mu}(B_X)$ is compact if and only if $\|W_{\psi, \varphi}(f_n)\|_{\mathcal B_\mu(B_X)} \to 0$ for any bounded sequence $\{f_n\}$ in $\mathcal B_{\nu}(B_X)$ converging to $0$ uniformly on  compact sets in $B_X.$ 
\end{prop}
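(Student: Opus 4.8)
The plan is to invoke the abstract compactness criterion of Lemma~\ref{lem_Tj} with $E = \mathcal{B}_\nu(B_X)$ and $F = \mathcal{B}_\mu(B_X)$, and to reduce the proposition to the verification of its three hypotheses. First I would check hypothesis~(1): by Proposition~\ref{prop_e1e3} the space $\mathcal{B}_\nu(B_X)$ satisfies (e1)--(e2), and from part~(i) of that proof the pointwise estimate \eqref{eq_5.1} shows that each evaluation $f \mapsto f(z)$ is continuous on $\mathcal{B}_\nu(B_X)$. Hypothesis~(2) is precisely condition (e2), again supplied by Proposition~\ref{prop_e1e3}: the closed unit ball of $\mathcal{B}_\nu(B_X)$ is $\tau_{co}$-compact, which is exactly compactness in the topology of uniform convergence on compact sets. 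So both~(1) and~(2) are immediate from the earlier work, and the only genuine point requiring an argument is hypothesis~(3).

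The key step is therefore to show that $W_{\psi,\varphi}: \mathcal{B}_\nu(B_X) \to \mathcal{B}_\mu(B_X)$ is continuous when both spaces carry the compact-open topology. To see this I would take a net (or sequence) $f_n \to f$ uniformly on compact subsets of $B_X$ and argue that $W_{\psi,\varphi}(f_n) = \psi \cdot (f_n \circ \varphi) \to \psi \cdot (f \circ \varphi)$ uniformly on compact sets as well. The point is that for a compact set $K \subset B_X$, the image $\varphi(K)$ is again contained in a compact subset of $B_X$ (since $\varphi \in S(B_X)$ is holomorphic, hence continuous, and maps $B_X$ into itself), so $f_n \circ \varphi \to f \circ \varphi$ uniformly on $K$; multiplying by the locally bounded factor $\psi$ preserves uniform convergence on $K$. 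Here I would lean on the Montel-type compactness already packaged into (e2), which guarantees that uniform convergence on compacta is the relevant and well-behaved topology. This establishes~(3).

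With all three hypotheses of Lemma~\ref{lem_Tj} in force, the conclusion of that lemma states exactly that $W_{\psi,\varphi}$ is compact if and only if, for every bounded sequence $\{f_n\}$ in $\mathcal{B}_\nu(B_X)$ with $f_n \to 0$ uniformly on compact sets, one has $\|W_{\psi,\varphi}(f_n)\|_{\mathcal{B}_\mu(B_X)} \to 0$. This is verbatim the statement of the proposition, so no further work is needed beyond the reduction. The main obstacle I anticipate is entirely contained in hypothesis~(3): one must be careful that $\varphi$ sends compact subsets of the open ball $B_X$ into compact subsets of $B_X$ (and not merely into $\overline{B_X}$), and that the multiplier $\psi$ does not destroy uniform convergence on such sets; both follow from continuity of $\varphi$ and local boundedness of the holomorphic function $\psi$, which is why the proposition remarks that the details are routine and omits them.
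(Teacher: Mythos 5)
Your proposal is correct and is exactly the paper's argument: the paper proves this proposition by combining Lemma~\ref{lem_Tj} with the Montel-type compactness of the unit ball (Proposition~\ref{prop_e1e3}, i.e.\ condition (e2)) and the pointwise estimate (\ref{eq_5.1}), omitting the routine details. Your verification of hypothesis (3) — that $\varphi$ maps compact subsets of $B_X$ into compact subsets of $B_X$ and that multiplication by $\psi$ preserves uniform convergence on compacta — is precisely the content the paper leaves to the reader, and it is sound (using nets, as you note, so that metrizability of $\tau_{co}$ is never needed).
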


\section{Weighted composition operators between Bloch-type spaces}
\setcounter{equation}{0}

Let $ \nu, \mu $ be  normal weights on $ B_X. $
Let $\varphi \in S(B_X),$  the set of  holomorphic self-maps on $ B_X$ and $ \psi \in H(B_X). $ For each   $ F \subset \Gamma $  finite and $ m \in \N $ we write
\[ \varphi^{[F]} = \varphi\big|_{\text{span}\{e_k, k \in F\}}, \quad \varphi^{[m]} = \varphi\big|_{\text{span}\{e_1, \ldots, e_m\}}\]
and
\[ \psi^{[F]} = \psi\big|_{\text{span}\{e_k, k \in F\}}, \quad \psi^{[m]} = \psi\big|_{\text{span}\{e_1, \ldots, e_m\}}.\]
For each $ j \in \Gamma $ and $ k\ge 1 $ we denote
\[ \varphi_j(\cdot) := \langle\varphi(\cdot), e_j\rangle, \quad \varphi_{(k)}(\cdot) := (\varphi_1(\cdot), \ldots, \varphi_k(\cdot)).  \]
      In this section we investigate the boundedness and the compactness of  the   operators $ W_{\psi,\varphi} $ between the (little) Bloch-type spaces $ \mathcal B_\nu $ ($ \mathcal B_{\nu,0} $)  and $ \mathcal B_\mu,$    ($ \mathcal B_{\mu,0}$)  via the estimates of $ \psi^{[m]}, \varphi^{[m]} $ and $ \varphi_{(k)}. $ 
Hence, by Theorem \ref{thm_WO}, some characterizations for the boundedness and compactness of the operators $ \widetilde{W}_{\psi, \varphi} $ between  spaces $ W\mathcal B_\nu(B_X,Y), $ ($ W\mathcal B_{\nu,0}(B_X,Y) $) and  $ W\mathcal B_{\mu}(B_X,Y), $ ($ W\mathcal B_{\nu,0}(B_X,Y) $)  will be obtained from these results. 

By Proposition \ref{prop_4.3} and Theorem \ref{thm_main} in the paper we will only present the results for the spaces $ \mathcal B_\mu^R(B_X). $

First we investigate the boundedness of weighted composition operators.
 We use there certain quantities, which will be used in this work. We list them below:
 
 \begin{subequations}
 	\begin{align}
 		 		&\mu^{[m]}(y)\|R\psi^{[m]}(y)\|\max\bigg\{1, \displaystyle\int_0^{\|\varphi^{[m]}(y)\|}\dfrac{dt}{\nu(t)}\bigg\}, \label{B_1a}\\
  &\dfrac{\mu^{[m]}(y)|\psi^{[m]}(y)|\|R\varphi^{[m]}(y)\|}{\nu(\varphi^{[m]}(y))}, \label{B_1b}
\end{align}
\end{subequations}
 \begin{subequations}
	\begin{align}
	&\mu^{[F]}(y)\|R\psi^{[F]}(y)\| \max\bigg\{1, \int_0^{\|\varphi^{[F]}(y)\|}\dfrac{dt}{\nu(t)}\bigg\}, \label{B_2a}\\
&\dfrac{\mu^{[F]}(y)|\psi^{[F]}(y)|\|R\varphi^{[F]}(y)\|}{\nu(\varphi^{[F]}(y))}, \label{B_2b}
	\end{align}
\end{subequations}
 \begin{subequations}
	\begin{align}
	&\mu(y)\|R\psi(y)\| \max\bigg\{1, \displaystyle\int_0^{\|\varphi_{(k)}(y)\|}\dfrac{dt}{\nu(t)}\bigg\}, \label{B_3a}\\
&\frac{\mu(y)|\psi(y)|\|R\varphi_{(k)}(y)\|}{\nu(\varphi_{(k)}(y))}, \label{B_3b}
	\end{align}
\end{subequations}
  \begin{subequations}
 	\begin{align}
 		&\mu(y)\|R\psi(y)\| \max\bigg\{1, \displaystyle\int_0^{\|\varphi(y)\|}\dfrac{dt}{\nu(t)}\bigg\}, \label{B_4a}\\
 	&\frac{\mu(y)|\psi(y)|\|R\varphi(y)\|}{\nu(\varphi(y))}. \label{B_4b}
 	\end{align}
 \end{subequations}
 
\begin{thm}
	\label{thm6_1} Let $\psi \in H(B_X)$ and $\varphi   \in S(B_X).$   Let  $ \mu, \nu $ be   normal weights on $ B_X. $  Then   the  following are equivalent:  
	\begin{enumerate}[\rm(1)]
		\item 	$M^{[m]}_{R\psi,\varphi} := \displaystyle\sup_{y \in \BB_m}(\ref{B_1a})<\infty,$ 
		$ M^{[m]}_{\psi,R\varphi} := \displaystyle\sup_{y \in \BB_m} (\ref{B_1b})<\infty $ for some $ m \ge 2;$ 
		\item $ M^{[F]}_{R\psi,\varphi} := \displaystyle\sup_{y \in \BB_{[F]}}(\ref{B_2a})<\infty, $ $ M^{[F]}_{\psi,R\varphi} := \displaystyle\sup_{y \in \BB_{[F]}}(\ref{B_2b})<\infty $ for every $ F \subset \Gamma$ finite;
			\item $  M^{(k)}_{R\psi,\varphi} := \displaystyle\sup_{y \in B_X} (\ref{B_3a})<\infty, $ $ M^{(k)}_{\psi,R\varphi} := \displaystyle\sup_{y \in B_X}(\ref{B_3b}) <\infty $ for every $ k\ge 1;$ 
			\item $  M_{R\psi,\varphi} := \displaystyle\sup_{y \in B_X}(\ref{B_4a}) <\infty,$  $ M_{\psi,R\varphi}:= \displaystyle\sup_{y \in B_X}(\ref{B_4b}) <\infty;$ 
		\item $W_{\psi, \varphi}:   \mathcal B_\nu(B_X) \to \mathcal B_\mu(B_X)$     is bounded; 
		\item $W^0_{\psi, \varphi}: \mathcal B_{\nu,0}(B_X) \to \mathcal B_\mu(B_X)$     is bounded. 
	\end{enumerate}
	Moreover, if  $W_{\psi, \varphi}$     is bounded,  the following asymptotic relation holds for some $ m\ge 2: $
		\begin{equation} \label{eq_Thm6.2}
			\|W^{0,0}_{\psi,\varphi}\|    \asymp |\psi(0)|\max\bigg\{1,\int_0^{\|\varphi(0)\|}\frac{dt}{\mu(t )}\bigg\} + M_{R\psi,\varphi} + 
			M_{\psi, R\varphi},
		\end{equation}
	 \end{thm}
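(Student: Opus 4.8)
The plan is to establish the cycle (4) $\Rightarrow$ (5) $\Rightarrow$ (6) $\Rightarrow$ (4) together with the chain (1) $\Leftrightarrow$ (2) $\Leftrightarrow$ (3) $\Leftrightarrow$ (4), and then to read off the asymptotic (\ref{eq_Thm6.2}) by tracking the constants produced along the way. The equivalence of (1)--(4) is the finite-dimensional bookkeeping part. Since $\|\varphi_{(k)}(y)\|\le\|\varphi(y)\|$ and the projected radial derivatives satisfy $\|R\varphi_{(k)}(y)\|\le\|R\varphi(y)\|$, while $\nu$ is radial and decreasing so that $\nu(\varphi_{(k)}(y))\ge\nu(\varphi(y))$, each of (\ref{B_3a})--(\ref{B_3b}) is dominated by the corresponding (\ref{B_4a})--(\ref{B_4b}); the reverse passage is a limiting argument as $k\to\infty$. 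The equivalences with the restricted versions (\ref{B_1a})--(\ref{B_2b}) follow exactly the pattern of Proposition \ref{prop_OSm}: the Bloch semi-norm of a function on $B_X$ is the supremum of the semi-norms of its restrictions $f_F$, $f_x$ to finite-dimensional subspaces, and applying this principle to $W_{\psi,\varphi}f$ (and to $\psi,\varphi$ themselves) shows that the suprema over $OS_m$ or over finite $F\subset\Gamma$ coincide with the global one.

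For the sufficiency (4) $\Rightarrow$ (5) I would argue directly. Fix $f\in\mathcal B_\nu(B_X)$ and $z\in B_X$ and apply the product estimate (\ref{R_1}) of Lemma \ref{nabla} to $W_{\psi,\varphi}f=\psi\cdot(f\circ\varphi)$:
\[ \mu(z)\|R(\psi\cdot(f\circ\varphi))(z)\|\le \mu(z)|R\psi(z)|\,|f(\varphi(z))| + \mu(z)|\psi(z)|\,\|\nabla f(\varphi(z))\|\,\|R\varphi(z)\|. \]
The first summand I bound by the growth estimate (\ref{eq_5.1}) applied in $\mathcal B_\nu$, namely $|f(\varphi(z))|\le\max\{1,\int_0^{\|\varphi(z)\|}\tfrac{dt}{\nu(t)}\}\,\|f\|_{\mathcal B_\nu}$, turning it into $M_{R\psi,\varphi}\|f\|_{\mathcal B_\nu}$. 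For the second summand I use the coincidence $\mathcal B_\nu^\nabla=\mathcal B_\nu^R$ with equivalent norms (Theorem \ref{thm_main}), giving $\|\nabla f(\varphi(z))\|\lesssim\nu(\varphi(z))^{-1}\|f\|_{\mathcal B_\nu}$ and hence the bound $M_{\psi,R\varphi}\|f\|_{\mathcal B_\nu}$. Adding the contribution $|\psi(0)|\,|f(\varphi(0))|$, again controlled by (\ref{eq_5.1}), yields $\|W_{\psi,\varphi}f\|_{\mathcal B_\mu}\lesssim\big(|\psi(0)|\max\{1,\int_0^{\|\varphi(0)\|}\tfrac{dt}{\nu(t)}\}+M_{R\psi,\varphi}+M_{\psi,R\varphi}\big)\|f\|_{\mathcal B_\nu}$, which is boundedness and simultaneously the upper bound $\lesssim$ in (\ref{eq_Thm6.2}). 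The implication (5) $\Rightarrow$ (6) is immediate, since $\mathcal B_{\nu,0}(B_X)$ is a closed subspace of $\mathcal B_\nu(B_X)$ with the same target.

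The heart of the argument, and the step I expect to be the \emph{main obstacle}, is the necessity (6) $\Rightarrow$ (4). Here I would feed the test functions of Section 5 into the bounded operator $W^0_{\psi,\varphi}$: by Proposition \ref{lem2} the functions $\beta_w,\gamma_w$ (and $\theta_{w^n}$) lie in $\mathcal B_{\nu,0}(B_X)$ with norms bounded independently of $w$, so $\|W^0_{\psi,\varphi}\beta_w\|_{\mathcal B_\mu}$ and $\|W^0_{\psi,\varphi}\gamma_w\|_{\mathcal B_\mu}$ are each $\lesssim\|W^0_{\psi,\varphi}\|$. The delicate point is that the product rule couples the two target quantities in a single expression, so one test function alone controls only a combination of (\ref{B_4a}) and (\ref{B_4b}); to decouple them I would, for each fixed $y$ with $\|\varphi(y)\|$ bounded below, take $w=\varphi(y)$ and exploit the prescribed value and gradient of $\beta_w,\gamma_w$ at $\varphi(y)$. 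Using (\ref{test_g2}), which gives $g(t)\asymp\nu(t)^{-1}$ and hence $\int_0^r g\asymp\int_0^r\nu^{-1}$, one linear combination isolates $\mu(y)|\psi(y)|\|R\varphi(y)\|\,\nu(\varphi(y))^{-1}$ and the other isolates $\mu(y)|R\psi(y)|\max\{1,\int_0^{\|\varphi(y)\|}\tfrac{dt}{\nu(t)}\}$. Taking the supremum over $y$ yields $M_{\psi,R\varphi},M_{R\psi,\varphi}\lesssim\|W^0_{\psi,\varphi}\|$, which is both (6) $\Rightarrow$ (4) and the lower bound $\gtrsim$ in (\ref{eq_Thm6.2}); the region where $\|\varphi(y)\|$ is small is elementary, since there $\nu$ is bounded below and $R\psi,R\varphi$ are locally bounded.

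Finally, the value term is recovered by testing against the constant $1\in\mathcal B_{\nu,0}(B_X)$ together with a test function large at $\varphi(0)$, and the ``for some $m\ge2$'' formulation of the asymptotic is obtained by replacing $B_X$ with a fixed $\BB_m$ throughout via Proposition \ref{prop_OSm}, the finite-dimensional computations being exactly the ones effectively carried out. The genuine work lies in fixing the correct normalizations of $\beta_w$ and $\gamma_w$ so that the two terms separate cleanly; once that is done, both inequalities in (\ref{eq_Thm6.2}) follow from the sufficiency and necessity estimates above.
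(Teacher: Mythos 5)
Your cycle (4) $\Rightarrow$ (5) $\Rightarrow$ (6) $\Rightarrow$ (4) is, in substance, the paper's argument: the sufficiency step combines the product estimate (\ref{R_1}), the growth bound (\ref{eq_5.1}) and the $R$/$\nabla$ norm equivalence of Theorem \ref{thm_main}, and the necessity step feeds the Section 5 test functions into $W^0_{\psi,\varphi}$, separating the two quantities through the different value/gradient profiles of $\beta_w$ and $\gamma_w$ at $w=\varphi(y)$ (the paper decouples sequentially: first $M_{\psi,R\varphi}$ via $\beta_{w^n}$, then $M_{R\psi,\varphi}$ via $\gamma_{w^n}$ using the already-established finiteness of $M_{\psi,R\varphi}$; your ``linear combination'' is the same idea). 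The genuine gap is your claim that (1) $\Leftrightarrow$ (2) $\Leftrightarrow$ (3) $\Leftrightarrow$ (4) is finite-dimensional bookkeeping in the pattern of Proposition \ref{prop_OSm}. Conditions (1)--(3) carry no uniformity: (3) asserts only that each $M^{(k)}$ is finite, and a limiting argument yields no more than $M_{\psi,R\varphi}\le\sup_{k}M^{(k)}_{\psi,R\varphi}$, which may be infinite. Concretely, for $\mu(t)=1-t$, $\nu(t)=(1-t)^{2}$, $\psi\equiv1$ and $\varphi(z)=\sum_{j}\lambda_{j}z_{j}e_{j}$ with $\lambda_{j}\uparrow1$, every $M^{(k)}_{\psi,R\varphi}$ is finite (since $\|\varphi_{(k)}(y)\|\le\lambda_{k}$ keeps $\nu(\varphi_{(k)}(y))$ bounded below), while evaluation at $y=\lambda_{k}e_{k}$ gives $M_{\psi,R\varphi}\gtrsim\sup_{k}(1-\lambda_{k})^{-1}=\infty$; so (3) $\Rightarrow$ (4) cannot be obtained by passage to the limit. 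Similarly, (2) is finiteness for each fixed $F$ with no supremum over $F$, and (1) concerns the single subspace $\text{span}\{e_{1},\dots,e_{m}\}$. Proposition \ref{prop_OSm} is of no help here: its nontrivial directions require a supremum over \emph{all} orthonormal systems $x\in OS_{m}$ (or over all finite $F$ simultaneously), and the quantities (\ref{B_1a})--(\ref{B_4b}) are not Bloch seminorms of $\psi$ or of $\varphi$, so the proposition does not apply to ``$\psi,\varphi$ themselves'' at all. This is exactly why the paper never proves these implications directly: it proves (1) $\Rightarrow$ (5), running the sufficiency estimate on every slice $z_{x}=\sum_{k}z_{k}x_{k}$, $x\in OS_{m}$ (so the finite-dimensional hypothesis is used uniformly over all slices, which is how it must be read for the implication to hold), and then applies the slice identity (\ref{eq_Prop4.1}) to the \emph{image} function $W_{\psi,\varphi}f$; the equivalence of (1)--(4) is recovered only a posteriori from the cycle (1) $\Rightarrow$ (5) $\Rightarrow$ (6) $\Rightarrow$ (4) $\Rightarrow$ (2) $\Rightarrow$ (1). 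Your plan needs the same reorganization, otherwise the implications out of (1), (2), (3) remain unproved.

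A smaller but real gap: in (6) $\Rightarrow$ (4) you dismiss the region where $\|\varphi(y)\|$ stays away from $1$ as elementary because ``$R\psi,R\varphi$ are locally bounded.'' Local boundedness gives nothing as $\|y\|\to1$: a priori $\|R\varphi(y)\|$ can grow like $(1-\|y\|)^{-1}$ and $|\psi(y)|$ like $\int_{0}^{\|y\|}dt/\mu(t)$, so $\mu(y)|\psi(y)|\,\|R\varphi(y)\|$ need not be bounded on that region. One still needs test functions there: the paper tests with the coordinate functions $z\mapsto z_{j}$ (better, with $z\mapsto\langle z,w\rangle$ for all $\|w\|=1$, which also repairs the coordinatewise-versus-norm slip in the paper's own contradiction step) to obtain $\mu(y)|\psi(y)|\,\|R\varphi(y)\|\lesssim\|W^{0}_{\psi,\varphi}\|+\|\psi\|_{s\mathcal B_{\mu}(B_X)}$ before the $\beta_{w}$/$\gamma_{w}$ argument takes over.
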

\begin{proof}
It is clear  that (4) $ \Rightarrow $ (2) $ \Rightarrow $ (1) and (4) $ \Rightarrow $ (3). Since $ \mathcal B_{\nu,0}(B_X) \subset \mathcal B_\nu(B_X)$    the implication (5) $\Rightarrow$ (6)   is obvious.

\medskip
(1) $\Rightarrow$ (5): Assume that $M^{[m]}_{R\psi, \varphi} < \infty$ and  $M^{[m]}_{\psi,R\varphi} < \infty $ for some $ m\ge 2. $ For each $ x \in OS_m $ we write $ z_x := \sum_{k=1}^mz_kx_k. $ Note that $ \|z_x\| = \|z_{[m]}\| $ and hence $ \mu^{[m]}(z_{[m]}) = \mu^{[m]}(z_x). $
By (\ref{eq_5.1}) 
and
 (\ref{nabla_1}), for every $ f \in \mathcal{B}_\nu(B_X) $ 
 and for every $ x \in OS_m $
  we have 
$$
\aligned
&\|(W_{\psi,\varphi}(f))_x\|_{s\mathcal{B}_{\mu^{[m]}}(\BB_m)} =    \sup_{z_x \in \BB_m}\mu^{[m]}(z_x)\|R\psi.(f \circ\varphi)]_x(z_{[m]})\| \\
&= \sup_{z_x \in \BB_m}\mu^{[m]}(z_x)\Big[|(f\circ\varphi)_x(z_{[m]}))|\|R\psi^{[m]}(z_x)\| \\
&\quad +  |\psi^{[m]}(z_x)|\big\|\nabla_{\varphi^{[m]}(z_x)}(f (\varphi^{[m]}(z_x))\big\|\|R\varphi^{[m]}(z_x)\|\Big]  \\
&= \sup_{z_x \in \BB_m}\mu^{[m]}(z_x)\Big[|f(\varphi^{[m]}(z_x))|\|R\psi^{[m]}(z_x)\| \\
&\quad +  |\psi^{[m]}(z_x)|\|\nabla_{\varphi^{[m]}(z_x)}f(\varphi^{[m]}(z_x))\|\|R\varphi^{[m]}(z_x)\|\Big]  \\
&\le \sup_{z_x \in \BB_m} \mu^{[m]}(z_x)\|R\psi^{[m]}(z_x)\|  \max\bigg\{1, \int_0^{\|\varphi^{[m]}(z_x)\|}\dfrac{dt}{\nu(t)}\bigg\}\|f\|_{\mathcal B_\nu(B_X)} \\
&\quad + \sup_{z_x \in \BB_m}\mu^{[m]}(z_x) |\psi^{[m]}(z_x)||\nabla_{\varphi^{[m]}(z_x)}f(\varphi^{[m]}(z_x))|\|R\varphi^{[m]}(z_x)\| \\
&\le \sup_{z_x \in \BB_m} \mu^{[m]}(z_x)\|R\psi^{[m]}(z_x)\|  \max\bigg\{1, \int_0^{\|\varphi^{[m]}(z_x)\|}\dfrac{dt}{\nu(t)}\bigg\}\|f\|_{\mathcal B_\nu(B_X)} \\
&\quad + \sup_{z_x \in \BB_m}\frac{\mu^{[m]}(z_x) |\psi^{[m]}(z_x)|\|R\varphi^{[m]}(z_x)\|}{\nu^{[m]}(\varphi^{[m]}(z_x))} \nu^{[m]}(\varphi^{[m]}(z_x)) \|\nabla_{\varphi^{[m]}(z_x)}f(\varphi^{[m]}(z_x))\| \\
 &\lesssim (M^{[m]}_{R\psi, \varphi} + M^{[m]}_{\psi, R\varphi})\|f\|_{\mathcal B_\nu(B_X)}.
\endaligned$$
Consequently, by (\ref{eq_Prop4.1}) we have
	\begin{equation}\label{eq_Thm6.1a}
\aligned
\|W_{\psi, \varphi}(f)\|_{s\mathcal B_\mu(B_X)} &=\sup_{x \in OS_m}\|(W_{\psi,\varphi}(f))_x\|_{s\mathcal{B}_\mu^{[m]}(\BB_m)} \\
&\lesssim (M^{[m]}_{R\psi, \varphi} + M^{[m]}_{\psi, R\varphi})\|f\|_{\mathcal B_\nu(B_X)}.
\endaligned
\end{equation}
Thus, (5) is proved.  

\medskip
(3) $\Rightarrow$ (5): Use  an argument analogous and an estimate to the previous one.

\medskip
(6) $\Rightarrow$ (4): Suppose $W^0_{\psi, \varphi}: \mathcal B_{\nu,0}(B_X) \to \mathcal B_{\mu}(B_X)$ is bounded.

First, it is obvious that $\psi \in \mathcal B_\mu(B_X).$ Indeed, by considering the function $f = 1 \in \mathcal B_{\nu,0}(B_X)$ it easy to see that
$$ \sup_{z \in B_X}\mu(z)|R\psi(z)| = \|\psi\|_{s\mathcal B_\mu(B_X)}  = \|W^1_{\psi,\varphi}(1)\|_{\mathcal B_\mu(B_X)} \le \|W^0_{\psi, \varphi}\|  < \infty. $$

Now, 
we will prove $ M_{\psi, R\varphi} < \infty.$ 

For each $ k \in \Gamma $
consider the test function $ \gamma_k \in   \mathcal B_{\nu}(B_X)  $ given by $ \gamma_{k}(z) := z_k $ for every $ z \in B_X. $ Then, since $ W^0_{\psi,\varphi} $ is bounded we have
\[\begin{aligned}
	\|W^0_{\psi,\varphi}(\gamma_k)\|_{\mathcal B_\mu(B_X)} &= \sup_{z\in B_X}\mu(z)\|R(\psi \cdot(\gamma_k\circ \varphi))(z)\| \\
	&= \sup_{z\in B_X}\mu(z)\|R(\psi\cdot\varphi)(z)\| \\
	&= \sup_{z\in B_X}\mu(z)\|R\psi(z)\varphi(z) + \psi(z)R\varphi(z)\|.
\end{aligned}  \]
This implies that $ \psi\cdot\varphi_k \in \mathcal B_{\mu}(\B_X). $

Assume the contrary, that $ M_{\psi,R\varphi} = \infty. $   Then there exists   $ \{z^n\}_{n\ge1} \subset B_X $ such that
\[ \frac{\mu(z^n)|\psi(z^n)|
	\|R\varphi(z^n)\|}{\nu(\varphi(z^n))} \ge n \quad \forall n\ge1.\]
We claim that $ \liminf_{n\to\infty}\|\varphi(z^n)\| := r > 0. $ Indeed, if this was not the case, since $\nu$ is positive, continuous and $ \lim_{\|z\|\to 1}\nu(z) = 0 $ we get $ \limsup_{n\to \infty}\nu(\varphi(z^n)) > 0. $ Therefore, from the estimate
\[\begin{aligned}
	\|\mu(z^n)R\psi&(z^n)\varphi(z^n) + \mu(z^n)\psi(z^n)R\varphi(z^n)\| \\
	&\ge \|\mu(z^n)\psi(z^n)R\varphi(z^n)\| - \|\mu(z^n)R\psi(z^n)\varphi(z^n)\|  \to \infty,
\end{aligned} \]
we would have $ \psi\cdot\varphi_k \notin \mathcal B_{\mu}(B_X) $ for some $ k \in \Gamma. $ We obtain a contradiction.

Therefore, we may assume that there exists some $ \eta > 0  $ such that $ |\varphi(z^n)| >\eta $ for every $ n\ge1. $ Then,
putting $ w^n = \varphi(z^n)$ we can consider the test function $\beta_{w^n}$ given by  (\ref{test_func2}).
Then, since $ \beta_{w^n}(0) = 0, $ we have 
\[ \nu(z)\|R\beta_{w^n}(z)\| = \nu(z)\|w^n\| |g(\langle z, w^n\rangle)| \le \nu(z)g(\|z\|) \le C_2. \]
By the boundedness of $ W^0_{\psi,\varphi}, $ there exists $ C_W>0 $ such that
\begin{equation}\label{eq_bounded}
	\|W^0_{\psi,\varphi}(\beta_{w^n})\|_{\mathcal B_\mu(B_X)} \le C_W\|\beta_{w^n}\|_{\mathcal B_\nu(B_X)} \le C_WC_2.
\end{equation} 
By Lemma \ref{lem_weight} we have
\begin{equation}\label{eq_frac_nu}
	\frac{\nu(\|w^n\|)}{\nu(\|w^n\|^2)} \ge C_\mu\quad \forall n\ge 1.
\end{equation} 
This implies that
\begin{equation} \begin{aligned}\label{est_contrad_1}
		\|W^0_{\psi,\varphi}&(\beta_{w^n})\|_{\mathcal B_\mu(B_X)} \\
		&\ge \|\mu(z^n)R\psi(z^n)\beta_{w^n}(\varphi(z^n)) \\
		&\quad+ \mu(z^n)\psi(z^n)\nabla\beta_{w^n}(\varphi(z^n))R\varphi(z^n)\| \\
		&\ge \|\mu(z^n)\psi^{[F]}(z^n)\nabla\beta_{w^n}(\varphi(z^n))R\varphi(z^n)\| \\
		&\quad - \|\mu(z^n)R\psi(z^n)\beta_{w^n}(\varphi(z^n))\| \\
		&\ge \dfrac{\mu(z^n)|\psi(z^n)| \|R\varphi(z^n)\|}{\nu(\|\varphi(z^n)\|)}\nu(\|w^n\|^2)g(\|w^n\|^2)\dfrac{\nu(\|w^n\|)}{\nu(\|w^n\|^2)} \\
		&\quad - \|\mu(z^n)R\psi(z^n)\beta_{w^n}(\varphi(z^n))\| \\
		&\ge n C_1C_\mu -  \|\mu(z^n)R\psi(z^n)\beta_{w^n}(\varphi(z^n))\|. 
	\end{aligned} 
\end{equation}
On the other hand, 
\[\begin{aligned}
\|\mu(z^n)&R\psi(z^n)\beta_{w^n}(\varphi(z^n))\| \\
	&\le 	\|W^0_{\psi,\varphi}(\beta_{w^n})\|_{\mathcal B_\mu(B_X)}  + \|\psi\|_{\mathcal B_{\mu}(B_X)}\nu(\|w^n\|^2)|g(\|w^n\|^2) \\
	&\le \|W^0_{\psi,\varphi}(\beta_{w^n})\|_{\mathcal B_\mu(B_X)}  + \|\psi\|_{\mathcal B_{\mu}(B_X)}C_2 \\
	&\le C_2C_W + \|\psi\|_{\mathcal B_{\mu}(B_X)}C_2.
\end{aligned}\]
Thus, 
\[ 2C_2C_W  + \|\psi\|_{\mathcal B_{\mu}(B_X)}C_2  \ge nC_1C_\mu \to \infty\quad \text{as}\ n \to \infty.\]
This is a contradiction to (\ref{eq_bounded}).

Finally, we prove $ M_{R\psi,\varphi} < \infty.$
Otherwise, we can find  $ \{z^n\}_{n\ge1} \subset B_X $ such that 
\[ \mu(z^n)\|R\psi(z^n)\|\int_0^{\|\varphi(z^n)\|}\dfrac{dt}{\nu(t)} > n. \]
By $ \psi \in \mathcal B_{\mu}(B_X) $ this implies that 
\[ \int_0^{\|\varphi(z^n)\|}\dfrac{dt}{\nu(t)} > \frac{n}{\|\psi\|_{\mathcal B_{\mu}(B_X)}}. \]
Put $ w^n = \varphi(z^n). $ 
Then, by (\ref{test_g2}), for every $ n\ge1 $ we have
\begin{equation}\label{est_gamma2}
	\begin{aligned}
		\dfrac{1}{C_1}\int_0^{\|w^n\|}g(t)dt   \ge \int_0^{\|w^n\|}\dfrac{dt}{\nu(t)} > \frac{n}{\|\psi\|_{\mathcal B_{\mu}(B_X)}}.
	\end{aligned} 
\end{equation}  
Therefore, $ \sup_{n\ge1}\int_0^{\|w^n\|}g(t)dt = \infty,$
hence,
\[ \liminf_{n\to\infty}\|\varphi(z^n)\| = \liminf_{n\to\infty}\|w^n\|  > 0. \]
Then, we can consider the   sequence $ \{\gamma_{w^n}\}_{n\ge1} $ defined  by (\ref{test_func3}). 	By (\ref{est_gamma2}) and (\ref{test_g2a}), for every $ n\ge 1, $ we have
\begin{equation}\label{es_gamma_w1}
\begin{aligned}
	|\gamma_{w^n}(\varphi(z^n))| &=  \Bigg|\dfrac{\int_0^{\langle w^n,w^n\rangle}g(t)dt}{\int_0^{\|w^n\|^2}g(t)dt} \int_0^{\langle w^n,w^n\rangle}g(t)dt\Bigg| \\
	&\ge \Bigg|\dfrac{\int_0^{\langle w^n,w^n\rangle}g(t)dt}{\int_0^{\|w^n\|^2}g(t)dt}\frac{1}{C_3} \int_0^{\|w^n\|}g(t)dt\Bigg| \\
	&\ge \Bigg|\dfrac{\int_0^{\|w^n\|}g(t)dt}{\int_0^{\|w^n\|^2}g(t)dt}\Bigg| \frac{C_1}{C_3}\int_0^{\|w^n\|}\dfrac{dt}{\nu(t)} \\
	&=  \frac{C_1}{C_3}\int_0^{\|w^n\|}\dfrac{dt}{\nu(t)}.
\end{aligned} 
\end{equation}
Then, since $W^0_{\psi,\varphi}$  is bounded and $M_{\psi,R\varphi} < \infty,$ for every $ n\ge 1 $ we have
	\begin{equation}\label{es_gamma_w2}
	 \begin{aligned}
	n<	\mu(z^n)&\|R\psi(z^n)\|\int_0^{\|w^n\|}\dfrac{dt}{\nu(t)} \\
		&\le \frac{C_3}{C_1}\mu(z^n)\|R\psi(z^n)\||\gamma_{w^n}(\varphi(z^n))| \\
		&\le  \frac{C_3}{C_1}\bigg[\mu(z^n)\|RW^0_{\psi, \varphi}\gamma_{w^n}(z^n)\|  + \frac{\mu(z^n)|\psi(z^n)|\|R\varphi(z^n)\|}{\nu(\varphi(z^n))}\|\gamma_{w^n}\|_{\mathcal B_\nu(B_X)}\bigg] \\
		 &\le  \frac{C_3}{C_1}\bigg[\|W^0_{\psi, \varphi}\gamma_{w^n}\|_{\mathcal B_\mu(X)}  + \frac{\mu(z^n)|\psi(z^n)|\|R\varphi(z^n)\|}{\nu(\varphi(z^n))}\|\gamma_{w^n}\|_{\mathcal B_\nu(B_X)}\bigg] \\
		 	 &\le  \frac{C_3}{C_1}\bigg[\|W^0_{\psi, \varphi}\| \|\gamma_{w^n}\|_{\mathcal B_\nu(X)}  + \frac{\mu(z^n)|\psi(z^n)|\|R\varphi(z^n)\|}{\nu(\varphi(z^n))}\|\gamma_{w^n}\|_{\mathcal B_\nu(B_X)}\bigg] <\infty.
	\end{aligned} 
	\end{equation} 
It is impossible. Therefore, $ M_{R\psi,\varphi}<\infty. $

%

Finally,  combining    the above  estimates   gives  (\ref{eq_Thm6.2}). 
The proof of Theorem is completed.
\end{proof}

Next, we will touch the characterizations for  the boundedness of the operators from $ \mathcal B_{\nu,0}(B_X)$  to  $\mathcal B_{\mu,0}(B_X).$

\begin{thm}\label{thm2_2} Let $\psi \in H(B_X)$ and $\varphi   \in S(B_X).$ 
	   Let  $ \mu, \nu $ be   normal weights on $ B_X. $  Then   the  following are equivalent:
	\begin{enumerate}[\rm(1)]
		\item  $\psi^{[m]}, \psi^{[m]} \cdot\varphi^{[m]}_j \in \mathcal B_{\mu^{[m]},0}(\BB_m),$ 
		$M^{[m]}_{R\psi,\varphi}<\infty, $ $M^{[m]}_{\psi,R\varphi}  <\infty$  for some  $ m \ge 2$ and  for every $ j\in \Gamma; $ 
			\item $\psi^{[F]}, \psi^{[F]} \cdot\varphi^{[F]}_j \in \mathcal B_{\mu^{[F]},0}(\BB_{[F]}),$  
			$ M^{[F]}_{R\psi,\varphi} <\infty,$  $ M^{[F]}_{\psi,R\varphi} <\infty$
			for every  $ F\subset \Gamma$ finite and  for every $ j\in \Gamma; $
					\item    $\psi, \psi \cdot\varphi_j \in \mathcal B_{\mu,0}(B_X),$  $  M^{(k)}_{R\psi,\varphi} <\infty, $ $ M^{(k)}_{\psi,R\varphi} <\infty $    for every $ k\ge1, $ and for every $ j\in \Gamma; $  
						\item    $\psi, \psi \cdot\varphi_j \in \mathcal B_{\mu,0}(B_X),$ $  M_{R\psi,\varphi}  <\infty,$  $ M_{\psi,R\varphi}  <\infty;$    for every $ j\in \Gamma; $  
		\item  $W^{0,0}_{\psi, \varphi}: \mathcal B_{\nu,0}(B_X) \to \mathcal B_{\mu,0}(B_X)$     is bounded. 	
	\end{enumerate}
	In this case, the   asymptotic relation  for 	$ \|W^{0,0}_{\psi,\varphi}\|  $ is as (\ref{eq_Thm6.2}).  
\end{thm}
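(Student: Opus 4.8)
The plan is to run the whole proof in parallel with Theorem \ref{thm6_1}, simply adjoining to each finiteness condition the relevant little-oh memberships of $\psi$ and of the $\psi\cdot\varphi_j$. The chain $(1)\Leftrightarrow(2)\Leftrightarrow(3)\Leftrightarrow(4)$ for the quantities $M_{R\psi,\varphi}$ and $M_{\psi,R\varphi}$ is inherited verbatim from Theorem \ref{thm6_1}, and the equivalences among the membership conditions $\psi^{[m]}\in\mathcal B_{\mu^{[m]},0}(\BB_m)$, $\psi\in\mathcal B_{\mu,0}(B_X)$ (and likewise for each $\psi\cdot\varphi_j$) follow by applying the finite-coordinate reduction of Proposition \ref{prop_OSm_0} to the scalar functions $\psi$ and $\psi\cdot\varphi_j$. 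Thus the genuine content is the equivalence of condition (4) with the boundedness statement (5); the asymptotic formula for $\|W^{0,0}_{\psi,\varphi}\|$ is then identical to \eqref{eq_Thm6.2}, since the norm estimates are untouched by the little-oh requirements.

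For the necessity $(5)\Rightarrow(4)$: if $W^{0,0}_{\psi,\varphi}$ maps $\mathcal B_{\nu,0}(B_X)$ boundedly into $\mathcal B_{\mu,0}(B_X)$, then a fortiori it maps into $\mathcal B_\mu(B_X)$, so the implication $(6)\Rightarrow(4)$ of Theorem \ref{thm6_1} gives $M_{R\psi,\varphi}<\infty$ and $M_{\psi,R\varphi}<\infty$. The little-oh conditions are read off by feeding distinguished functions to the operator: the constant $1\in\mathcal B_{\nu,0}(B_X)$ yields $\psi=W_{\psi,\varphi}(1)\in\mathcal B_{\mu,0}(B_X)$, and each coordinate functional $\gamma_j(z):=z_j$, which lies in $\mathcal B_{\nu,0}(B_X)$ because $\nu(z)|R\gamma_j(z)|\le\nu(z)\|z\|\to0$, yields $\psi\cdot\varphi_j=W_{\psi,\varphi}(\gamma_j)\in\mathcal B_{\mu,0}(B_X)$ for every $j\in\Gamma$.

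The core is the sufficiency $(4)\Rightarrow(5)$. First, Theorem \ref{thm6_1} already guarantees that $W_{\psi,\varphi}\colon\mathcal B_\nu(B_X)\to\mathcal B_\mu(B_X)$ is bounded, hence so is its restriction to $\mathcal B_{\nu,0}(B_X)$ with values in $\mathcal B_\mu(B_X)$; it only remains to show the image lands in the closed subspace $\mathcal B_{\mu,0}(B_X)$. I would check this first on a polynomial $p$ depending on finitely many coordinates, say $z_1,\dots,z_r$. The product rule for the radial derivative gives
\[
R\big(\psi\cdot(p\circ\varphi)\big)(z)=R\psi(z)\,p(\varphi(z))+\psi(z)\sum_{j=1}^{r}\partial_{j}p(\varphi(z))\,R\varphi_{j}(z).
\]
Since $p$ and its partials are bounded on $B_X$, the first term is dominated by a constant times $\mu(z)\|R\psi(z)\|\to0$ (as $\psi\in\mathcal B_{\mu,0}(B_X)$), and the second is a \emph{finite} sum whose typical summand is bounded by $\mu(z)|\psi(z)|\,|R\varphi_{j}(z)|\to0$; the latter follows from $\psi\cdot\varphi_j\in\mathcal B_{\mu,0}(B_X)$ together with $\psi\in\mathcal B_{\mu,0}(B_X)$ via $R(\psi\varphi_j)=R\psi\cdot\varphi_j+\psi\cdot R\varphi_j$ and $|\varphi_j|\le1$. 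Hence $W_{\psi,\varphi}(p)\in\mathcal B_{\mu,0}(B_X)$. To pass to all of $\mathcal B_{\nu,0}(B_X)$ I would use that the polynomials are dense there—obtained by combining the finite-coordinate approximation of Proposition \ref{prop_OSm_0} with the classical density of polynomials in each finite-dimensional little Bloch space via dilation. As $W^{0,0}_{\psi,\varphi}$ is continuous into $\mathcal B_\mu(B_X)$ and $\mathcal B_{\mu,0}(B_X)$ is closed there, mapping the dense polynomial subspace into $\mathcal B_{\mu,0}(B_X)$ forces the whole operator into $\mathcal B_{\mu,0}(B_X)$, giving (5); the estimate \eqref{eq_Thm6.2} then transfers unchanged.

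The main obstacle is precisely the infinite dimensionality. For a general $f\in\mathcal B_{\nu,0}(B_X)$ the crude bound \eqref{R_1} forces the factor $\|R\varphi(z)\|=\big(\sum_{j}|R\varphi_j(z)|^2\big)^{1/2}$, which couples infinitely many coordinates, whereas the hypotheses only control each $\psi\cdot\varphi_j$ separately; one therefore cannot conclude $\mu(z)|\psi(z)|\,\|R\varphi(z)\|\to0$ directly, and the naive estimate breaks down in the region where $\|\varphi(z)\|$ stays bounded away from $1$. Restricting to polynomials collapses the offending sum to finitely many indices, and density then bridges back to the whole space—this is the mechanism that circumvents the difficulty.
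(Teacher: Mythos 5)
Your implication (5)$\Rightarrow$(4) is exactly the paper's argument (the test functions $1$ and $\gamma_j(z)=z_j$ together with (6)$\Rightarrow$(4) of Theorem \ref{thm6_1}), and your product-rule computation for a polynomial in finitely many coordinates is correct as far as it goes. The proof collapses, however, at the step carrying all the weight: the claim that polynomials depending on finitely many coordinates are dense in $\mathcal B_{\nu,0}(B_X)$. This is false when $\dim X=\infty$. Fix a countable subfamily $e_1,e_2,\dots$ of the basis and set $P(z):=\sum_{j\ge1}\langle z,e_j\rangle^2$; this is a bounded $2$-homogeneous polynomial, holomorphic on $B_X$, with $\nabla P(z)=2(z_j)_j$, so $\nu(z)\|\nabla P(z)\|=2\nu(\|z\|)\|z\|\to0$ as $\|z\|\to1$, i.e. $P\in\mathcal B_{\nu,0}(B_X)$. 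But if $Q$ is any holomorphic function depending only on the coordinates $z_1,\dots,z_N$, then for $k>N$ and real $t\in(0,1)$ one has $\nabla Q(te_k)=\nabla Q(0)=:v\in\text{span}\{e_1,\dots,e_N\}$, a fixed vector orthogonal to $e_k$; hence $\|\nabla(P-Q)(te_k)\|=\sqrt{4t^2+\|v\|^2}\ge2t$ and $R(P-Q)(te_k)=2t^2$. Taking $t=1/2$ gives $\|P-Q\|_{s\mathcal B_\nu^\nabla(B_X)}\ge\nu(1/2)>0$ and $\|P-Q\|_{s\mathcal B_\nu^R(B_X)}\ge\nu(1/2)/2>0$. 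So the distance from $P$ to the set of \emph{all} finite-variable functions is bounded below by a positive constant: there is no density, and therefore no passage from polynomials to a general $f\in\mathcal B_{\nu,0}(B_X)$. (A secondary point: Proposition \ref{prop_OSm_0} is a membership criterion whose conditions quantify over all systems $x\in OS_m$; it is not an approximation statement, and it does not apply to the single-slice hypotheses $\psi^{[m]}\in\mathcal B_{\mu^{[m]},0}(\BB_m)$, so it cannot by itself deliver the equivalences among the membership conditions in (1)--(4).)

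The paper avoids density altogether, and this is the idea your proposal is missing. For a fixed $f\in\mathcal B_{\nu,0}(B_X)$ it estimates $\mu(z)\|R(\psi\cdot(f\circ\varphi))(z)\|$ directly near the sphere, splitting into the two regions $\|\varphi_{(k)}(z)\|>r_0$ and $\|\varphi_{(k)}(z)\|\le r_0$. In the first region the smallness comes from the boundary decay of $f$ \emph{itself}: choosing $r_0$ with $\nu(w)\|Rf(w)\|<\varepsilon$ for $\|w\|\ge r_0$ yields $|f(\varphi(z))-f(r_0\varphi(z)/\|\varphi(z)\|)|\lesssim\varepsilon\int_{r_0}^{\|\varphi(z)\|}\frac{dt}{\nu(t)}$, which is then absorbed by the finite quantities $M_{R\psi,\varphi}$ and $M_{\psi,R\varphi}$. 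In the second region $|f|$ and $\|\nabla f\|$ are bounded, and the smallness comes from $\psi,\ \psi\cdot\varphi_j\in\mathcal B_{\mu,0}(B_X)$, which controls $\mu(z)|\psi(z)|\|R\varphi_{(k)}(z)\|$ (only finitely many coordinates enter), followed by a limiting argument in $k$. In other words, the infinite sum $\|R\varphi(z)\|$ that you rightly identified as the obstacle is never paired with a polynomial approximant: it is handled by $f$'s own little-oh decay where $\|\varphi\|$ is large, and by the truncation $\varphi_{(k)}$ where $\|\varphi\|$ is small. Any repair of your scheme must reproduce this two-region mechanism, because the dense subspace your argument requires does not exist.
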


\begin{proof}
It is obvious  that (4) $ \Rightarrow $ (2) $ \Rightarrow $ (1) and (4) $ \Rightarrow $ (3).

\medspace
	(1) $ \Rightarrow $  (5): 
	First, we note that   if  $ \psi^{[m]}, \psi^{[m]}\cdot\varphi^{[m]}_j \in \mathcal B_{\mu^{[m]},0}(\BB_m) $   it follows from the estimate
	\[\begin{aligned}
\mu^{[m]}&(z_x)|\psi^{[m]}(z_x)|\|R\varphi^{[m]}_j(z_x)\| \\
&\le \mu^{[m]}(z)\|R(\psi\cdot\varphi^{[m]}_j)(z_x)\| + \mu^{[m]}(z_x)\|R\psi(z_x)\|\|\varphi^{[m]}_j(z_x)\| \quad \forall z_{[m]} \in \BB_m, \forall x \in OS_m
	\end{aligned}  	 \]
	that
\[\lim_{\|z_x\|\to1}\mu^{[m]}(z_x)|\psi^{[m]}(z_x)|\|R\varphi^{[m]}_j(z_x)\| = 0, \]
hence,
	\begin{equation}\label{eq_5.10}
		\lim_{\|z_x\|\to1}\mu^{[m]}(z_x)|\psi^{[m]}(z_x)|\|R\varphi^{[m]}_{(k)}(z)\| = 0\quad \forall k \ge1,
	\end{equation}
	where $  \varphi^{[m]}_{(k)}(\cdot) := (\varphi^{[m]}_1(\cdot), \ldots, \varphi^{[m]}_k(\cdot)). $

 Assume that  $M^{[m]}_{R\psi,\varphi} < \infty,$  $M^{[m]}_{\psi,R\varphi} < \infty,$   $ \psi^{[m]}, \psi^{[m]}\cdot \varphi^{[m]}_j\in \mathcal B_{\mu^{[m]},0} $ for some $ m\ge 2$ and for every $ j\in \Gamma.$    By Theorem \ref{thm6_1}, $ W^0_{\psi,\varphi}: \mathcal B_{\nu,0}(B_X) \to \mathcal B_{\mu}(B_X) $ is bounded.  It suffices to show that $ W^{0,0}_{\psi,\varphi}f \in \mathcal B_{\mu,0}(B_X) $ for every $ f \in \mathcal B_{\nu,0}(B_X). $

			Let $ f \in \mathcal B_{\nu,0}(B_X) $ be arbitrarily fixed. Let $ \varepsilon >0 $ and $ k\ge1 $ be fixed. Then there exists $ r_0 \in (1/2,1) $ such that
		\begin{equation}\label{eq_6.7a} \nu(z)\|Rf(w)\| < \frac{\varepsilon}{6(M_{R\psi,\varphi}+M_{\psi,R\varphi})}, \quad  \|w\| \ge r_0. 
		\end{equation}
	Put $ \varrho := r_0 +\frac{1-r_0}{2}. $	By (\ref{eq_5.1}) we have
		\begin{equation}\label{eq_6.01}
		\begin{aligned}
			K &:= \sup_{\|w\| \le \varrho} |f(w)| <\infty, \\
				N &:=  \sup_{\|w\|\le \varrho}\|R f(w)\| =   \sup_{\|w\|\le \varrho} \dfrac{\nu(w)\|R f(w)\|}{\nu(w)} 
			\le \dfrac{\|f\|_{\mathcal B_\nu(B_X)}}{\nu(\varrho)} < \infty\quad w \in B_X.  
		\end{aligned}	
		\end{equation}
		
		Since  $\psi^{[m]}  \in \mathcal B_{\mu^{[m]},0}(\BB_m)$  and (\ref{eq_5.10}) we can find $ \theta \in (0,1)$ such that for every $ x\in OS_m $
		\begin{equation}\label{eq_6.00}
		\begin{aligned}
		\mu^{[m]}(z_x)\|R\psi^{[m]}(z_x)\| &< \dfrac{\varepsilon}{3(K+N)}, \\
		\mu^{[m]}(z_x)|\psi^{[m]}(z_x)|\|R\varphi^{[m]}_{(k)}(z)\| &<\frac{\varepsilon}{3(K+N)} \quad\text{whenever}\   \theta < \|z_x\| < 1. 
		\end{aligned}	
		\end{equation}  
		For $ \theta < \|z_x\| < 1 $ we consider two cases:
		
		$ \bullet $ The case $ \|w^{[m]}\| := \|\varphi^{[m]}_{(k)}(z_x)\| >r_0: $ Let $ \widehat w^{[m]} =  r_0\frac{w^{[m]}}{\|w^{[m]}\|}.$ We have
		\[ \begin{aligned}
			|f(\varphi^{[m]}(z_x)) - f(\widehat w^{[m]})| &=	|f(w^{[m]}) - f(\widehat w^{[m]})|  \le \int_{r_0/\|w^{[m]}\|}^1\frac{\|Rf(tw^{[m]})\|}{t}dt \\
			&\le \frac{\|w^{[m]}\|}{r_0}\int_{r_0/\|w^{[m]}\|}^1\|Rf(tw^{[m]})\|dt \\
			&\le  \frac{\varepsilon\|w^{[m]}\|}{6(M_{R\psi,\varphi}+M_{\psi,R\varphi}) r_0}\int_{r_0/\|w^{[m]}\|}^1\frac{1}{\nu(t\|w^{[m]})\|}dt \\
			&\le \frac{\varepsilon}{3(M_{R\psi,\varphi}+M_{\psi,R\varphi})}\int_{r_0}^{\|w^{[m]}\|}\frac{1}{\nu(t)}dt.
	\end{aligned} \]
		Then, by (\ref{eq_6.7a}) we have
			\[\begin{aligned}
			&\mu^{[m]}(z_x)\|R(\psi^{[m]}(f\circ\varphi^{[m]}_{(k)})(z_z))\| \\
			& \le \mu^{[m]}(z_x)\|R\psi^{[m]}(z_x)\||f(\varphi^{[m]}_{(k)}(z_x))| + \mu^{[m]}(z_x)|\psi^{[m]}(z_x)|\|\nabla_{\varphi^{[m]}_{(k)}(z)}f(\varphi^{[m]}_{(k)}(z_x))\|\|R\varphi^{[m]}_{(k)}(z_x)\| \\
			&\le \mu^{[m]}(z_x)|R\psi^{[m]}(z_x)||f(\varphi^{[m]}_{(k)}(z_x)) - f(\widehat w^{[m]})| + \mu^{[m]}(z_x)\|R\psi^{[m]}(z_x)\||f(\widehat w^{[m]})| \\
			&\quad + \mu^{[m]}(z_x)|\psi^{[m]}(z_x)|\|\nabla_{\varphi^{[m]}_{(k)}(z)}f(\varphi^{[m]}_{(k)}(z_x))\|\|R\varphi^{[m]}_{(k)}(z_x)\| \\
			&\le  M^{[m]}_{R\psi,\varphi}\frac{\varepsilon}{3(M_{R\psi,\varphi}+M_{\psi,R\varphi})} + K\frac{\varepsilon}{3(K+N)} \\
			&\quad + \sup_{\|\varphi^{[m]}_{(k)}(w)\|>r_0}\nu(\varphi^{[m]}_{(k)}(w))\|\nabla_{\varphi^{[m]}_{(k)}(w)}f(\varphi^{[m]}_{(k)}(w))\| \sup_{w\in \BB_n}\dfrac{\mu^{[m]}(w)|\psi^{[m]}(w)|\|R\varphi^{[m]}_{(k)}(w)\|}{\nu(\varphi^{[m]}_{(k)}(w))} \\
			&= 	\dfrac{2\varepsilon}{3}	+  M^{[m]}_{\psi,R\varphi}\frac{\varepsilon}{6(M_{R\psi,\varphi}+M_{\psi,R\varphi})} <\varepsilon.
		\end{aligned}\]
	
	$ \bullet $ The case $ \|\varphi_{(m)}(z)\| \le r_0: $  By (\ref{eq_6.01}) we have
		\[ 	\begin{aligned}
		\mu^{[m]}(z_x)\|R(\psi^{[m]}&(f\circ\varphi^{[m]}_{(k)})(z_x))\| \\
		&\le \mu^{[m]}(z_x) |\psi^{[m]}(z_x)|\|R\varphi^{[m]}_{(k)}(z_x)\|\|\nabla_{\varphi^{[m]}_{(k)}(z)}f(\varphi^{[m]}_{(k)}(z_x))\| \\
		&\quad + \mu^{[m]}(z_x)\|R\psi^{[m]}(z_x)\| |f(\varphi^{[m]}_{(k)}(z_x))| \\
		&< N\frac{\varepsilon}{3(K+N)}+ K\frac{\varepsilon}{3(K+N)} <\varepsilon.
	\end{aligned}  \]
Now,  since $ \varphi^{[m]}_{(k)}(z_x) \to \varphi^{[m]}(z_x) $ as $ k\to \infty, $ combining 
two cases, we obtain
\[ \mu^{[m]}(z_x)\|R(\psi^{[m]}(f\circ\varphi^{[m]}(z_x))\|  <\varepsilon \quad\forall x\in OS_m,\ \theta < \|z_x\| < 1.\]
Finally, fix $ z \in B_X, $ $ \theta < \|z\|  <1.  $ Consider   $ x =(\frac{z}{\|z\|},x_2, \ldots, x_m) \in OS_m $ and put $ z_{[m]} := (\|z\|, 0,\ldots,0) \in \C^m. $ Then $ \theta < \|z_x\| = \|z_{[m]}\| = \|z\| < 1$  and by as (\ref{norm_z_x}) we obtain
Consequently,
\[ \mu(z)\|R(\psi(f\circ\varphi(z))\|  = \mu^{[m]}(z_x)\|R(\psi^{[m]}(f\circ\varphi^{[m]}(z_x))\| <\varepsilon.\]
hence,     $ W^{0,0}_{\psi,\varphi}(f) \in \mathcal B_{\mu,0}(B_X). $ 

\medskip
(3) $\Rightarrow$ (5): Use  an argument analogous and an estimate to the previous one.

	(5) $ \Rightarrow $ (4): 
	Assume that $ W^{0,0}_{\psi,\varphi} $ is bounded. As in the proof of (6) $ \Rightarrow $ (4) in Theorem \ref{thm6_1} we obtain that  $ \psi \in \mathcal B_{\mu}(B_X) $ and the estimates $M_{R\psi,\varphi} < \infty,$  $M_{\psi,R\varphi} < \infty$  because the test function $ \beta_{w} $ given by (\ref{test_func2}) belongs to $ \mathcal B_{\nu,0}(B_X). $
	
	It remains to check that $\psi, \psi \cdot\varphi_j \in \mathcal B_{\mu,0}(B_X)$ for every $ j\in \Gamma. $ It is clear because it is easy to see that $ \psi = W^{0,0}_{\psi,\varphi}(1), $ $\psi\cdot\varphi_j = W^{0,0}_{\psi,\varphi}(f_j), $ where $ f_j, 1 \in \mathcal B_{\nu,0}(B_X)$ given by $f_j(z) := z_j $  for every $ z \in B_X. $ 
	 
\end{proof}

Now we investigate the compactness of weighted composition operators $ W_{\psi,\varphi}. $
\begin{thm}\label{thm_compact} Let $\psi \in H(B_X), $  $\varphi \in S(B_X)$ and $ \mu, \nu $ be   normal weights on $ B_X$ such that $ \int_0^1\frac{dt}{\nu(t)} = \infty. $ Then  
		\begin{enumerate}
		\item [\rm (A)] The  following are equivalent:
		\begin{enumerate}
			\item [\rm (1)]	$ (\ref{B_3a}) \to 0, $ $ (\ref{B_3b})\to 0 $ as $ \|\varphi_{(k)}(y)\|\to1 $   for every $k\ge1, $
						\item[\rm (2)] $W_{\psi, \varphi}: \mathcal B_{\nu}(B_X) \to \mathcal B_{\mu}(B_X)$     is compact; 
			\item [\rm (3)]$W^0_{\psi, \varphi}: \mathcal B_{\nu, 0}(B_X) \to \mathcal B_{\mu}(B_X)$     is compact.
		\end{enumerate}
		\item[\rm (B)] Under the additional assumption   that  there exists $ m\ge 2 $ such that     
		\begin{equation}\label{phi_relative_compact} 
			\begin{aligned}
				B[\varphi^{[m]}, r] &:=\big\{\varphi^{[m]}(y): \|\varphi^{[m]}(y)\| < r,  y \in \BB_m \big\}\\
				&\quad\ \ \text{is relatively compact for every $ 0\le r<1,$ }
			\end{aligned} 
		\end{equation}
		the assertions (2), (3) and  following are equivalent:
				\begin{enumerate}
			\item[\rm(4)]  $ (\ref{B_1a}) \to 0, $ $ (\ref{B_1b})\to 0 $ as $ \|\varphi^{[m]}(y)\|\to1; $ 
				\item[\rm(5)]  $ (\ref{B_2a}) \to 0, $ $ (\ref{B_2b})\to 0 $ as $ \|\varphi^{[F]}(y)\|\to1 $  for every $ F \subset \Gamma $ finite;
					\item[\rm(6)]  $ (\ref{B_4a}) \to 0, $ $ (\ref{B_4b})\to 0 $ as $ \|\varphi(y)\|\to1. $ 
			\end{enumerate}
	\end{enumerate}
	\end{thm}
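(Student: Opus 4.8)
The plan is to route everything through the sequential compactness criterion of Proposition~\ref{prop_W_compact}: the operator $W_{\psi,\varphi}$ is compact precisely when $\|W_{\psi,\varphi}(f_n)\|_{\mathcal B_\mu(B_X)}\to0$ for every bounded sequence $\{f_n\}\subset\mathcal B_\nu(B_X)$ that converges to $0$ uniformly on compact subsets of $B_X$. For part~(A) the implication (2)~$\Rightarrow$~(3) is immediate, since $\mathcal B_{\nu,0}(B_X)$ is a closed subspace of $\mathcal B_\nu(B_X)$ and the restriction of a compact operator is compact; it therefore remains to close the cycle (3)~$\Rightarrow$~(1)~$\Rightarrow$~(2).

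For (3)~$\Rightarrow$~(1) I argue by contradiction. If, say, the quantity (\ref{B_3b}) does not tend to $0$ as $\|\varphi_{(k)}(y)\|\to1$, I choose $z^n$ with $\|\varphi_{(k)}(z^n)\|\to1$ along which (\ref{B_3b}) stays bounded below, and set $w^n:=\sum_{j=1}^k\varphi_j(z^n)e_j$, so that $\|w^n\|=\|\varphi_{(k)}(z^n)\|\to1$ and $\langle\varphi(z^n),w^n\rangle=\|\varphi_{(k)}(z^n)\|^2$. Testing the operator on $\gamma_{w^n}$ from (\ref{test_func3}), Proposition~\ref{lem2} gives boundedness in $\mathcal B_\nu(B_X)$, while the hypothesis $\int_0^1 dt/\nu(t)=\infty$ and Proposition~\ref{lem3}(2) give $\gamma_{w^n}\to0$ uniformly on compacta; hence compactness forces $\|W^0_{\psi,\varphi}(\gamma_{w^n})\|_{\mathcal B_\mu(B_X)}\to0$. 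Evaluating $R(\psi\cdot(\gamma_{w^n}\circ\varphi))$ at $z^n$ and invoking the lower bound (\ref{test_g2}) together with Lemma~\ref{lem_weight} — exactly as in the estimates (\ref{est_contrad_1}) and (\ref{es_gamma_w1})--(\ref{es_gamma_w2}) used in Theorem~\ref{thm6_1} — produces a lower bound proportional to (\ref{B_3b}) evaluated at $z^n$, a contradiction. The possible failure of (\ref{B_3a}) is treated symmetrically with the same family $\gamma_{w^n}$.

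For (1)~$\Rightarrow$~(2) I fix a bounded sequence $\{f_n\}$ in $\mathcal B_\nu(B_X)$ with $f_n\to0$ uniformly on compacta and apply the radial-derivative estimate (\ref{R_1}) of Lemma~\ref{nabla}. Given $\varepsilon>0$, condition~(1) provides (for a fixed $k$) a radius $r_0\in(0,1)$ such that both (\ref{B_3a}) and (\ref{B_3b}) are $<\varepsilon$ whenever $\|\varphi_{(k)}(y)\|>r_0$; on this boundary region I bound $|f_n(\varphi(y))|$ by (\ref{eq_5.1}) and $\nu(\varphi(y))\|\nabla f_n(\varphi(y))\|$ by $\|f_n\|_{\mathcal B_\nu(B_X)}$ (using Theorem~\ref{thm_main} to pass between the $\nabla$- and $R$-norms), so the contribution is $\lesssim\varepsilon\sup_n\|f_n\|_{\mathcal B_\nu(B_X)}$. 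On the complementary region $\{\|\varphi_{(k)}(y)\|\le r_0\}$ the point $\varphi(y)$ stays in a fixed compact set, so $f_n(\varphi(y))$ and, by Cauchy estimates, $\nabla f_n(\varphi(y))$ tend to $0$ uniformly, while the factors $\mu\|R\psi\|$ and $\mu|\psi|\,\|R\varphi\|/\nu(\varphi_{(k)})$ are bounded by the finiteness of $M^{(k)}_{R\psi,\varphi}$ and $M^{(k)}_{\psi,R\varphi}$ furnished by~(1); hence this part is $<\varepsilon$ for $n$ large. Taking the supremum over $y$ and letting $n\to\infty$ yields $\|W_{\psi,\varphi}(f_n)\|_{\mathcal B_\mu(B_X)}\to0$, which is compactness.

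For part~(B), the additional hypothesis (\ref{phi_relative_compact}) is precisely what lets the interior region $\{\|\varphi^{[m]}(y)\|\le r\}$ be replaced by a relatively compact target, so that after reducing to $\BB_m$ and $\BB_{[F]}$ via Proposition~\ref{prop_OSm} and the identities (\ref{eq_2.1}), (\ref{norm_z_x}), the boundary conditions (4)--(6) all feed into the same sequential criterion. Concretely I prove (6)~$\Leftrightarrow$~(2) as in part~(A) with $\varphi$ in place of $\varphi_{(k)}$, and then transfer among (4), (5), (6) using the finite-dimensional norm equality (\ref{eq_Prop4.1}), the relative compactness guaranteeing that $\varphi^{[m]}(y)$ ranges over a compact set off the boundary so that the uniform-convergence step of (1)~$\Rightarrow$~(2) applies verbatim to the truncated maps. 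I expect the principal obstacle to be the sufficiency direction: organizing the splitting of $B_X$ into boundary and interior pieces while keeping the full map $\varphi$ faithfully controlled through its truncations $\varphi_{(k)}$ (respectively $\varphi^{[m]}$), and verifying that the relative-compactness hypothesis in part~(B) genuinely upgrades the finite-dimensional boundary limits to compactness of the full operator.
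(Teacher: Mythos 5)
Your overall architecture is the paper's: everything is routed through Proposition~\ref{prop_W_compact}; necessity is proved by contradiction, testing on the functions $\gamma_{w^n}$ of (\ref{test_func3}) with Propositions~\ref{lem2}, \ref{lem3} (where $\int_0^1 dt/\nu(t)=\infty$ enters), (\ref{test_g2}) and Lemma~\ref{lem_weight}, exactly as in (\ref{est_contrad_1}) and (\ref{es_gamma_w1})--(\ref{es_gamma_w2}); sufficiency is proved by a boundary/interior splitting, with finite-dimensional compactness in (A) and hypothesis (\ref{phi_relative_compact}) in (B). The organizational differences (you do (A) before (B), and you prove (3)$\Rightarrow$(1) directly with $w^n=\varphi_{(k)}(z^n)$ instead of the paper's (3)$\Rightarrow$(6) with $w^n=\varphi(z^n)$ followed by the easy (6)$\Rightarrow$(1)) are immaterial, and those parts of your argument are sound.

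However, your sufficiency step (1)$\Rightarrow$(2) in part (A) has a genuine gap: you systematically conflate the range truncation $\varphi_{(k)}$, which is what condition (1) controls, with the full map $\varphi$, which is what the operator involves. It shows up twice. First, on the interior region $\{y:\|\varphi_{(k)}(y)\|\le r_0\}$ you claim ``the point $\varphi(y)$ stays in a fixed compact set''; this is false in infinite dimensions, since $\|\varphi_{(k)}(y)\|\le r_0$ constrains only the first $k$ coordinates of the image. Concretely, for the shift $\varphi(z)=\sum_{n\ge1}\langle z,e_n\rangle e_{n+k}$ one has $\varphi_{(k)}\equiv0$, so your interior region is all of $B_X$, while $f_n(z)=\langle z,e_n\rangle$ is bounded in $\mathcal B_\nu(B_X)$, tends to $0$ uniformly on compacta, and yet $\sup_{y\in B_X}|f_n(\varphi(y))|=\sup_{y\in B_X}|\langle y,e_{n-k}\rangle|=1$; so the uniform-convergence conclusion you draw there simply fails. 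Second, on the boundary region you bound $|f_n(\varphi(y))|$ via (\ref{eq_5.1}) by $\max\{1,\int_0^{\|\varphi(y)\|}dt/\nu(t)\}\,\|f_n\|_{\mathcal B_\nu(B_X)}$, but (\ref{B_3a}) only controls $\mu(y)\|R\psi(y)\|\max\{1,\int_0^{\|\varphi_{(k)}(y)\|}dt/\nu(t)\}$, and since $\|\varphi_{(k)}(y)\|\le\|\varphi(y)\|$ the inequality between the two integrals goes the wrong way. The upshot is that a splitting based on a \emph{single fixed} $k$ cannot close the argument: any correct proof must exploit condition (1) for \emph{all} $k$ and pass from the compositions $f_n\circ\varphi_{(k)}$ (whose arguments do range in the relatively compact set $B[\varphi_{(k)},r_0]\subset\C^k$ of (\ref{phi_relative_compact_1})) to $f_n\circ\varphi$ by letting $k\to\infty$, which is the (admittedly terse) content of the paper's phrase ``analogous to (4)$\Rightarrow$(2)'' and of the corresponding limiting step in the proof of Theorem~\ref{thm2_2}. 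Your part (B) is not affected by this objection, because there (\ref{phi_relative_compact}) concerns $\varphi^{[m]}$, a restriction of the \emph{domain}, so the relevant image points genuinely lie in a compact set.
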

\begin{proof} First we prove (B). 
	
	\medspace
	(B) 	The implications (6) $\Rightarrow$  (5)  $\Rightarrow$  (4) and   (2) $\Rightarrow$  (3)  are  obvious. 
	
\medspace
(4) $\Rightarrow$ (2): For each $ x \in OS_m $ we write $ z_x := \sum_{k=1}^mz_kx_k. $  It follows from the assumption (4), Theorem \ref{thm6_1} and the condition (\ref{phi_relative_compact}) that   $W_{\psi, \varphi}$   
  is bounded and $ \psi \in \mathcal B_\mu(B_X). $   

	Let $\{f_n\}_{n\ge1}$ be a bounded sequence in $\mathcal B_{\nu}(B_X)$ converging to $0$ uniformly on compact subsets of $B_X$ and fix an $\varepsilon > 0.$ 
By the hypothesis on the sequence $ \{f_n\}_{n\ge1}, $ 
there exists a positive integer $ n_0 $ such that
\[	 		 |f_n(w)| \le \frac{\varepsilon}{3K}, \quad \forall  n \ge n_0,\  \forall w \in \overline{B[\varphi^{[m]},r_0]},\]
where $K = \|\psi\|_{\mathcal B_\mu(B_X)} +\sup_{n \in \N}\|f_n\|_{\mathcal B_\nu(B_X)}.$ 

Denote $ w^{[m]} = \varphi^{[m]}(z_x).$	 Since $ \int_0^1\frac{dt}{\nu(t)} = \infty $ and $ (\ref{B_1a}) \to 0, $ $ (\ref{B_1b})\to 0 $ as $ \|\varphi^{[m]}(y)\|\to1 $, $ \psi^{[m]} \in \mathcal B_{\mu,0}(\BB_m) $ and
for any $\varepsilon > 0,$ there exists $ r_0 \in (1/2, 1) $ 	such that whenever $  r_0 < \|w^{[m]}\| < 1 $
\begin{equation}\label{limit_2compact_a}
	\begin{aligned}
& \mu^{[m]}(z_x)\|R\psi^{[m]}(z_x)\|\int_{r_0}^{\|w^{[m]}\|}\frac{dt}{\nu^{[m]}(t)} <\frac{\varepsilon}{6/C_2}\\
& \frac{\mu^{[m]}(z_x)|\psi^{[m]}(z_x)|\|R\varphi^{[m]}(z_x)\|}{\nu(\varphi^{[m]}(z_x))} <\frac{\varepsilon}{3K}.
	\end{aligned} 	 \end{equation}
	
	Now, for every $ n\ge n_0 $ and $ r_0 < \|w^{[m]}\| <1, $ with noting that  $$ \widehat w^{[m]} := r_0\frac{w^{[m]}}{\|w^{[m]}\|} \in   \overline{B[\varphi^{[m]},r_0]},$$ by (\ref{test_g2}) and (\ref{limit_2compact_a}), as in the proof (1) $ \Rightarrow $ (5) in Theorem \ref{thm6_1}   we have
	
		\begin{equation}\label{est_compact1}
		\aligned
		\mu^{[m]}(z_x)&\|R (W_{\psi,\varphi}f_n)(z_x)\|   \\
		&\le   \mu^{[m]}(z_x)\|R\psi^{[m]}(z_x)||f_n(\varphi^{[m]}(z_x))\| \\
		&\quad+  \mu^{[m]}(z_x) |\psi^{[m]}(z_x)|\|\nabla_{\varphi^{[m]}(z_x)}f_n(\varphi^{[m]}(z_x))\| \|R\varphi^{[m]}(z_x)\|  \\
		&\le \mu^{[m]}(z_x)\|R\psi^{[m]}(z_x)\| |f_n(\varphi^{[m]}(z_x)) - f_n(\widehat w^{[m]})| \\
		&\quad + \mu^{[m]}(z_x)\|R\psi^{[m]}(z_x)\| |f_n(\widehat w^{[m]})| \\
			&\quad+  \mu^{[m]}(z_x) |\psi^{[m]}(z_x)|\|\nabla_{\varphi^{[m]}(z_x)}f_n(\varphi^{[m]}(z_x))\| \|R\varphi^{[m]}(z_x)\|  \\
		&\le \mu^{[m]}(z_x)\|R\psi^{[m]}(z_x)\|\frac{\|w^{[m]}\|}{r_0}\int_{r_0/\|w^{[m]}\|}^1\|Rf(tw^{[m]})\|dt \\
		&\quad + \|\psi\|_{\mathcal B_\mu(B_X)}\frac{\varepsilon}{3K} + \frac{\varepsilon}{3K}\|f_n\|_{\mathcal B_{\nu}(B_X)} \\
		&\le \mu^{[m]}(z_x)\|R\psi^{[m]}(z_x)\|\frac{2}{C_2}\int_{r_0/\|w^{[m]}\|}^1\frac{1}{\nu(t\|w^{[m]}\|)}dt + \frac{2\varepsilon}{3}\\
&\le \mu^{[m]}(z_x)\|R\psi^{[m]}(z_x)\|\frac{2}{C_2}\int_{r_0}^{\|w^{[m]}\|}\frac{dt}{\nu^{[m]}(t)} + \frac{2\varepsilon}{3}\\
&<\frac{\varepsilon}{6/C_2}\frac{2}{C_2}+\frac{2\varepsilon}{3} =\varepsilon.
			\endaligned
	\end{equation}

	On the other hand, since $\{f_n\}_{n\ge1}$  converges to $0$ uniformly on compact subsets of $B_X,$ by Cauchy integral formula and (\ref{phi_relative_compact} ),  it is clear that
	$$\sup_	{y \in\overline{B[\varphi^{[m]}, r]}}\|\nabla_yf_n(y)\| \to 0\quad \text{as}\  n\to \infty.$$ 
	Then, in the case $ \|w^{[m]}\| \le r_0 $ with the estimate as above we obtain
	\begin{equation}\label{est_compact2}
	\mu^{[m]}(z_x)\|R (W_{\psi^{[m]},\varphi^{[m]}}f_n)(z_x)\| <\varepsilon  \quad \text{for every} \ n\ge n_0.
		\end{equation}
Finally, with the  note that the estimates (\ref{est_compact1}), (\ref{est_compact2})  are independent of $ x \in OS_m, $  we obtain 
\[  \|W_{\psi, \varphi}(f_n)\|_{\mathcal B_\mu(B_X)} = \sup_{x\in OS_m} \|W_{\psi^{[m]}, \varphi^{[m]}}((f_n)_x)\|_{\mathcal B_\mu(\BB_m)} \to 0 \quad \text{as} \ n \to \infty.\]
Hence, it implies from Proposition \ref{prop_W_compact} that $W_{\psi, \varphi}$ is compact.
	
	\medskip
	(3) $\Rightarrow$ (6): 
	Suppose $W^0_{\psi, \varphi}$ is compact. Then  clearly, $W^0_{\psi, \varphi}$ is bounded.

	Firstly, assume that  $ (\ref{B_4b})\not\to 0 $ as $ \|\varphi(z)\|\to1, $ $ z\in B_X. $ Then we can take $ \varepsilon_0 > 0 $ and a sequence $\{z^n\}_{n\ge1}$ in $ B_X $ such that
	$\|w^n\| := \|\varphi(z^n)\| \to 1$ but
	\[  \frac{\mu(z^n)|\psi(z^n)|\|R\varphi(z^n)\|}{\nu(\varphi(z^n))} \ge \varepsilon_0	\quad \text{for every} \  n =1, 2, \ldots   \]
	Consider the   sequence $ \{\gamma_{w^n}\}_{n\ge1} $ defined  by (\ref{test_func3}). By Proposition \ref{lem3}, this sequence is bounded in $\mathcal B_{\nu}(B_X)$ and converges to $0$ uniformly on compact subsets of $ B_X. $  By an argument  similar to the proof    (6) $ \Rightarrow $ (4) of Theorem \ref{thm6_1} we get the one as the estimate  (\ref{est_contrad_1}): 
	\[ \begin{aligned}
		\|W^0_{\psi,\varphi}&(\gamma_{w^n})\|_{\mathcal B_\mu(B_X)} \\
		&\ge \|\mu(z^n)R\psi(z^n)\gamma_{w^n}(\varphi(z^n)) + \mu(z^n)\psi(z^n)\nabla\gamma_{w^n}(\varphi(z^n))R\varphi(z^n)\| \\
		&\ge \|\mu(z^n)\psi(z^n)\nabla\gamma_{w^n}(\varphi(z^n))R\varphi(z^n)\| - \|\mu(z^n)R\psi(z^n)\gamma_{w^n}(\varphi(z^n))\| \\
		&= \mu(z^n)|\psi(z^n)| \|R\varphi(z^n)\|\dfrac{2\int_0^{\|\varphi(z^n)\|^2}g(t)dt}{\int_0^{\|\varphi(z^n)\|^2}g(t)dt}\|\nabla\gamma_{w^n}(\varphi(z^n))\| \\
		&\quad - \|\mu(z^n)R\psi(z^n)\gamma_{w^n}(\varphi(z^n))\| \\
		&\ge 2\dfrac{\mu(z^n)|\psi(z^n)| \|R\varphi(z^n)\|}{\nu(|\varphi(z^n)|)}\nu(\|\varphi(z^n)\|^2)g(\|\varphi(z^n)\|^2)\dfrac{\nu(\|w^n\|)}{\nu(\|w^n\|^2)} \\
		&\quad - \mu(z^n)\|R\psi(z^n)\gamma_{w^n}(\varphi(z^n))\| \\
		&\ge 2\varepsilon_0 C_1C_\mu - \mu(z^n)\|R\psi(z^n)\gamma_{w^n}(\varphi(z^n))\|. 
	\end{aligned} 
	\]
	This implies that
	\[ \begin{aligned}
		0<  2\varepsilon_0 C_1C_\mu &\le   \mu(z^n)\|R\psi(z^n)\gamma_{w^n}(\varphi(z^n))\| + \|W^0_{\psi,\varphi}(\gamma_{w^n})\|_{\mathcal  B_\mu(B_X)}  \\
				&= \|\psi\|_{\mathcal B_\mu(B_X)}|\gamma_{w^n}(\varphi(z^n))| +  \|W^0_{\psi,\varphi}(\gamma_{w^n})\|_{\mathcal  B_\mu(B_X)} \\
		&\to 0 \quad\text{as}\ n \to \infty.
	\end{aligned}\]
	This is impossible.
	
	Finally, let $ \{z^n\}_{n\ge1} $ be a sequence in $B_X $ such that $ \|w^n\| := \|\varphi(z^n)\| \to 1  $ as $ n\to \infty. $ Consider the sequence $ \{\gamma_{w^n}\}_{n\ge 1} $     as the above.  By (\ref{es_gamma_w1}), as (\ref{es_gamma_w2}), we have
\[	\begin{aligned}
		 \mu(z^n)&\|R\psi(z^n)\|\int_0^{\|w^n\|}\dfrac{dt}{\nu(t)} \\
		 & \le  \mu(z^n)\|RW^0_{\psi, \varphi}\gamma_{w^n}(z^n)\|  + \frac{\mu(z^n)|\psi(z^n)|\|R\varphi(z^n)\|}{\nu(\varphi(z^n))}\|\gamma_{w^n}\|_{\mathcal B_\nu(B_X)} 
		 \end{aligned}\]
	
%
	
	Then, by Theorem \ref{thm6_1} and Proposition \ref{prop_W_compact}, as (\ref{est_compact1}), with note that  $ (\ref{B_4b})\to 0 $ as $ \|\varphi(z)\|\to1, $  we obtain
 $ (\ref{B_4a})\to 0 $ as $ \|\varphi(z)\|\to1, $ and the proof of (B) is complete. 
	
	 Finally, we prove (A).
	
	\medspace
	(A)  Since (2) $ \Leftrightarrow $ (3) $ \Leftrightarrow $ (6) and (6) $ \Rightarrow $ (1) is obvious, it suffices to prove (1) $ \Rightarrow $ (2).
	We obtain this proof  by an estimate analogous to that used for the proof of (4) $ \Rightarrow $ (2) by using the known fact that 
	\begin{equation}\label{phi_relative_compact_1} 
		\begin{aligned}
			B[\varphi_{(k)},r_0] &= \{\varphi_{(m)}(z): \ \|\varphi_{(k)}(z)\| \le r_0 \} \subset \BB_k \subset \C^k\\
			&\quad\ \ \text{is relatively compact for every $ 0\le r_0<1 $ and $ k\ge1, $ }
		\end{aligned} 
	\end{equation}
	instead of (\ref{phi_relative_compact}).
\end{proof}
 \begin{rmk}\label{rmk_6.1}  \begin{itemize}
 		\item[\rm(a)]  The implication (3) $ \Rightarrow $ (1) was proved without appeal to the assumption (\ref{phi_relative_compact}). 
 		\item[\rm(b)] In general, the assumption (2) or (3)    does not imply   (\ref{phi_relative_compact}). This means that if (\ref{phi_relative_compact}) was moved into the assumption (1) then the equivalence of the statements in Theorem \ref{thm_compact}   is broken.
 		An illustrative example of this comment will be introduced after the following theorem:
 		 	\end{itemize}
 	 	    	   \end{rmk}
  \begin{thm}\label{thm_6.4}
Let $\psi \in H(B_X)$  and $\varphi \in S(B_X).$ Assume that $ W_{\psi, \varphi}: \mathcal B_\nu(B_X) \to \mathcal B_\mu(B_X) $ is compact. Then
	\begin{equation}\label{compact_3}   \varphi(r B_X)  \ \text{is relatively compact for every $ 0\le r < 1$}.
\end{equation}
  \end{thm}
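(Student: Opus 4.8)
The plan is to argue by contradiction, converting the relative compactness of $\varphi(rB_X)$ into a statement about the preadjoint of $W_{\psi,\varphi}$ via the linearization of Theorem~\ref{thm_1.1}. Fix $0\le r<1$ and suppose $\varphi(rB_X)$ is not relatively compact. Since $X$ is a Hilbert space, its closure then fails to be totally bounded, so there are $\varepsilon>0$ and a sequence $(z^n)\subset rB_X$ such that, after passing to a subsequence, the points $w^n:=\varphi(z^n)$ satisfy $\|w^n-w^m\|\ge\varepsilon$ for all $n\neq m$. (If one prefers an a priori bound, the Schwarz--Pick inequality $\varrho_X(\varphi(z^n),\varphi(0))\le\varrho_X(z^n,0)=\|z^n\|\le r$ confines all $w^n$ to a ball $R B_X$ with $R:=(\|\varphi(0)\|+r)/(1+\|\varphi(0)\|r)<1$, though this is not strictly needed below.)

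First I would pass to the preduals. By Proposition~\ref{prop_e1e3} the spaces $\mathcal B_\nu(B_X)$ and $\mathcal B_\mu(B_X)$ satisfy (e1)--(e2), so Theorem~\ref{thm_1.1} supplies preduals $^*\!\mathcal B_\nu,\ ^*\!\mathcal B_\mu$ together with the evaluations $\delta_z$, and exactly as in the proof of Theorem~\ref{thm_WO} one has $(W_{\psi,\varphi})^*\delta_z=\psi(z)\,\delta_{\varphi(z)}$ on the dense span of the $\delta_z$'s. Compactness of $W_{\psi,\varphi}$ yields compactness of the restricted adjoint $(W_{\psi,\varphi})^*\big|_{^*\!\mathcal B_\mu}\colon {}^*\!\mathcal B_\mu\to{}^*\!\mathcal B_\nu$. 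The sequence $(\delta_{z^n})$ is bounded in $^*\!\mathcal B_\mu$, since the pointwise estimate~(\ref{eq_5.1}) gives $\|\delta_z\|\le\max\{1,\int_0^r dt/\mu(t)\}$ whenever $\|z\|\le r$. Consequently $\big(\psi(z^n)\delta_{w^n}\big)=\big((W_{\psi,\varphi})^*\delta_{z^n}\big)$ has a norm-convergent subsequence in $^*\!\mathcal B_\nu$.

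The decisive quantitative input is that point evaluations are uniformly separated by linear functions. For a unit vector $a\in X$ put $f_a(z)=\langle z,a\rangle$; then $Rf_a=f_a$, so $f_a\in\mathcal B_\nu(B_X)$ with $\|f_a\|_{\mathcal B_\nu}\le N_\nu:=\sup_{t\in[0,1)}\nu(t)<\infty$. Choosing $a=(w^n-w^m)/\|w^n-w^m\|$ gives $\|\delta_{w^n}-\delta_{w^m}\|_{^*\!\mathcal B_\nu}\ge\|w^n-w^m\|/N_\nu\ge\varepsilon/N_\nu$, so $(\delta_{w^n})$ is itself $(\varepsilon/N_\nu)$-separated and admits no convergent subsequence. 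On the other hand $\psi=W_{\psi,\varphi}(\mathbf 1)\in\mathcal B_\mu(B_X)$ (as $\mathbf 1\in\mathcal B_\nu$ by (e1)), so (\ref{eq_5.1}) bounds $\psi$ on $rB_X$ and, along a further subsequence, $\psi(z^n)\to c\in\C$. If $c\neq0$, dividing the convergent sequence $\psi(z^n)\delta_{w^n}$ by $\psi(z^n)$ forces $(\delta_{w^n})$ to converge in norm, contradicting its separation. This settles the nondegenerate case and already proves the theorem whenever $\psi$ stays bounded away from $0$ along the extracted sequence.

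The hard part will be the degenerate case $c=0$, i.e.\ when the weight collapses along $(z^n)$: then $\psi(z^n)\delta_{w^n}\to0$ automatically and compactness yields no contradiction. To close this gap I would enlarge the pool of test functionals beyond point evaluations to the first-order evaluations $\rho_{z,v}\colon f\mapsto\langle\nabla f(z),v\rangle$, which also lie in $^*\!\mathcal B_\nu$ (they are bounded by $\|v\|/\nu(z)$ and, by the Cauchy estimates, $\tau_{co}$-continuous on $B_{\mathcal B_\nu}$); their preadjoint images read $\langle\nabla\psi(z),v\rangle\,\delta_{\varphi(z)}+\psi(z)\,\rho_{\varphi(z),\,\varphi'(z)v}$, so combining the $\delta$- and $\rho$-functionals lets one exploit compactness even when $\psi(z^n)\to0$, provided $\nabla\psi(z^n)$ does not vanish simultaneously; since $\{\psi=0\}$ is a thin analytic set, a density/continuity reduction replacing $(z^n)$ by nearby points with $\psi\neq0$ handles the remaining configurations. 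Transferring the compactness information through a degenerating weight is the genuine obstacle; the separation-plus-linearization skeleton above is otherwise routine.
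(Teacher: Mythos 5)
Your rigorous steps are sound and essentially reproduce the paper's skeleton: both arguments push point evaluations through the adjoint of $W_{\psi,\varphi}$, so that compactness turns $\{\psi(z)\delta_{\varphi(z)}:\ \|z\|\le r\}$ into a relatively compact family, and then exploit a lower bound separating $\delta_z$ from $\delta_w$. Your separation estimate, via the linear functions $f_a(z)=\langle z,a\rangle$ with $\|f_a\|_{\mathcal B_\nu}\le\sup_{t\in[0,1)}\nu(t)<\infty$, is in fact cleaner and more elementary than the paper's inequality $\tfrac12\|z-w\|\le\|\delta_z-\delta_w\|$, which is routed through the pseudohyperbolic metric, $H^\infty$ test functions and Schwarz's lemma. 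Up to and including the case $\psi(z^n)\to c\neq0$, your argument is complete and correct.

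The gap is the degenerate case $c=0$, and it is genuine: your sketched repair does not close it. The first-order functionals $\rho_{z,v}$ merely push the problem one order up (take $\psi=\langle\cdot\,,e_1\rangle^2$ and $z^n$ in the hyperplane $z_1=0$: then $\psi(z^n)=0$ and $\nabla\psi(z^n)=0$ simultaneously), and perturbing $z^n$ off the zero variety of $\psi$ produces points where $\psi\neq0$ but gives no uniform lower bound $|\psi(\tilde z^n)|\ge c>0$, which is exactly what the division step requires; the separation of the $\delta_{w^n}$ survives small perturbations, the nonvanishing does not. The degenerate case is not a technicality: for $\psi\equiv0$ the operator $W_{\psi,\varphi}=0$ is compact for every $\varphi\in S(B_X)$, so the statement is simply false without some nondegeneracy of $\psi$, and any complete proof must use $\psi\not\equiv0$ quantitatively (for instance through a two-constants/Harnack-type bound giving $\sup_{B(z^n,\rho_0)}|\psi|\ge c'>0$ on balls of fixed radius $\rho_0$ inside $rB_X$). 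For what it is worth, the paper's own proof stumbles at exactly the spot you flagged: it invokes the claim that relative compactness of $\{t\eta:\ t\in D,\ \eta\in A\}$ with $D\subset\C$ bounded forces $A$ to be relatively compact --- which is true for the full product set as soon as $D$ contains some $t_0\neq0$, since then $t_0A$ sits inside a relatively compact set --- but applies it to the diagonal family $\{\psi(z)\delta_{\varphi(z)}:\ \|z\|\le r\}$, where it fails precisely when $\psi(z^n)\to0$ (e.g.\ $\psi(z^n)=1/n$ with $\delta_{\varphi(z^n)}$ separated). So you have correctly located the real obstacle, and your nondegenerate-case analysis matches the paper; but your proposal does not overcome that obstacle and is therefore not a complete proof.
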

   \begin{proof}
  	For every $ z\in B_X, $ consider  the function $ \delta_z $ given by $ \delta_z(f) = f(z)$ for every $ f \in \mathcal B_\mu(B_X). $ 
  By (\ref{eq_5.1}), it is clear that $ \delta_z \in (\mathcal B_\mu(B_X))'.$ Moreover, we have
  \begin{equation}\label{eq_6.18}
  	\dfrac{1}{2}\|z -w\| \le \|\delta_z - \delta_w\| \quad \forall z,w \in B_X.
  \end{equation} 
  Indeed, it is easy to check by direct calculation that
  \[ \dfrac{1}{2}\|z-w\| \le \sqrt{1-\dfrac{(1-\|z\|^2)(1-\|w\|^2)}{|1-\langle z,w\rangle|^2}} = \varrho_X(z,w) \]
  where $ \varrho_X $ is the pseudohyperbolic metric in $ B_X $ (see \cite[p.99]{GR}). On the other hand, we also have
  \[ \varrho_X(z,w) = \sup\{\varrho(f(z),f(w)):\ \text{$ f \in H^\infty(B_X) $ with $ \|f\|_\infty \le 1 $}\} \]
  (see (3.4) in \cite{BGM}), where $ \varrho(x,y) = \big|\frac{x-y}{1-\overline{x}y}\big|, $  $ x,y \in \BB_1, $ is the pseudohyperbolic metric in $ \BB_1. $ Note that, since the function $ \eta \mapsto \frac{\eta}{1-\overline{f(z)}f(w)} $ is holomorphic from $ \BB_1 $ into $ \BB_1 $ and $ f(z) - f(w) \mapsto 0,$ it follows from Schwarz's lemma that 
  $ \varrho(f(z), f(w)) \le |f(z) - f(w)| $ for every $ z, w \in B_X. $ Consequently,
  \[\begin{aligned}
  	\varrho_X(z,w) &\le \sup\{|f(z) - f(w)|:\ \ \text{for $ f \in H^\infty(B_X) $ with $ \|f\|_\infty \le 1 $}\}  \\
  	&\le \sup\{|\delta_z(f) - \delta_w(f)|:\ \ \text{for $ f \in H^\infty(B_X) $ with $ \|f\|_\infty \le 1 $}\} \\
  	&= \|\delta_z- \delta_w\|.
  \end{aligned}\]
  Hence, (\ref{eq_6.18}) is proved.

  For $ 0<r<1, $ the set $ V_r := \{\delta_z:\ \|z\| \le r\} \subset (\mathcal B^\mu_\nabla(B_X))'$ is bounded. Then, by the compactness of 
  $W_{\psi, \varphi} $ the set
  \[  (W_{\psi,\varphi})^*(V_z) = \{\psi(z)\delta_{\varphi(z)}:\ \|z\|\le r\} \] is relatively compact in $ (H^\infty(B_X))'. $

  It should be noted that, for every  subset  $ K $ of the dual of  a Banach space $ E  $ and every bounded subset $ D \subset \C, $ if the  set $ \{t\eta: \ t \in D, \eta \in A\} $ is relatively compact in $ E  $  then $ A \subset E' $ is relatively compact. With this fact in hand, since the  set $ \{\psi(z):\ \|z\|\le r\} $ is bounded,    the set $  \{\delta_z, \ \|z\|\le r\} $ is relatively compact. Then, it follows from the   inequality (\ref{eq_6.18}) 
  that $ \varphi(rB_X)$  is relatively compact.
   \end{proof}

       We introduce  below is an example which shows that (\ref{compact_3}) does not imply  (\ref{phi_relative_compact}).
 
	\begin{exam} Let $ \{e_j\}_{j\ge 1} $ be an orthonormal sequence in a Hilbert space $ X. $   Consider the function $ \varphi \in S(B_X) $ given by
		\[ \varphi(z) := \sum_{n=1 }^\infty\langle z,e_n\rangle^ne_n  \quad \forall z  \in B_X. \]
		It is easy to check that $ \varphi(rB_X) $ is relatively compact for every $ 0<r<1. $ 
		
		Now we show that  $ B[\varphi,\frac{1}{2}]  $ is not relatively compact.
			Consider the sequence $ \{z_k\}_{k\ge1} \subset B_X$ given by
		\[ z_k =  \frac1{\sqrt[k]{4}}e_k\quad \forall k \ge1. \]
		It is obvious $ \|\varphi(z_k)\|<\frac12 $ for every $ k\ge1. $ Then for every $ k\ge 1 $ and $ s>1 $ we have
		\[ \|\varphi(z_k) - \varphi(z_{k+s})\| =  \frac{\sqrt{2}}{4}.\]
Thus, we get the desired claim.
	\end{exam}

\begin{rmk} Under the additional condition that $ \varphi(0) = 0,$ the limits  $ (\ref{B_1a}) \to 0, $ $ (\ref{B_1b})\to 0 $ in Theorem \ref{thm_compact}  hold as  $ \|z_{[m]}\| \to 1.$ 
  Indeed, in a more general framework, it suffices to  show $ \|\varphi(z)\|\le \|z\| $ for every $ z\in B_X. $ That means we  have to give  an infinite version of Schwarz's lemma. 
  
  For each $ z \in X, z \neq 0 $ and $ w \in \overline{B_X}, $ applying classical Schwarz's lemma to the functions $ \phi_{z,w}: \BB_1 \to \BB_1$ given by
  \[ \phi_{z,w}(t) := \langle\varphi(tz/\|z\|), w\rangle \quad\forall t \in \BB_1, \]
  we have 
  \[ |\phi_{z,w}(t)| \le |t|. \]
  Then, choosing $ t = \|z\| $ and $ w = \frac{\overline{\varphi(z)}}{\|\varphi(z)\|} $ we get the desired inequality.
  \end{rmk}

%
%
%

\begin{cor}\label{cor_6.7}
	Assume that $\sup_{z\in X}\|\varphi(z)\| <\infty.$  Then  $ W_{\psi,\varphi}, $ $W^0_{\psi,\varphi} $ are compact if and only if $ \varphi(B_X) $ is relatively compact.
\end{cor}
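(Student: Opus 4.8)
The plan is to split the equivalence into its two implications and, in each, to reduce everything to the sequential criterion of Proposition \ref{prop_W_compact} together with Theorem \ref{thm_6.4}. The decisive role of the hypothesis is that it forces $s:=\sup_{z\in B_X}\|\varphi(z)\|<1$, so that once $\varphi(B_X)$ is known to be relatively compact, its closure $K:=\overline{\varphi(B_X)}$ is a compact subset of the \emph{open} ball $B_X$ (it stays off the unit sphere). This is exactly what allows me to transfer ``convergence uniform on compacta'' to the values of $\varphi$; if instead $K$ touched the sphere the transfer would fail. Throughout I take $W_{\psi,\varphi}$ to be bounded, which is automatic for the forward implication and, for the converse, follows from Theorem \ref{thm6_1} since $\|\varphi\|\le s<1$ keeps $\nu$ bounded below along $\varphi$.

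For sufficiency ($\varphi(B_X)$ relatively compact $\Rightarrow$ compactness) I argue directly through Proposition \ref{prop_W_compact}. Let $(f_n)\subset\mathcal B_\nu(B_X)$ be bounded with $f_n\to0$ uniformly on compacta. Since $\tau_{co}$-convergence of holomorphic maps passes to the derivatives, and $K\subset sB_X$ is compact, I obtain $\sup_{w\in K}|f_n(w)|\to0$ and $\sup_{w\in K}\|\nabla f_n(w)\|\to0$. Plugging this into the estimate \eqref{R_1} of Lemma \ref{nabla} gives
\[ \mu(z)\|R(W_{\psi,\varphi}f_n)(z)\|\le\big(\sup_K|f_n|\big)\,\mu(z)\|R\psi(z)\|+\big(\sup_K\|\nabla f_n\|\big)\,\mu(z)|\psi(z)|\|R\varphi(z)\|. \]
Using $\psi\in\mathcal B_\mu(B_X)$ and $\sup_z\mu(z)|\psi(z)|\|R\varphi(z)\|\le(\sup\nu)\,M_{\psi,R\varphi}<\infty$ (which is \eqref{B_4b} multiplied by the bounded factor $\nu(\varphi(z))$), both terms tend to $0$, so $\|W_{\psi,\varphi}f_n\|_{\mathcal B_\mu}\to0$ and $W_{\psi,\varphi}$ is compact; restricting to $\mathcal B_{\nu,0}(B_X)$ yields compactness of $W^0_{\psi,\varphi}$.

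For necessity (compactness $\Rightarrow$ $\varphi(B_X)$ relatively compact) Theorem \ref{thm_6.4} already supplies that $\varphi(rB_X)$ is relatively compact for every $r<1$, so only escape to the boundary must be excluded. Assuming $\varphi(B_X)$ is not relatively compact, it is not totally bounded in $\overline{sB_X}$, so there exist $\varepsilon_0>0$ and $z_n$ with $w_n:=\varphi(z_n)$ and $\|w_n-w_m\|\ge\varepsilon_0$; relative compactness of each $\varphi(rB_X)$ then forces $\|z_n\|\to1$. Passing to a subsequence with $w_n\rightharpoonup w_0$ and setting $u_n:=(w_n-w_0)/\|w_n-w_0\|\rightharpoonup0$, the linear functionals $f_n(w):=\langle w,u_n\rangle$ are bounded in $\mathcal B_\nu(B_X)$ and tend to $0$ uniformly on compacta, yet $\langle w_n,u_n\rangle$ stays bounded below. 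Since the points $w_n$ are interior ($\|w_n\|\le s<1$), \eqref{eq_6.18} gives $\|\delta_{w_n}-\delta_{w_m}\|\ge\tfrac12\varepsilon_0$, and combining this separation with the test functions $\beta_{w_n},\gamma_{w_n}$ of \eqref{test_func2}--\eqref{test_func3} (Propositions \ref{lem2} and \ref{lem3}) one aims to force $\|W_{\psi,\varphi}f_n\|_{\mathcal B_\mu}\not\to0$, contradicting Proposition \ref{prop_W_compact}.

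The hard part is precisely this last step. Because the witnessing $z_n$ run to the sphere, the weight $\mu(z_n)$ degenerates and one cannot simply evaluate the Bloch seminorm of $W_{\psi,\varphi}f_n$ at $z_n$. The delicate point will be to locate, along a suitable slice through each $z_n$, a radius $t_n<1$ at which the radial derivative of $W_{\psi,\varphi}f_n$ grows fast enough to compensate the decay of $\mu$ --- the very balance that makes the supremum defining $\|\cdot\|_{s\mathcal B_\mu}$ nondegenerate --- while the bound $\|w_n\|\le s<1$ keeps $\nu(\varphi(z_n))$ and $\mu(\varphi(z_n))$ bounded below so that the $\varepsilon_0$-separation of the $w_n$ is not washed out in passing to the operator norm. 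Once this boundary estimate is established the contradiction is complete, so $\varphi(B_X)$ is relatively compact; the statement for $W^0_{\psi,\varphi}$ is obtained verbatim.
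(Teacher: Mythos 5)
Your proposal has genuine gaps in both directions, and in each case the gap sits exactly at the point where infinite-dimensionality matters. In the sufficiency direction, the load-bearing step is your claim that $f_n\to 0$ uniformly on compacta forces $\sup_{w\in K}\|\nabla f_n(w)\|\to 0$ for the compact set $K=\overline{\varphi(B_X)}$. This is false in an infinite-dimensional Hilbert space: take $f_n(z)=\langle z,e_n\rangle$. These are uniformly bounded in $\mathcal B_\nu(B_X)$ and converge to $0$ uniformly on every compact subset of $B_X$ (compact sets have uniformly small tail coordinates), yet $\|\nabla f_n(w)\|\equiv 1$ for every $w$ — even for $K=\{0\}$. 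The finite-dimensional argument ``$\tau_{co}$-convergence passes to derivatives'' breaks down because a compact set has no compact neighbourhood: to estimate $\nabla f_n(w)$ in all directions $v$ by the Cauchy formula one needs $f_n$ small on $K+\rho B_X$, which is not compact. So the second term of your displayed estimate, $\bigl(\sup_K\|\nabla f_n\|\bigr)\mu(z)|\psi(z)|\|R\varphi(z)\|$, is not controlled; the danger is not, as you suggest, that $K$ might touch the sphere, but the gradient transfer itself. Any repair must use compactness of the set of (renormalized) derivative vectors, e.g.\ that $(1-\|z\|)R\varphi(z)$ lies in the closed convex hull of a compact set by the vector-valued Cauchy formula, and even then one needs growth control of the type $\sup_z\mu(z)|\psi(z)|/(1-\|z\|)<\infty$, which your hypotheses do not supply. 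Relatedly, your opening claim that boundedness of $W_{\psi,\varphi}$ ``follows from Theorem \ref{thm6_1}'' is unjustified: that theorem requires $M_{R\psi,\varphi}<\infty$ and $M_{\psi,R\varphi}<\infty$, i.e.\ $\psi\in\mathcal B_\mu(B_X)$ and $\sup_z\mu(z)|\psi(z)|\|R\varphi(z)\|<\infty$, and neither follows merely from $\nu$ being bounded below along $\varphi$.

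In the necessity direction you do not have a proof at all: you assemble the separated values $w_n=\varphi(z_n)$, the weakly null directions $u_n$, the test functions $\langle\cdot,u_n\rangle$, $\beta_{w_n}$, $\gamma_{w_n}$, and then explicitly defer the decisive step (``the hard part is precisely this last step''), namely a lower bound on $\|W_{\psi,\varphi}f_n\|_{\mathcal B_\mu(B_X)}$ that survives the degeneration of $\mu(z_n)$ as $\|z_n\|\to 1$. Announcing where the contradiction should come from is not producing it. For comparison, the paper does not argue either direction from scratch: it observes that under the standing hypothesis the limit condition (6) of Theorem \ref{thm_compact} holds vacuously and $B[\varphi^{[m]},r]\subset\varphi(B_X)$ gives (\ref{phi_relative_compact}), so sufficiency is quoted from Theorem \ref{thm_compact}(B), while necessity is quoted from Theorem \ref{thm_6.4}, which yields (\ref{compact_3}), i.e.\ relative compactness of $\varphi(rB_X)$ for each $r<1$. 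The passage from (\ref{compact_3}) to relative compactness of the full image $\varphi(B_X)$ — the case where the witnessing points escape to the sphere — is exactly the estimate your outline leaves open; so the one step you correctly isolated as hard is the one you never carry out.
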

Indeed, by the hypotheses,   (\ref{phi_relative_compact}),   (\ref{compact_3})  and the assumption (6) in Theorem \ref{thm_compact} always hold. 

\begin{thm}\label{thm_compact2} Let $\psi \in H(B_X),$  $\varphi \in S(B_X)$  and $ \mu, \nu $ be   normal weights on $ B_X $ such that $ \int_0^1\frac{dt}{\nu(t)} < \infty. $ Then
		\begin{enumerate}
		\item [\rm (A)] The  following are equivalent:
		\begin{enumerate}
			\item [\rm (1)]	 $ \psi \in \mathcal B_\mu(B_X) $ and   $ (\ref{B_3b})\to 0 $ as $ \|\varphi_{(k)}(y)\|\to1 $   for every $ k\ge1; $
			\item[\rm (2)] $W_{\psi, \varphi}: \mathcal B_{\nu}(B_X) \to \mathcal B_{\mu}(B_X)$     is compact; 
			\item [\rm (3)]$W^0_{\psi, \varphi}: \mathcal B_{\nu, 0}(B_X) \to \mathcal B_{\mu}(B_X)$     is compact.
		\end{enumerate}
		\item[\rm (B)] Under the additional assumption   that  there exists $ m\ge 2 $ such that     
		(\ref{phi_relative_compact}) holds, the assertions (2), (3) and  following are equivalent:
		\begin{enumerate}
			\item[\rm(4)]  $ \psi^{[m]} \in \mathcal B_{\mu^{[m]}}(\BB_m) $ and  $ (\ref{B_1b})\to 0 $ as $ \|\varphi^{[m]}(y)\|\to1; $ 
			\item[\rm(5)]  $ \psi^{[F]} \in \mathcal B_{\mu^{[F]}}(\BB_{[F]}) $ and $ (\ref{B_2b})\to 0 $ as $ \|\varphi^{[F]}(y)\|\to1 $  for every $ F \subset \Gamma $ finite;
			\item[\rm(6)]  $ \psi \in \mathcal B_\mu(B_X) $ and  $ (\ref{B_4b})\to 0 $ as $ \|\varphi(y)\|\to1. $ 
				\end{enumerate}
	\end{enumerate}
\end{thm}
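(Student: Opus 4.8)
The plan is to follow the architecture of Theorem \ref{thm_compact}, the sole novelty coming from the hypothesis $\int_0^1 dt/\nu(t)<\infty$. By (\ref{eq_5.1}) this forces $|f(z)|\le|f(0)|+\|f\|_{s\mathcal B_\nu(B_X)}\int_0^1 dt/\nu(t)$ for every $f\in\mathcal B_\nu(B_X)$, so that $\mathcal B_\nu(B_X)\subset H^\infty(B_X)$ and, more to the point, functions in $\mathcal B_\nu(B_X)$ have uniformly small boundary oscillation. This is precisely what allows the decay requirement on the quantities (\ref{B_1a})--(\ref{B_4a}) present in Theorem \ref{thm_compact} to be weakened here to the plain boundedness $\psi\in\mathcal B_\mu(B_X)$: in the radial-derivative splitting (\ref{R_1}) the ``value'' half will be controlled not by decay of $\mu\|R\psi\|$ but by smallness of $|f_n\circ\varphi|$. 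I would first clear the trivial implications (6)$\Rightarrow$(5)$\Rightarrow$(4) and (2)$\Rightarrow$(3) of (B), reducing (B) to proving (4)$\Rightarrow$(2) and (3)$\Rightarrow$(6).

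For the sufficiency direction (4)$\Rightarrow$(2) I would invoke Proposition \ref{prop_W_compact}: take $\{f_n\}$ bounded in $\mathcal B_\nu(B_X)$ with $f_n\to0$ uniformly on compacta, and estimate $\mu^{[m]}(z_x)\|R(W_{\psi,\varphi}f_n)(z_x)\|$ through (\ref{R_1}), with $z_x:=\sum_{k=1}^m z_kx_k$. The derivative term is handled exactly as in Theorem \ref{thm_compact}: write it as $(\ref{B_1b})\cdot\nu(\varphi^{[m]}(z_x))\|\nabla f_n(\varphi^{[m]}(z_x))\|$ and use $(\ref{B_1b})\to0$ near the boundary together with the Cauchy-estimate $\sup_{\overline{B[\varphi^{[m]},r_0]}}\|\nabla f_n\|\to0$, which needs the relative compactness (\ref{phi_relative_compact}). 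The genuinely new step is the value term $\mu^{[m]}\|R\psi^{[m]}\|\,|f_n(\varphi^{[m]}(z_x))|$: since $\psi\in\mathcal B_\mu(B_X)$ gives only that $\mu\|R\psi\|$ is bounded, I must show $|f_n(\varphi(z))|$ is uniformly small. For $\|\varphi(z)\|>r_0$ I would write $f_n(\varphi(z))=f_n(\widehat w)+\bigl(f_n(\varphi(z))-f_n(\widehat w)\bigr)$ with $\widehat w=r_0\varphi(z)/\|\varphi(z)\|$, bound the increment by $\|f_n\|_{s\mathcal B_\nu(B_X)}\int_{r_0}^{1}dt/\nu(t)$ (small because the integral converges), and send $f_n(\widehat w)\to0$ uniformly on the compact set $\overline{B[\varphi^{[m]},r_0]}$; for $\|\varphi(z)\|\le r_0$ uniform convergence on compacta suffices directly. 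Passing from the $\BB_m$-norms back to $\|\cdot\|_{\mathcal B_\mu(B_X)}$ via Proposition \ref{prop_OSm} then gives compactness. Part (A) is obtained by the same argument with the truncations $\varphi_{(k)}$ in place of $\varphi^{[m]}$, using the automatic relative compactness (\ref{phi_relative_compact_1}) instead of (\ref{phi_relative_compact}).

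For the necessity direction (3)$\Rightarrow$(6) I would argue by contraposition as in Theorem \ref{thm6_1}(6)$\Rightarrow$(4) and Theorem \ref{thm_compact}(3)$\Rightarrow$(6), but now feeding in the test sequence $\{\theta_{w^n}\}$ of (\ref{test_func4}) rather than $\{\gamma_{w^n}\}$, since it is exactly $\theta_{w^n}$ that is bounded in $\mathcal B_\nu(B_X)$ and tends to $0$ uniformly on compacta when $\int_0^1 dt/\nu<\infty$ (Proposition \ref{lem3}(1),(3)). Compactness forces boundedness, whence $\psi=W^0_{\psi,\varphi}(1)\in\mathcal B_\mu(B_X)$. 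Assuming $(\ref{B_4b})\not\to0$, I pick $z^n$ with $\|w^n\|:=\|\varphi(z^n)\|\to1$ and $(\ref{B_4b})(z^n)\ge\varepsilon_0$, and bound $\|W^0_{\psi,\varphi}(\theta_{w^n})\|_{\mathcal B_\mu(B_X)}$ from below at $z^n$ by the reverse triangle inequality. A direct computation gives $\|\nabla\theta_{w^n}(w^n)\|=g(\|w^n\|^2)-g(\|w^n\|^{\,n+2})$, so that, by (\ref{test_g2}) and Lemma \ref{lem_weight} applied to $\nu$, the decisive quantity satisfies $\nu(\|w^n\|)\|\nabla\theta_{w^n}(w^n)\|\ge C_1C_\mu-\nu(\|w^n\|)g(\|w^n\|^{\,n+2})$. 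Matched against $\|W^0_{\psi,\varphi}(\theta_{w^n})\|_{\mathcal B_\mu(B_X)}\to0$ (Proposition \ref{prop_W_compact}) and the boundedness of the value term, this would yield the contradiction $0<\varepsilon_0 C_1C_\mu\le0$. The combination of this necessity step with (4)$\Rightarrow$(2) then closes the equivalences, and the $\varphi_{(k)}$-version completes (A).

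The hard part will be controlling the subtracted term $\nu(\|w^n\|)g(\|w^n\|^{\,n+2})$ in that final lower bound. It tends to $0$ (so that the estimate closes with constant $C_1C_\mu/2$) exactly when $\limsup_n\|w^n\|^{\,n+2}<1$, i.e. when $\|w^n\|\to1$ is not too slow relative to the index $n$; if instead $\|w^n\|^{\,n}\to1$ this term stays comparable to the main term. The remedy I anticipate is to thin $\{z^n\}$ to a subsequence along which $n\bigl(1-\|\varphi(z^n)\|\bigr)\to\infty$ (adjusting the power in $\eta_{w^n}$ accordingly), which is compatible with both $\|\varphi(z^n)\|\to1$ and $(\ref{B_4b})(z^n)\ge\varepsilon_0$. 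A secondary, purely organizational, difficulty is keeping every boundary estimate uniform in $x\in OS_m$ so that Proposition \ref{prop_OSm} can legitimately be applied.
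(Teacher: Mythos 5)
Your sufficiency half --- (4)$\Rightarrow$(2) in (B) and its $\varphi_{(k)}$-analogue for (A) --- is correct and is essentially the paper's own argument: since $\int_0^1 dt/\nu(t)<\infty$, the tail $\int_{r_0}^1 dt/\nu(t)$ is small for $r_0$ near $1$, so in the splitting (\ref{R_1}) the value term only needs $\mu\|R\psi\|$ \emph{bounded} (i.e.\ $\psi\in\mathcal B_\mu(B_X)$) instead of the decay of (\ref{B_1a}) demanded in Theorem \ref{thm_compact}, while the derivative term and the use of (\ref{phi_relative_compact}), resp.\ (\ref{phi_relative_compact_1}), are unchanged.

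The necessity direction (3)$\Rightarrow$(6), however, has a genuine gap, and your diagnosis of the ``hard part'' is exactly inverted. First, the regime you propose to avoid is the harmless one: along the paper's subsequence $\|w^n\|>1-1/n^2$ (so $\|w^n\|^{n+2}\to1$), the bound (\ref{test_g2}) together with the normality condition (\ref{w_1}) (monotonicity of $\nu(t)/(1-t)^a$ near $1$) gives
\[
\nu(\|w^n\|)\,g(\|w^n\|^{n+2})\ \le\ C_2\,\frac{\nu(\|w^n\|)}{\nu(\|w^n\|^{n+2})}\ \le\ \frac{C_2}{\big(1+\|w^n\|+\cdots+\|w^n\|^{n+1}\big)^{a}}\ \lesssim\ \frac{C_2}{(n+2)^{a}}\ \to\ 0,
\]
because the geometric sum has $n+2$ terms each tending to $1$; so the subtracted term does \emph{not} stay comparable to the main term there --- this ratio estimate is the heart of the paper's proof. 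Second, and fatally, your remedy (thinning or raising the power so that $n(1-\|w^n\|)\to\infty$, hence $\|w^n\|^{n}\to0$) destroys the value-term estimate: then $\eta_{w^n}(w^n)\to1$ while $\beta_{w^n}(w^n)\to\int_0^1 g(t)\,dt$, and $\int_0^1 g(t)\,dt>\int_0^{r_1}g(t)\,dt=1$, so $\theta_{w^n}(w^n)\to 1-\int_0^1 g(t)\,dt\neq0$. Since the points $w^n$ escape every compact set, uniform convergence on compacta is of no help at $w^n$, and the mere ``boundedness of the value term'' you invoke is insufficient: the reverse triangle inequality then only yields
\[
\varepsilon_0C_1C_\mu\ \le\ \liminf_{n\to\infty}\Big[\|W^0_{\psi,\varphi}\theta_{w^n}\|_{\mathcal B_\mu(B_X)}+\|\psi\|_{s\mathcal B_\mu(B_X)}\,|\theta_{w^n}(w^n)|\Big]\ \le\ \|\psi\|_{s\mathcal B_\mu(B_X)}\Big(\int_0^1 g(t)\,dt-1\Big),
\]
which is not a contradiction. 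The value term must \emph{vanish}, and it vanishes precisely when $\|w^n\|^{n+2}\to1$, for then both $\eta_{w^n}(w^n)$ and $\beta_{w^n}(w^n)$ tend to the common finite limit $\int_0^1g(t)\,dt$ (finiteness being exactly where the hypothesis $\int_0^1 dt/\nu(t)<\infty$ enters). This is why the paper passes to a subsequence with $\|w^n\|>1-1/n^2$: that single choice makes $\theta_{w^n}(\varphi(z^n))\to0$ and, by the display above, kills $\nu(\|w^n\|)g(\|w^n\|^{n+2})$ simultaneously. As written, your necessity argument cannot be completed without reversing your proposed thinning.
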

\begin{proof}
	As Theorem \ref{thm_compact}, it suffices to prove (4) $ \Rightarrow $ (2) and (3) $\Rightarrow$ (6).
	
	(4) $ \Rightarrow $ (2): By Theorem \ref{thm_compact} it suffices to show that $ (\ref{B_1a}) \to0 $ as $ \|\varphi^{[m]}(y)\|\to 1, $ $ y\in \BB_m. $ It is easy to prove this fact from the assumptions that  $ \psi^{[m]} \in \mathcal B_{\mu^{[m]}}(\BB_m) $ and $ \int_0^1\frac{dt}{\nu(t)}<\infty. $
	
%
%
%
		
	(3) $ \Rightarrow $ (6): Since $ W^0_{\psi\varphi} $ is bounded, as in Theorem \ref{thm6_1}, we have $ \psi \in \mathcal B_\mu(B_X). $  
	
	Now, we show that $ (\ref{B_4b})\to 0 $ as $ \|\varphi(y)\|\to1 $ for $ y \in B_X.$
	Otherwise, we would have a sequence  $ \{z^n\}_{n\ge1} \subset B_X $ and
	some positive constant $ \varepsilon_0 >0 $ such that such that $ \|w^n\| := \|\varphi(z^n)\| \to 1 $ but
	\[ \frac{\mu(z^n)|\psi(z^n)| \|R\varphi(z^n)\|}{\nu(\varphi(z^n))} \ge \varepsilon_0.  \]
	
	We may assume that
	\begin{equation}\label{eq_wtend1}
		1-\dfrac{1}{n^2} < \|w^n\| < 1 \quad\text{and}\quad \lim_{n\to\infty}\|w^n\| = \|w^n\| = 1.
	\end{equation} 
	Consider the sequence $ \{\theta_{w^n}\}_{n\ge1} $ which is bounded in $ \mathcal B_\nu(B_X) $ and converges to $ 0 $ uniformly on compact subsets of $ B_X$ (see Proposition \ref{lem3}). Then
	\[ \begin{aligned}
		\|W^0_{\psi,\varphi}&\theta_{w^n}\|_{\mathcal B_\mu(B_X)} +\sup_{z\in B_X} \mu(z)\|R\psi(z)\||\theta_{w^n}(\varphi(z))| \\
		&\ge \sup_{z\in \BB_n}\mu(z)|\psi(z)| \|R\eta_{w^n}(\varphi(z))\| \|R\varphi(z)\|\\
		&\ge \mu(z^n)|\psi(z^n)| \|R\varphi(z^n)\||g(\|w^n\|^{n+2})-g(\|w^n\|^2)|\\
		&\ge \frac{\mu(z^n)|\psi(z^n)| \|R\varphi(z^n)\|}{\nu(\varphi(z^n)}\nu(\varphi(z^n)|g(\|w^n\|^{n+2})-g(\|w^n\|^2)| \\
		&\ge \varepsilon_0 \nu(\varphi(z^n) \big|g(\|w^n\|^{n+2})-g(\|w^n\|^2)\big| \\
		&\ge  \varepsilon_0\bigg[\nu(\|w^n\|^2)g(\|w^n\|^2)\frac{\nu(\|w^n\|)}{\nu(\|w^n\|^2)} - \nu(\|w^n\|^{n+2})g(\|w^n\|^{n+2})\frac{\nu(\|w^n\|)}{\nu(\|w^n\|^{n+2})}\bigg].
	\end{aligned} \]
	On the other hand, it follows from (\ref{eq_wtend1}) that $ \|w^n\|^{n+2} \to 1 $ as $ n\to \infty. $ Then
	\[ \begin{aligned}
		0 &\le \limsup_{n\to\infty}\frac{\nu(w^n)}{\nu(\|w^n\|^{n+2}} \\
		&= \limsup_{n\to\infty}\frac{\nu(w^n)/(1-\|w^n\|^2)^a(1+\|w^n\| + \|w^n\|^2+\cdots+\|w^n\|^{n+1})^a}{\nu(\|w^n\|^{n+2})/(1-\|w^n\|^{n+2})^a} \\
		&\le \limsup_{n\to\infty}\frac{1}{(1+\|w^n\| + |w^n\|^2+\cdots+\|w^n\|^{n+1})^a} \\
		&\le \limsup_{n\to\infty}\frac{1}{(2+n)^a} =0.
	\end{aligned} \]
	Therefore, it follows from (\ref{eq_frac_nu}) that
	\[ \begin{aligned}
		&\liminf_{n\to\infty}\Big[\|W^0_{\psi,\varphi}\theta_{w^n}\|_{\mathcal B\mu(B_X)} + \mu(z)\|R\psi(z)\| |\theta_{w^n}(\varphi(z))|\Big] \\
		&\ge \varepsilon_0\bigg[\liminf_{n\to\infty}\nu(\|w^n\|^2)g(\|w^n\|^2)\frac{\nu(\|w^n\|)}{\nu(\|w^n\|^2)} -\limsup_{n\to\infty} \nu(\|w^n\|^{n+2})g(\|w^n\|^{n+2})\frac{\nu(\|w^n\|)}{\nu(\|w^n\|^{n+2})}\bigg] \\
		&\ge \varepsilon_0 \big[\inf_{t\in [0,1)}\nu(t)g(t) - 0\big] > 0.
	\end{aligned} \]
	This contradicts $ \|W^0_{\psi,\varphi}\theta_{w^n}\|_{\mathcal B\mu(B_X)} \to0 $ and $ \theta_{w^n}(\varphi^{[m]}(z^n)) \to 0 $ as $ n\to\infty. $
	
%
	This  concludes the proof.
\end{proof}
In the same way as in  the proof of Theorems \ref{thm_compact}, \ref{thm_compact2}, by using Theorem \ref{thm2_2}   we obtain the following results on the compactness of the operator $W^{0,0}_{\psi, \varphi}: \mathcal B_{\nu,0}(B_X) \to \mathcal B_{\mu,0}(B_X).$ 
\begin{thm}\label{thm2_2_cp} Let $\psi \in H(B_X),$  $\varphi   \in S(B_X).$    and  $ \mu, \nu $ be  normal weights on $ B_X $  such that    $ \int_0^1\frac{dt}{\nu(t)} = \infty. $ Then
	\begin{enumerate}
		\item [\rm(A)] The following are equivalent:
		\begin{enumerate}
			\item   [\rm (1)]	$\psi, \psi \cdot\varphi_j \in \mathcal B_{\mu,0}(B_X),$ $ (\ref{B_3a}) \to 0, $ $ (\ref{B_3b})\to 0 $ as $ \|\varphi_{(k)}(y)\|\to1 $    for every $ j\in \Gamma $ and for every $ k \ge 1;$
			\item[\rm(2)]  $W^{0,0}_{\psi, \varphi}: \mathcal B_{\nu,0}(B_X) \to \mathcal B_{\mu,0}(B_X)$     is compact,	
		\end{enumerate}
	\item[\rm (B)] Under the additional assumption   that  there exists $ m\ge 2 $ such that     
	(\ref{phi_relative_compact}) holds, the assertion (2)  and  following are equivalent:
	\begin{enumerate}
		\item[\rm(3)]   $ \psi^{[m]}, \psi^{[m]}\cdot\varphi^{[m]}_j \in \mathcal B_{\mu^{[m]}}(\BB_m),$	$ (\ref{B_1a}) \to 0, $ $ (\ref{B_1b})\to 0 $ as $ \|\varphi^{[m]}(y)\|\to1 $ for every $ j \in \Gamma; $
		\item[\rm(4)] $ \psi^{[F]}, \psi^{[F]}\cdot\varphi^{[F]}_j \in \mathcal B_{\mu^{[F]}}(\BB_{[F]}), $   $ (\ref{B_2a}) \to 0, $ $ (\ref{B_2b})\to 0 $ as $ \|\varphi^{[F]}(y)\|\to1 $    for every $ F \subset \Gamma $ finite and for every $ j \in \Gamma; $ 
		\item[\rm(5)] $ \psi, \psi\cdot\varphi_j \in \mathcal B_{\mu}(B_X), $ $ (\ref{B_4a}) \to 0, $ $ (\ref{B_4b})\to 0 $ as $ \|\varphi(y)\|\to1.$   
	\end{enumerate}
	\end{enumerate}
 \end{thm}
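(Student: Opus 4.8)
The plan is to reduce the whole statement to the two characterizations already in hand: the compactness criterion for the operator into the \emph{big} space $\mathcal{B}_\mu(B_X)$ (Theorem \ref{thm_compact}) and the boundedness criterion for the operator into the \emph{little} space $\mathcal{B}_{\mu,0}(B_X)$ (Theorem \ref{thm2_2}). The decisive observation is that, since $\mathcal{B}_{\mu,0}(B_X)$ is a closed subspace of $\mathcal{B}_\mu(B_X)$ carrying the induced norm, the operator $W^{0,0}_{\psi,\varphi}:\mathcal{B}_{\nu,0}(B_X)\to\mathcal{B}_{\mu,0}(B_X)$ is compact if and only if $W^{0}_{\psi,\varphi}:\mathcal{B}_{\nu,0}(B_X)\to\mathcal{B}_\mu(B_X)$ is compact \emph{and} its range is contained in $\mathcal{B}_{\mu,0}(B_X)$. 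Thus each listed condition should split into a ``compactness part'' and a ``little-space membership part'', both of which have already been described in the preceding theorems.

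First I would isolate the range condition. Applying $W^{0,0}_{\psi,\varphi}$ to the constant $1$ and to the coordinate functions $f_j(z)=z_j$, all of which lie in $\mathcal{B}_{\nu,0}(B_X)$, shows that $W^{0,0}_{\psi,\varphi}$ maps $\mathcal{B}_{\nu,0}(B_X)$ into $\mathcal{B}_{\mu,0}(B_X)$ only if $\psi$ and the products $\psi\cdot\varphi_j$ belong to $\mathcal{B}_{\mu,0}(B_X)$; the converse sufficiency is exactly the implication $(1)\Rightarrow(5)$ of Theorem \ref{thm2_2}, whose estimate transfers verbatim. This accounts for all the membership hypotheses $\psi,\psi\cdot\varphi_j\in\mathcal{B}_{\mu,0}$ (respectively $\psi^{[m]},\psi^{[m]}\cdot\varphi^{[m]}_j\in\mathcal{B}_{\mu^{[m]}}$ and the $F$-versions) that appear throughout the lists in parts (A) and (B). Note that the finiteness requirements $M_{R\psi,\varphi}<\infty$, $M_{\psi,R\varphi}<\infty$ needed to invoke Theorem \ref{thm2_2} are subsumed: they follow from the boundary-vanishing of (\ref{B_3b}), (\ref{B_4b}) together with the relative-compactness structure (for part (B), assumption (\ref{phi_relative_compact}); for part (A), the finite-dimensional range of $\varphi_{(k)}$).

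Next I would treat the compactness part. Under the standing hypothesis $\int_0^1\frac{dt}{\nu(t)}=\infty$, compactness of $W^{0}_{\psi,\varphi}:\mathcal{B}_{\nu,0}(B_X)\to\mathcal{B}_\mu(B_X)$ is, by Theorem \ref{thm_compact}, equivalent to the boundary conditions $(\ref{B_3a})\to 0$, $(\ref{B_3b})\to 0$ as $\|\varphi_{(k)}(y)\|\to 1$ in part (A), and, under (\ref{phi_relative_compact}), to the finite-dimensional forms $(\ref{B_1a})\to 0$, $(\ref{B_1b})\to 0$ as $\|\varphi^{[m]}(y)\|\to 1$ in part (B). The two genuine implications reproduce those of Theorem \ref{thm_compact}: for ``conditions $\Rightarrow$ compact'' one runs the estimate of $(4)\Rightarrow(2)$ there, using Proposition \ref{prop_W_compact}, the splitting $\|w^{[m]}\|>r_0$ versus $\|w^{[m]}\|\le r_0$, and the relative compactness (\ref{phi_relative_compact}); for the reverse one feeds the test functions $\{\gamma_{w^n}\}$ of Proposition \ref{lem3}(2)---bounded in $\mathcal{B}_\nu(B_X)$ and converging to $0$ uniformly on compacta precisely because $\int_0^1\frac{dt}{\nu(t)}=\infty$---into the compact operator and copies the argument of $(3)\Rightarrow(6)$ in Theorem \ref{thm_compact}. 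The trivial implications $(5)\Rightarrow(4)\Rightarrow(3)$ in part (B) are the usual restrictions to smaller coordinate subspaces, and the equivalences of part (A) follow by the same reasoning with the single-subspace conditions replaced by the conditions over all of $B_X$.

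Finally I would assemble the two halves, reading off each equivalence of the statement as the conjunction of a compactness clause (inherited from Theorem \ref{thm_compact}) and a membership clause (inherited from Theorem \ref{thm2_2}), with the finiteness of the $M$-quantities traced back through Theorem \ref{thm6_1}. I expect the only real difficulty to be \emph{bookkeeping rather than new analysis}: one must check that the very same test sequence $\{\gamma_{w^n}\}$ that detects failure of compactness also, through the images $W^{0,0}_{\psi,\varphi}(1)$ and $W^{0,0}_{\psi,\varphi}(f_j)$, detects failure of little-space membership, so that the lists in (1), (3)--(5) contain exactly the conditions forced by compactness into $\mathcal{B}_{\mu,0}(B_X)$ and nothing redundant or missing. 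Coordinating these two characterizations---ensuring the boundary-vanishing and the membership requirements match up across all four target-index regimes---is the point requiring care, while the analytic estimates themselves are taken over unchanged from Theorems \ref{thm_compact}, \ref{thm2_2}, and \ref{thm6_1}.
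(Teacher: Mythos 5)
Your overall strategy coincides with the paper's: the paper proves this theorem in a single sentence, by appealing to the arguments of Theorems \ref{thm_compact} and \ref{thm_compact2} combined with Theorem \ref{thm2_2}, and your splitting of the problem --- $W^{0,0}_{\psi,\varphi}$ is compact if and only if $W^{0}_{\psi,\varphi}:\mathcal B_{\nu,0}(B_X)\to\mathcal B_{\mu}(B_X)$ is compact \emph{and} its range lies in $\mathcal B_{\mu,0}(B_X)$ (legitimate because $\mathcal B_{\mu,0}(B_X)$ is a closed subspace of $\mathcal B_{\mu}(B_X)$ with the induced norm) --- is exactly the right formalization of that one-liner. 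Part (A) goes through as you describe: the boundary-vanishing clauses are equivalent to compactness of $W^{0}_{\psi,\varphi}$ by Theorem \ref{thm_compact}(A), the memberships $\psi,\psi\cdot\varphi_j\in\mathcal B_{\mu,0}(B_X)$ are forced by $\psi=W^{0,0}_{\psi,\varphi}(1)$, $\psi\cdot\varphi_j=W^{0,0}_{\psi,\varphi}(f_j)$ and, conversely, feed Theorem \ref{thm2_2} to give the range condition.

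There is, however, a concrete mismatch in your treatment of part (B), and it sits precisely at the bookkeeping point you claimed to have verified. Your sufficiency direction obtains the range condition by invoking Theorem \ref{thm2_2}, whose hypotheses require the \emph{little}-space memberships $\psi^{[m]},\psi^{[m]}\cdot\varphi^{[m]}_j\in\mathcal B_{\mu^{[m]},0}(\BB_m)$ (and their $F$- and full-space analogues); but conditions (3)--(5) as printed supply only membership in the \emph{big} spaces $\mathcal B_{\mu^{[m]}}(\BB_m)$, $\mathcal B_{\mu^{[F]}}(\BB_{[F]})$, $\mathcal B_{\mu}(B_X)$. Under that literal reading the implication (5) $\Rightarrow$ (2) is false, not merely unproved: take $\varphi\equiv 0$ and $\psi\in\mathcal B_{\mu}(B_X)\setminus\mathcal B_{\mu,0}(B_X)$; then (\ref{phi_relative_compact}) holds trivially, the limits in (3)--(5) are vacuous since $\|\varphi(y)\|$ never approaches $1$, and $\psi\cdot\varphi_j\equiv 0$, yet $W^{0,0}_{\psi,\varphi}(1)=\psi\notin\mathcal B_{\mu,0}(B_X)$, so (2) fails. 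Consequently the memberships in (3)--(5) must be read as little-space ones --- which is consistent with part (A), with Theorem \ref{thm2_2}, and with what your own necessity argument actually produces --- and your proof should say so explicitly rather than assert that the printed lists contain ``exactly the conditions forced by compactness \dots nothing redundant or missing.'' A minor secondary point: the finiteness of $M_{R\psi,\varphi}$ and $M_{\psi,R\varphi}$ does not come from the relative-compactness assumption (\ref{phi_relative_compact}) or from the finite-dimensionality of the range of $\varphi_{(k)}$, as you suggest; it comes from the memberships $\psi,\psi\cdot\varphi_j\in\mathcal B_{\mu}(B_X)$, which bound (\ref{B_3a}), (\ref{B_3b}) (resp. (\ref{B_4a}), (\ref{B_4b})) on the region where $\|\varphi_{(k)}(y)\|$ stays away from $1$, or alternatively from the boundedness of $W^{0}_{\psi,\varphi}$ via Theorem \ref{thm6_1}.
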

\begin{thm}\label{thm2_2_cp1} Let $\psi \in H(B_X),$  $\varphi   \in S(B_X).$    and  $ \mu, \nu $ be   normal weights on $ B_X $  such that    $ \int_0^1\frac{dt}{\nu(t)} < \infty. $ Then
	\begin{enumerate}
		\item [\rm(A)] The following are equivalent:
		\begin{enumerate}
			\item   [\rm (1)]	 $\psi, \psi \cdot\varphi_j \in \mathcal B_{\mu,0}(B_X),$ $ (\ref{B_3b})\to 0 $ as $ \|\varphi_{(k)}(y)\|\to1 $   for every $ j\in \Gamma $ and for every $ k \ge 1;$
			\item[\rm(2)]  $W^{0,0}_{\psi, \varphi}: \mathcal B_{\nu,0}(B_X) \to \mathcal B_{\mu,0}(B_X)$     is compact, 	
		\end{enumerate}
		\item[\rm (B)] Under the additional assumption   that  there exists $ m\ge 2 $ such that     
		(\ref{phi_relative_compact}) holds, the assertion (2)  and  following are equivalent:
		\begin{enumerate}
			\item[\rm(3)]  	$ \psi^{[m]}, \psi^{[m]}\cdot\varphi^{[m]}_j \in \mathcal B_{\mu^{[m]}}(\BB_m), $ $ (\ref{B_1b})\to 0 $ as $ \|\varphi^{[m]}(y)\|\to1 $    for every $ j \in \Gamma; $
			\item[\rm(4)] $ \psi^{[F]}, \psi^{[F]}\cdot\varphi^{[F]}_j \in \mathcal B_{\mu^{[F]}}(\BB_{[F]}), $   $ (\ref{B_2b})\to 0 $ as $ \|\varphi^{[F]}(y)\|\to1 $     for every $ F \subset \Gamma $ finite and for every $ j \in \Gamma; $ 
			\item[\rm(5)] $ \psi, \psi\cdot\varphi_j \in \mathcal B_{\mu}(B_X), $   $ (\ref{B_4b})\to 0 $ as $ \|\varphi(y)\|\to1.$   
		\end{enumerate}
	\end{enumerate}
\end{thm}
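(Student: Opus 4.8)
The plan is to deduce this theorem entirely from machinery already in place, exactly as the text announces: I would combine the boundedness characterization of $W^{0,0}_{\psi,\varphi}\colon\mathcal B_{\nu,0}(B_X)\to\mathcal B_{\mu,0}(B_X)$ from Theorem~\ref{thm2_2} with the compactness characterization of the little-to-big operator $W^{0}_{\psi,\varphi}\colon\mathcal B_{\nu,0}(B_X)\to\mathcal B_{\mu}(B_X)$ from Theorem~\ref{thm_compact2}. The organizing observation is that $\mathcal B_{\mu,0}(B_X)$ is a \emph{closed} subspace of $\mathcal B_{\mu}(B_X)$: once the range of $W^{0,0}_{\psi,\varphi}$ is known to lie in $\mathcal B_{\mu,0}(B_X)$, the operator is compact as a map into $\mathcal B_{\mu,0}(B_X)$ if and only if $W^{0}_{\psi,\varphi}$ is compact as a map into $\mathcal B_{\mu}(B_X)$. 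Consequently I would split assertion (2) into the conjunction of boundedness of $W^{0,0}_{\psi,\varphi}$ (which by Theorem~\ref{thm2_2} furnishes $\psi,\psi\cdot\varphi_j\in\mathcal B_{\mu,0}(B_X)$ together with $M_{R\psi,\varphi},M_{\psi,R\varphi}<\infty$) and compactness of $W^{0}_{\psi,\varphi}$ into $\mathcal B_{\mu}(B_X)$, and then quote Theorem~\ref{thm_compact2} for the latter.

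For the organization of the implications I would keep the skeleton of Theorems~\ref{thm_compact} and \ref{thm_compact2}. In part (B) the downward passages $(5)\Rightarrow(4)\Rightarrow(3)$ between the full-ball, the $\BB_{[F]}$- and the $\BB_m$-versions of the limit conditions are routine restrictions, handled as in Propositions~\ref{prop_OSm} and \ref{prop_OSm_0} via the identity $\|\nabla f_x(z_{[m]})\|=\|\nabla f(\sum_k z_k x_k)\|$ and the norm equality (\ref{eq_Prop4.1}); the relative-compactness hypothesis (\ref{phi_relative_compact}) is precisely what allows the $m$-dimensional condition (3) to be promoted back up to (5). The hypothesis $\int_0^1 dt/\nu(t)<\infty$ enters here as the simplifying feature: since $\max\{1,\int_0^{\|\varphi(y)\|}dt/\nu(t)\}$ is then uniformly bounded, the quantity (\ref{B_4a}) is dominated by a constant multiple of $\mu(y)\|R\psi(y)\|$, so that no separate limit condition on (\ref{B_4a}) is required and the role of (\ref{B_4a}) is absorbed into the memberships $\psi,\psi\cdot\varphi_j\in\mathcal B_{\mu}(B_X)$. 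This is the same reduction used in the proof of $(4)\Rightarrow(2)$ of Theorem~\ref{thm_compact2}, and it is what distinguishes the list of conditions here from that of Theorem~\ref{thm2_2_cp}.

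The two substantive implications are the sufficiency and the necessity of the limit condition on (\ref{B_4b}). For sufficiency I would reproduce the estimate of $(4)\Rightarrow(2)$ in Theorem~\ref{thm_compact}: take a bounded sequence $\{f_n\}$ in $\mathcal B_{\nu,0}(B_X)$ converging to $0$ uniformly on compacta, split according to whether $\|\varphi^{[m]}(z_x)\|$ exceeds a threshold $r_0$, bound $\mu^{[m]}(z_x)\|R(W_{\psi,\varphi}f_n)(z_x)\|$ using (\ref{eq_5.1}) and Lemma~\ref{nabla}, and conclude $\|W_{\psi,\varphi}(f_n)\|_{\mathcal B_\mu(B_X)}\to0$ through Proposition~\ref{prop_W_compact} (and Lemma~\ref{lem_Tj}) together with (\ref{eq_Prop4.1}); membership of the range in $\mathcal B_{\mu,0}(B_X)$ is guaranteed by Theorem~\ref{thm2_2}. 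For necessity I would argue by contradiction as in $(3)\Rightarrow(6)$ of Theorem~\ref{thm_compact2}: if (\ref{B_4b}) does not vanish as $\|\varphi(y)\|\to1$, pick $\{z^n\}$ with $\|w^n\|:=\|\varphi(z^n)\|\to1$ violating the limit, and test compactness against the functions $\theta_{w^n}=\eta_{w^n}-\beta_{w^n}$ of (\ref{test_func4}); by Proposition~\ref{lem3}(3) these are bounded in $\mathcal B_{\nu}(B_X)$ and converge to $0$ uniformly on compacta precisely because $\int_0^1 dt/\nu(t)<\infty$, while (\ref{test_g2}) and Lemma~\ref{lem_weight} yield a positive lower bound for $\|W^{0}_{\psi,\varphi}\theta_{w^n}\|_{\mathcal B_\mu(B_X)}$, contradicting compactness.

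The main obstacle I anticipate is exactly this necessity step in the convergent-integral regime. The test functions $\gamma_{w^n}$ that serve in the divergent case fail to tend to $0$ on compacta when $\int_0^1 dt/\nu(t)<\infty$, so one is forced to work with the difference $\theta_{w^n}$ and to control the gap between its two integral terms $\int_0^{\|w^n\|^n\langle z,w^n\rangle}g$ and $\int_0^{\langle z,w^n\rangle}g$. Making the $R\psi$-contribution disappear while the $R\varphi$-contribution survives rests on the delicate normality estimate $\nu(\|w^n\|)/\nu(\|w^n\|^{n+2})\to0$, obtained from $(W_1)$ by writing the ratio through $(1-\|w^n\|^2)^a$ and the geometric factor $(1+\|w^n\|+\cdots+\|w^n\|^{n+1})^a\to\infty$; balancing these growth rates against (\ref{test_g2}) and (\ref{eq_frac_nu}) is the one genuinely technical point, after which the rest is a transcription of the proofs of Theorems~\ref{thm_compact} and \ref{thm_compact2} with $\mathcal B_{\mu,0}(B_X)$ replacing $\mathcal B_{\mu}(B_X)$ as the target space.
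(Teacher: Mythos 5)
Your overall strategy coincides with the paper's: the paper disposes of this theorem in a single sentence (``in the same way as in the proofs of Theorems \ref{thm_compact}, \ref{thm_compact2}, by using Theorem \ref{thm2_2}''), and your proposal is a faithful expansion of exactly that plan. Your organizing device --- that $\mathcal B_{\mu,0}(B_X)$ is a closed subspace of $\mathcal B_{\mu}(B_X)$, so that once the range of $W_{\psi,\varphi}$ on $\mathcal B_{\nu,0}(B_X)$ lies in $\mathcal B_{\mu,0}(B_X)$, compactness into the little space is equivalent to compactness of $W^{0}_{\psi,\varphi}$ into the big space --- is a cleaner implementation than literally redoing the estimates, and it correctly splits assertion (2) into the conjunction of boundedness of $W^{0,0}_{\psi,\varphi}$ (Theorem \ref{thm2_2}) and compactness of $W^{0}_{\psi,\varphi}$ (Theorem \ref{thm_compact2}). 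For part (A) this closes the argument, modulo the routine verification, which you only partly indicate, that condition (1) forces $M^{(k)}_{R\psi,\varphi}, M^{(k)}_{\psi,R\varphi} < \infty$: the first from $\psi \in \mathcal B_{\mu,0}(B_X)$ together with $\int_0^1 dt/\nu(t) < \infty$, the second by combining the limit condition on (\ref{B_3b}) (which controls the region $\|\varphi_{(k)}(y)\| > r_0$) with the identity $R(\psi\cdot\varphi_j) = \varphi_j R\psi + \psi R\varphi_j$ and the memberships $\psi, \psi\cdot\varphi_j \in \mathcal B_{\mu}(B_X)$ on the region $\|\varphi_{(k)}(y)\| \le r_0$.

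There is, however, one genuine obstruction in part (B) that your plan (and the paper's sketch) does not resolve. Your decomposition produces the \emph{little}-Bloch memberships $\psi, \psi\cdot\varphi_j \in \mathcal B_{\mu,0}(B_X)$ as both necessary and sufficient ingredients --- this is what Theorem \ref{thm2_2} delivers --- whereas conditions (3)--(5) as printed require membership only in the \emph{big} spaces $\mathcal B_{\mu^{[m]}}(\BB_m)$, $\mathcal B_{\mu^{[F]}}(\BB_{[F]})$, $\mathcal B_{\mu}(B_X)$. The implication (5) $\Rightarrow$ (2) cannot be recovered from this weaker hypothesis; indeed it is false as literally stated: take $\varphi \equiv 0$ and $\psi \in \mathcal B_{\mu}(B_X)\setminus\mathcal B_{\mu,0}(B_X)$. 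Then (\ref{phi_relative_compact}) holds trivially, the condition $(\ref{B_4b}) \to 0$ as $\|\varphi(y)\|\to 1$ is vacuous because $\|\varphi(y)\| \equiv 0$, and $\psi\cdot\varphi_j \equiv 0$, so (3), (4), (5) all hold; yet $W^{0,0}_{\psi,\varphi}(1) = \psi \notin \mathcal B_{\mu,0}(B_X)$, so $W^{0,0}_{\psi,\varphi}$ does not even map $\mathcal B_{\nu,0}(B_X)$ into $\mathcal B_{\mu,0}(B_X)$ and (2) fails. Consequently, for your argument (or any argument) to prove the theorem, the memberships in (3)--(5) must be read in the little spaces, exactly as they appear in Theorem \ref{thm2_2} and in condition (1) of part (A); with that reading your proof goes through, and the defect lies in the paper's statement rather than in your method.
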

%
To finish this paper, combining Theorem \ref{thm_WO} with the main results in Section 6, we state the characterizations for the boundedness and the compactness of the operators   $ \widetilde{W}_{\psi,\varphi},  $  $ \widetilde{W}^0_{\psi,\varphi},  $ $ \widetilde{W}^{0,0}_{\psi,\varphi}. $

\begin{thm}
	Let   $W \subset Y'$ be a  separating  subspace. Let $\psi \in H(B_X)$ and $\varphi   \in S(B_X).$   Let  $ \mu, \nu $ be   normal weights on $ B_X. $ Then 
	\begin{enumerate}[\rm (1)]
		\item The  following are equivalent:
		\begin{itemize}
			\item $\widetilde{W}_{\psi,\varphi} $ is bounded;
				\item $\widetilde{W}^0_{\psi,\varphi} $ is bounded;
				\item  One of the assertions (1)-(4) in Theorem \ref{thm6_1},
		\end{itemize}
		\item  $\widetilde{W}^{0,0}_{\psi,\varphi}, $ is bounded if and only if the assumptions (1)-(4) in Theorem \ref{thm2_2} holds.
		\end{enumerate}
	\end{thm}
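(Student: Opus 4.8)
The plan is to derive the entire statement as a direct corollary of the transfer principle in Theorem~\ref{thm_WO}(1), feeding it the scalar characterizations already established in Theorems~\ref{thm6_1} and \ref{thm2_2}. The first thing I would do is verify that all the hypotheses of Theorem~\ref{thm_WO} are in force. By Proposition~\ref{prop_e1e3}, each of the scalar Bloch-type spaces $\mathcal B_\nu(B_X)$, $\mathcal B_{\nu,0}(B_X)$, $\mathcal B_\mu(B_X)$, $\mathcal B_{\mu,0}(B_X)$ is a Banach space satisfying (e1)--(e2), and the associated spaces $W\mathcal B_\nu(B_X)(Y)$, $W\mathcal B_{\nu,0}(B_X)(Y)$, $W\mathcal B_\mu(B_X)(Y)$, $W\mathcal B_{\mu,0}(B_X)(Y)$ satisfy (we1)--(we3). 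Since $W$ is separating by assumption, part (1) of Theorem~\ref{thm_WO} applies to every admissible pair $(E_1,E_2)$ chosen among these spaces, yielding the exact equivalence ``$W_{\psi,\varphi}$ bounded $\iff$ $\widetilde W_{\psi,\varphi}$ bounded'' for each relevant realization of the operators.

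For part (1), I would apply Theorem~\ref{thm_WO}(1) twice. Taking $(E_1,E_2)=(\mathcal B_\nu(B_X),\mathcal B_\mu(B_X))$ gives that $\widetilde W_{\psi,\varphi}$ is bounded if and only if $W_{\psi,\varphi}\colon \mathcal B_\nu(B_X)\to\mathcal B_\mu(B_X)$ is bounded, i.e.\ if and only if assertion (5) of Theorem~\ref{thm6_1} holds. Taking $(E_1,E_2)=(\mathcal B_{\nu,0}(B_X),\mathcal B_\mu(B_X))$ gives that $\widetilde W^0_{\psi,\varphi}$ is bounded if and only if $W^0_{\psi,\varphi}\colon \mathcal B_{\nu,0}(B_X)\to\mathcal B_\mu(B_X)$ is bounded, i.e.\ if and only if assertion (6) of Theorem~\ref{thm6_1} holds. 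Since Theorem~\ref{thm6_1} already establishes the equivalence of (1)--(6), I can close the loop: $\widetilde W_{\psi,\varphi}$ bounded $\iff$ (5) $\iff$ any one of (1)--(4) $\iff$ (6) $\iff$ $\widetilde W^0_{\psi,\varphi}$ bounded, which is precisely the chain of equivalences claimed in part (1).

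For part (2), I would apply Theorem~\ref{thm_WO}(1) with $(E_1,E_2)=(\mathcal B_{\nu,0}(B_X),\mathcal B_{\mu,0}(B_X))$, obtaining that $\widetilde W^{0,0}_{\psi,\varphi}$ is bounded if and only if $W^{0,0}_{\psi,\varphi}\colon \mathcal B_{\nu,0}(B_X)\to\mathcal B_{\mu,0}(B_X)$ is bounded, which is exactly assertion (5) of Theorem~\ref{thm2_2}. By the equivalence of (1)--(5) in that theorem, this holds precisely when any one of the conditions (1)--(4) of Theorem~\ref{thm2_2} is satisfied, which is the asserted characterization.

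There is no deep obstacle here: once the hypotheses of Theorem~\ref{thm_WO} are checked, the result is a matter of bookkeeping. The only points requiring care are the correct matching of the vector-valued operators to their scalar counterparts---namely confirming that $\widetilde W^0_{\psi,\varphi}$ corresponds to the pair $(\mathcal B_{\nu,0}(B_X),\mathcal B_\mu(B_X))$ and $\widetilde W^{0,0}_{\psi,\varphi}$ to $(\mathcal B_{\nu,0}(B_X),\mathcal B_{\mu,0}(B_X))$ under the identification of each $WE_i(Y)$ with the associated vector-valued Bloch-type space---and verifying that the \emph{little} Bloch-type spaces satisfy (e1), which is immediate since a constant function has vanishing (invariant) derivative and therefore lies in $\mathcal B_{\mu,0}(B_X)$, so that Theorem~\ref{thm_WO} may legitimately be invoked on the target side as well.
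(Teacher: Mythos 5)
Your proposal is correct and follows essentially the same route as the paper, which obtains this theorem precisely by combining the transfer principle of Theorem~\ref{thm_WO}(1) (with the hypotheses supplied by Proposition~\ref{prop_e1e3}) with the scalar characterizations in Theorems~\ref{thm6_1} and~\ref{thm2_2}. Your explicit matching of each vector-valued operator to its scalar counterpart via the pairs $(\mathcal B_\nu,\mathcal B_\mu)$, $(\mathcal B_{\nu,0},\mathcal B_\mu)$, and $(\mathcal B_{\nu,0},\mathcal B_{\mu,0})$ is exactly the bookkeeping the paper leaves implicit.
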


\begin{thm}
	Let   $W \subset Y'$ be a  almost norming subspace. Let $\psi \in H(B_X),$ $\varphi   \in S(B_X)$ and $ \mu, \nu $ be   normal weights on $ B_X. $ Assume that one of the following is satisfied:
\begin{enumerate}[\rm(a)]
	\item $ \int_0^1\frac{dt}{\nu(t)} = \infty $ and  the assertion (1) in Theorem \ref{thm_compact};
	\item $ \int_0^1\frac{dt}{\nu(t)} = \infty $ and  one of the assertions (4)-(6) in Theorem \ref{thm_compact} with  the additional assumption that (\ref{phi_relative_compact}) holds for some $ m\ge2; $
	\item $ \int_0^1\frac{dt}{\nu(t)} < \infty $ and  one of the assertions (1)-(3) in Theorem \ref{thm_compact2} with the additional assumption that (\ref{phi_relative_compact}) holds for some $ m\ge2.$
\end{enumerate}
Then  the  following are equivalent:
\begin{enumerate}[\rm (1)]
	\item $\widetilde{W}_{\psi,\varphi} $ is (resp. weakly) compact;
	\item $\widetilde{W}^0_{\psi,\varphi} $ is (resp. weakly) compact;
	\item The identity map $ I_Y: Y \to Y $ is   (resp. weakly) compact.
	\end{enumerate}
	\end{thm}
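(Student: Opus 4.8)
The plan is to deduce this statement as a direct corollary of Theorem \ref{thm_WO}(2), once we have recognized that each of the alternatives (a), (b), (c) is merely one of the equivalent conditions guaranteeing the compactness of the \emph{scalar} weighted composition operator. First I would record the structural prerequisites. By Proposition \ref{prop_e1e3}, the spaces $\mathcal B_\nu(B_X)$, $\mathcal B_{\nu,0}(B_X)$ and $\mathcal B_\mu(B_X)$ satisfy (e1)--(e2), so Theorem \ref{thm_WO} applies verbatim with $E_2=\mathcal B_\mu(B_X)$ and $E_1\in\{\mathcal B_\nu(B_X),\mathcal B_{\nu,0}(B_X)\}$; moreover the weak associated spaces $W\mathcal B_\nu(B_X)(Y)$, $W\mathcal B_{\nu,0}(B_X)(Y)$ and $W\mathcal B_\mu(B_X)(Y)$ satisfy (we1)--(we3'), the last because $W$ is assumed almost norming. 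These are exactly the ambient hypotheses demanded by Theorem \ref{thm_WO}(2).

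Next I would show that, under any one of (a), (b), (c), both scalar operators $W_{\psi,\varphi}\colon \mathcal B_\nu(B_X)\to\mathcal B_\mu(B_X)$ and $W^0_{\psi,\varphi}\colon \mathcal B_{\nu,0}(B_X)\to\mathcal B_\mu(B_X)$ are compact. If (a) holds, assertion (1) of Theorem \ref{thm_compact} is equivalent to assertions (2) and (3) there, which are exactly the compactness of $W_{\psi,\varphi}$ and of $W^0_{\psi,\varphi}$. If (b) holds, the additional relative-compactness hypothesis (\ref{phi_relative_compact}) places us in part (B) of Theorem \ref{thm_compact}, where any of (4)--(6) is again equivalent to (2) and (3). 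If (c) holds, the same role is played by Theorem \ref{thm_compact2}: its part (A) is available unconditionally and its part (B) under (\ref{phi_relative_compact}), and any of the listed conditions is equivalent to the compactness of $W_{\psi,\varphi}$ and $W^0_{\psi,\varphi}$. Thus in all three cases the scalar compactness hypothesis of Theorem \ref{thm_WO}(2) is met.

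Finally I would invoke Theorem \ref{thm_WO}(2). Since $W$ is almost norming and $W_{\psi,\varphi}$ is compact, part (2)(a) gives that $\widetilde{W}_{\psi,\varphi}$ is compact if and only if $I_Y$ is compact, while part (2)(b) gives that $\widetilde{W}_{\psi,\varphi}$ is weakly compact if and only if $I_Y$ is weakly compact; this yields $(1)\Leftrightarrow(3)$ in both the compact and the weakly compact readings of ``(resp.\ weakly)''. Applying the identical argument to the pair $(\mathcal B_{\nu,0}(B_X),\mathcal B_\mu(B_X))$ and the compact operator $W^0_{\psi,\varphi}$ yields $(2)\Leftrightarrow(3)$. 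Chaining the two equivalences produces the full statement $(1)\Leftrightarrow(2)\Leftrightarrow(3)$.

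The proof requires no new estimate: its entire content is the observation that each of (a), (b), (c) is, by Theorem \ref{thm_compact} or Theorem \ref{thm_compact2}, precisely the compactness of the underlying scalar operator, after which Theorem \ref{thm_WO}(2) carries out the reduction to $I_Y$. Consequently the only point demanding care is the bookkeeping of the correspondence between the scalar operators $W_{\psi,\varphi}$, $W^0_{\psi,\varphi}$ and their weak (tilde) counterparts, together with the verification that the relevant Bloch-type spaces satisfy the standing hypotheses (e1)--(e2); I do not anticipate any genuine analytic obstacle beyond this matching.
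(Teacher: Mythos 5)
Your proposal is correct and follows exactly the route the paper intends: the paper states this theorem as an immediate consequence of ``combining Theorem \ref{thm_WO} with the main results in Section 6,'' and your argument spells out precisely that combination --- Proposition \ref{prop_e1e3} to secure (e1)--(e2) and (we1)--(we3'), Theorems \ref{thm_compact}/\ref{thm_compact2} to convert each of (a), (b), (c) into compactness of the scalar operators $W_{\psi,\varphi}$ and $W^0_{\psi,\varphi}$, and then Theorem \ref{thm_WO}(2) applied to the pairs $(\mathcal B_\nu(B_X),\mathcal B_\mu(B_X))$ and $(\mathcal B_{\nu,0}(B_X),\mathcal B_\mu(B_X))$ to obtain $(1)\Leftrightarrow(3)$ and $(2)\Leftrightarrow(3)$. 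No gap; your bookkeeping matches the paper's implicit proof.
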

\begin{thm}
	Let   $W \subset Y'$ be a  almost norming subspace. Let $\psi \in H(B_X)$ and $\varphi   \in S(B_X).$ 
Let  $ \mu, \nu $ be   normal weights on $ B_X. $ Assume that one of the following is satisfied:
\begin{enumerate}[\rm(a)]
	\item The assertion (2) in Theorem \ref{thm2_2_cp};
	\item One of   the assertions (3)-(5) in Theorem \ref{thm2_2_cp} with the additional assumption that (\ref{phi_relative_compact}) holds for some $ m\ge2.$
\end{enumerate}
Then  the  following are equivalent:
\begin{enumerate}[\rm (1)]
	\item   $\widetilde{W}^{0,0}_{\psi,\varphi} $ is (resp. weakly) compact; 
	\item  The identity map $ I_Y: Y \to Y $ is (resp. weakly) compact.
\end{enumerate}
	\end{thm}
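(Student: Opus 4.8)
The plan is to deduce the statement directly from Theorem \ref{thm_WO}(2), applied to the scalar weighted composition operator $W^{0,0}_{\psi,\varphi}$ acting between the little Bloch-type spaces. The whole argument is an assembly of the earlier results, so the essential work is to verify that the framework of Theorem \ref{thm_WO} genuinely applies and that the stated hypotheses force $W^{0,0}_{\psi,\varphi}$ to be compact.

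First I would set $E_1 := \mathcal B_{\nu,0}(B_X)$ and $E_2 := \mathcal B_{\mu,0}(B_X)$ and recall from Proposition \ref{prop_e1e3} that both spaces satisfy the conditions (e1)-(e2). Consequently the associated spaces $WE_1(Y) = W\mathcal B_{\nu,0}(B_X)(Y)$ and $WE_2(Y) = W\mathcal B_{\mu,0}(B_X)(Y)$ are well defined, and the pair $\big(W^{0,0}_{\psi,\varphi}, \widetilde{W}^{0,0}_{\psi,\varphi}\big)$ is exactly an instance of the pair $\big(W_{\psi,\varphi}, \widetilde{W}_{\psi,\varphi}\big)$ covered by Theorem \ref{thm_WO}.

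Next I would check that both hypotheses (a) and (b) imply that $W^{0,0}_{\psi,\varphi}: E_1 \to E_2$ is compact. If (a) holds this is nothing but assertion (2) of Theorem \ref{thm2_2_cp}. If (b) holds, then the additional relative-compactness condition (\ref{phi_relative_compact}) activates part (B) of Theorem \ref{thm2_2_cp}, in which the assertions (3)--(5) are shown to be equivalent to assertion (2); hence once again $W^{0,0}_{\psi,\varphi}$ is compact. Thus in both situations $W^{0,0}_{\psi,\varphi}$ is a compact operator.

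Finally, since $W$ is almost norming and $W^{0,0}_{\psi,\varphi}$ is compact, I would invoke Theorem \ref{thm_WO}(2): part (a) gives that $\widetilde{W}^{0,0}_{\psi,\varphi}$ is compact if and only if $I_Y$ is compact, and part (b) gives that $\widetilde{W}^{0,0}_{\psi,\varphi}$ is weakly compact if and only if $I_Y$ is weakly compact. Reading these two equivalences in parallel yields precisely the asserted equivalence (1) $\Leftrightarrow$ (2). The only step that needs real attention is the reduction in case (b): one must make sure that (\ref{phi_relative_compact}) is exactly the hypothesis under which part (B) of Theorem \ref{thm2_2_cp} turns (3)--(5) into the compactness of $W^{0,0}_{\psi,\varphi}$, so that Theorem \ref{thm_WO}(2) can be applied uniformly across both cases.
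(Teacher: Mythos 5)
Your proposal is correct and takes essentially the same route as the paper: the paper offers no separate proof for this theorem, presenting it as an immediate consequence of combining Theorem \ref{thm_WO}(2) with Theorem \ref{thm2_2_cp}, which is exactly your argument (hypothesis (a) gives compactness of $W^{0,0}_{\psi,\varphi}$ directly, hypothesis (b) gives it via part (B) of Theorem \ref{thm2_2_cp} under (\ref{phi_relative_compact}), and then Theorem \ref{thm_WO}(2) yields the equivalence with the (weak) compactness of $I_Y$). Your explicit check, via Proposition \ref{prop_e1e3}, that $\mathcal B_{\nu,0}(B_X)$ and $\mathcal B_{\mu,0}(B_X)$ satisfy (e1)--(e2) so that Theorem \ref{thm_WO} genuinely applies is the one detail the paper leaves implicit, and you supply it correctly.
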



\end{document}